\shorttitle{Escape from the boundary}
\def\numberlikeadb{\global\def\theequation{\thesection.\arabic{equation}}}
\newcommand{\halmos}{\rule{1ex}{1.4ex}}
\newcommand{\proofbox}{\hspace*{\fill}\mbox{$\halmos$}}
\newcommand{\pr}{{\mathbb P}}
\newcommand{\reals}{{R}}
\newcommand{\ints}{\mathbb{Z}}
\newcommand{\dtv}{d_{\mbox{\tiny TV}}}
\newcommand{\eqa}{\begin{eqnarray}}
\newcommand{\ena}{\end{eqnarray}}
\newcommand{\eq}{\begin{equation}}
\newcommand{\en}{\end{equation}}
\newcommand{\eqs}{\begin{eqnarray*}}
\newcommand{\ens}{\end{eqnarray*}}
\def\zz{\ints}
\def\a{\alpha}
\def\b{\beta}
\def\g{\gamma}
\def\d{\delta}
\def\D{\Delta}
\def\e{\varepsilon}
\def\h{\eta}
\def\th{\theta}
\def\k{\kappa}
\def\n{\nu}
\def\r{\rho}
\def\s{\sigma}
\def\t{\tau}
\def\f{\varphi}
\def\ch{\chi}
\def\ps{\psi}
\def\nin{\noindent}
\def\Blb{\left\{}
\def\Brb{\right\}}
\def\giv{\,|\,}
\def\Giv{\,\Big|\,}
\def\non{\nonumber}
\def\Eq{\ =\ }
\def\Le{\ \le\ }
\def\Ref#1{{\rm (\ref{#1})}}
\def\bp{\begin{proof}}
\def\ep{\end{proof}}
\def\bone{{\bf 1}}
\def\Bl{\left(}
\def\Br{\right)}
\def\half{{\textstyle{\frac12}}}
\def\ff{{\cal F}}
\def\ignore#1{}
\def\ex{{\mathbb E}}
\def\Z{\ints}
\def\R{\reals}
\def\jj{{\mathcal J}}
\def\sjj{\sum_{J\in \jj}}
\def\uii{^{(i)}}
\def\uN{^{(N)}}
\def\uNt{^{(N,\th)}}
\def\tx{{\tilde x}}
\def\ui{^{(1)}}
\def\uii{^{(i)}}
\def\ut{^{(2)}}
\def\uh{^{(3)}}
\def\Blm{\left|}
\def\Brm{\right|}
\def\JJ{{\mathcal J}}
\def\tm{{\tilde m}}
\def\reals{\mathbb{R}}
\def\tfrac#1#2{{\textstyle{\frac{#1}{#2}}}}
\def\tc{{\tilde c}}
\def\bg{{\bar g}}
\def\sjjt{\sum_{J \in \JJ_2}}
\def\zz{{\cal Z}}
\def\Exp{{\rm Exp\,}}
\def\tG{{\widetilde H}}
\def\hG{{\widehat H}}
\def\txi{{\tilde\xi}}
\def\bu{{\mathbf{u}}}
\def\bv{{\mathbf{v}}}
\def\tod{\to_d}
\def\hxi{{\hat\xi}}
\def\hE{{\widehat E}}
\def\uf{{^{(4)}}}
\def\tD{{\widetilde \D}}
\def\uN{^{(N)}}
\def\uv{^{(5)}}
\def\law{{\cal L}}
\def\etal{{\it et al.\/}}
\def\Def{\ :=\ }
\def\cst{\g}
\def\tk{{\tilde k}}
\def\bk{{\bar k}}
\def\hk{{\hat k}}
\def\KKK{{K}}
\def\hhh{{\h}}
\def\hht{{\hat t}}
\def\bv{\mathbf{v}}
\def\bu{\mathbf{u}}
\def\nn{{\cal N}}
\def\kk{{\cal K}}
\def\kkkc{K_c}
\def\CCC{F}
\def\bP{{\mathbf P}}
\def\DDD{\f}
\begin{document}

\title{Escape from the boundary in Markov population processes}

\authorone[Universit\"at Z\"urich]{A. D. Barbour}%
\authortwo[Monash University]{K. Hamza}%
\authorthree[Technion, Haifa]{Haya Kaspi}
\authortwo[Monash University]{F. C. Klebaner}
\addressone{Institut f\"ur Mathematik, Universit\"at Z\"urich, Winterthurertrasse 190, CH-8057 Z\"urich}
\addresstwo{School of Mathematical Sciences, Monash University, Clayton, VIC 3800, Australia}
\addressthree{Faculty of Industrial Engineering and Management, Technion, Haifa 32000, Israel}

\begin{abstract}
Density dependent Markov population processes in large populations
of size~$N$ were shown by Kurtz (1970, 1971) to be well approximated
over finite time intervals by the solution of the differential equations that
describe their average drift, and to exhibit stochastic fluctuations
about this deterministic solution on the scale~$\sqrt N$ that can be
approximated by a diffusion process.
Here, motivated by an example from evolutionary biology,
we are concerned with describing how such a process leaves an
absorbing boundary.  Initially, one or more of the
populations is of size much smaller than~$N$, and the length
of time taken until all populations have sizes comparable to~$N$
then becomes infinite as $N \to \infty$.
Under suitable assumptions, we show that in the early stages of development,
up to the time when all populations have sizes at least $N^{1-\a}$, for
$1/3 < \a < 1$, the process can be accurately approximated in total variation by a Markov
branching process.  Thereafter, it is well approximated by
the deterministic solution starting from the original initial point,
but with a random time delay.  Analogous behaviour is also established
for a Markov process approaching an equilibrium on a boundary, where
one or more of the populations become extinct.
\end{abstract}

 \noindent
\keywords{Markov population processes; boundary behaviour; branching process}
\ams{92D30}{60J27; 60B12}

\section{Introduction}\label{introduction}
\setcounter{equation}{0}
A continuous time version of the two morphs stage in
the bare bones evolution model of Klebaner \etal\ (2011, Section~3) can be represented as a
pure jump Markov process~$X_N$ on~$\Z_+^2$, with the first component the count
of wild type individuals, initially around their carrying capacity, and the second
the count of mutant individuals.  The transition rates are as follows:
\eq\label{ex-rates}\begin{array}{cclcl}
   X &\to& X + (1,0) \quad &\mbox{at rate}& \quad a_1X_1; \\
   X &\to& X + (-1,0) \quad &\mbox{at rate}& \quad X_1\{(X_1/N) + \g (X_2/N)\}; \\
   X &\to& X + (0,1) \quad &\mbox{at rate}& \quad a_2X_2; \\
   X &\to& X + (0,-1) \quad &\mbox{at rate}& \quad X_2\{\g (X_1/N) + (X_2/N)\}.
   \end{array}
\en
Initially, $X_1$ has a value near its carrying capacity~$Na_1$, and~$X_2 = 0$.  At some time, which
we call~$0$, $Z_0$ mutant individuals are introduced into the population;  $Z_0$
is thought of as fixed, irrespective of the (large) value of~$N$.
The mutants and wild type individuals differ only through their birth rates $a_1$ and~$a_2$.
Each species has {\it per capita\/} death rate given by the density
of its own population, together with an additional component of $\g$ times the density
of individuals of the other species.  If $\g > 1$, members of the other species result
in a higher mortality rate than if they were of the same species; if $\g < 1$, they
result in a lower mortality rate than if they were of the same species, favouring the
possibility of co-existence.  If $a_2 < \g a_1$,
the mutants have negligible chance of survival, but, if $a_2 > \g a_1$, there is
a non-zero probability $p_N(Z_0) \approx 1 - (\g a_1/a_2)^{Z_0}$ that the mutant
strain will become established.  In this case,
if also $a_1 > \g a_2$, the two populations will eventually come to co-exist;  if, instead,
$a_1 < \g a_2$, the wild type population will be driven to extinction. Note that, as
expected, co-existence is impossible if $\g > 1$.
In this paper, we are primarily interested
in describing how the process develops, up to the time at which the
mutants represent a positive fraction of the population, when~$N$ is large.  We also
examine the detail of how the wild type becomes extinct, when $a_1 < \g a_2$.

This process is a particular example of a more general family of processes,
that we now investigate.  We suppose that~$X_N$ is a Markov population
process on $\Z_+^d$ having transition rates
\eq\label{rates}
  X \ \to\ X + J \quad\mbox{at rate}\ Ng^J(N^{-1}X),\qquad X \in \Z_+^d,
\en
for $J \in \jj$, where~$\jj$ is a finite subset of~$\Z^d$.  We assume that the
functions~$g^J$ are continuously differentiable for $x\in \reals_+^d$, and we write
\eq\label{F-def}
  F(x) \Def \sjj Jg^J(x),\qquad x \in \reals_+^d,
\en
to denote the infinitesimal drift of the process~$x_N := N^{-1}X_N$.
Letting $\{P^J: J\in \mathcal{J}\}$ be independent rate 1 Poisson processes,
the evolution of~$x_N$ can be described (Kurtz, 1977) by the equation 
\eqa\label{stochastic}
   x_N(t) &=& x_N(0) + N^{-1}\sjj J P^J(NG_N^J(t)) \non\\
            &=& x_N(0) + \int_0^t F(x_N(u))\,du + m_N(t), \label{0-new}
\ena
where
\eq\label{AB-mN-def}
     m_N(t) \Def \sjj J\{N^{-1}P^J(NG_N^J(t)) - G_N^J(t)\},
\en
and $G_N^J(t) := \int_0^t g^J(x_N(u))du$.
$m_N$ is a well-behaved vector valued martingale.
In differential form, \Ref{stochastic} becomes
\eq\label{stochastic-de}
    dx_N(t) \Eq F(x_N(t)) + dm_N(t),
\en
and the corresponding `deterministic equations',  given by leaving out the martingale
innovations, are
\eq\label{deterministic}
    \frac{d\xi}{dt} \Eq  F(\xi).
\en

Our interest here is in deriving an approximation to the process~$x_N$ in circumstances in which
the initial state is close to~$\bar x$,
an unstable equilibrium point of the equations~\Ref{deterministic}, as in the bare bones
example given above.  In the seminal papers of Kendall~(1956) and Whittle~(1955), written in
the context of Bartlett's~(1949) Markovian SIR epidemic process, a basic description
was proposed.  Such processes should behave much like branching processes near~$\bar x$, as far as
those components in which numbers are small are concerned, and should then look more and more like
solutions to the deterministic equations as the numbers grow.  The deterministic part of the approximation
was established for general Markov population processes
in Kurtz~(1970, Theorem~(3.1)), who showed that, if $\lim_{N\to\infty}x_N(0) = x_0$, then
$\sup_{0\le t\le T}|x_N(t) - \xi(t)| \to 0$ in distribution, for any finite~$T > 0$,
where~$\xi$ satisfies~\Ref{deterministic} with~$\xi(0) = x_0$.  In particular, if~$x_0 = \bar x$,
Kurtz's~(1970) theorem implies that~$x_N(t)$ stays asymptotically close to~$\bar x$ over any fixed finite
time interval.  However, the deterministic solution~$\xi_N$ starting with $x_N(0)$ close to~$\bar x$
may still eventually escape from~$\bar x$, but the time that it takes to do so is
asymptotically infinite as $N\to\infty$, so that Kurtz's~(1970) theorem is not
suitable for describing what eventually happens.  Such outcomes may nonetheless be
of considerable practical importance in applications. The aim of this paper is to
show that the Kendall--Whittle description can indeed be established
in considerable generality, and to give some measure of the accuracy of the resulting
approximation.

Under appropriate conditions, we prove that the process~$x_N$, if it indeed escapes from~$x_0$,
then closely follows the path of the solution to the deterministic equations, but with a random
time shift, and that the time required to escape from~$x_0$ is of order~$O(\log N)$.
This behaviour is exactly what one might expect on the basis of the Kendall--Whittle
description, with the random time shift reflecting the essential randomness that occurs
in the early stages of the branching phase.  However, proving that it is actually the case is
not so easy.  A main difficulty is presented by the asymptotically infinite length of time that
elapses, while the process is escaping from the boundary, since this necessitates good control over the
behaviour of the process over long time intervals.  A related difficulty is to keep control
of the branching approximation for a long enough time to ensure that the subsequent development
is indeed almost deterministic.  Our approach is to establish extremely accurate approximation,
in terms of the total variation distance between the probability distributions of the
two processes, over a very long initial time interval.  Once this has been
achieved, the subsequent development can be well enough described by the
deterministic solution.
 We then go on to prove complementary results, describing the behaviour of a process that
approaches a stable equilibrium point of the deterministic equations at which
some coordinates of the process take the value zero.

\subsection{Assumptions}\label{assumptions}

Our general setting is as follows; the specialization to the bare bones example is given in
Section~\ref{example}.
Denote by~$x\ui$ the first~$d_1$ components
of~$x$ and by~$x\ut$ the remaining $d_2 = d - d_1$ components,
and split $J = (j_1,\ldots, j_{d}) = (J^{(1)},J^{(2)})$ in the same way.
\ignore{
 $J=(j_1,\ldots, j_{d}):=(J^{(1)},J^{(2)})$, $J^{(1)}=(j_1,\ldots, j_{d_1})$,
$J^{(2)}=(j_{d_1+1},\ldots, j_{d_2})$. Let $s(J)$ be one of the components of $J^{(2)}$.
}
For transitions with $J\ut \neq 0$, suppose that the rates are always of the form $g^J(x) = \bg^J(x)x_{s(J)}$,
for some $s(J)$ such that $d_1 < s(J) \le d$, and that $\bg^J(x_0) > 0$;
we also assume that $J_i \ge 0$
for all~$i\neq s(J)$ such that \hbox{$d_1 < i \le d$,} and that $J_{s(J)} \ge -1$.
We denote the set of all such transitions by~$\JJ_2$.  These assumptions are natural in a
population context; in particular, if the constraints on the elements of such~$J$ are
violated, some of the components could become negative. The function~$F$ can now
be written in the form
\eq\label{F-structure}
   F(x) \Eq \left(\begin{array}{c}
                   A(x) \\ B(x)
                  \end{array}\right)  x\ut + \left(\begin{array}{c}
                   c(x\ui) \\ 0
                  \end{array}\right),
\en
where, for each~$x$, $A(x)$ and~$B(x)$ are $d_1\times d_2$ and $d_2\times d_2$ matrices respectively,
and~$c(x\ui)$ is a $d_1$-vector.  Suppose that~$x_0\ui$ is a strongly stable equilibrium of
$d\xi\ui/dt = c(\xi\ui)$ and that $x_0\ut = 0$. Then the solution~$\xi$ of the deterministic
equations starting at~$x_0$ is the constant~$x_0$, and the stochastic system~$x_N$, if started near~$x_0$
with~$x_N\ut(0) = 0$, typically spends an amount of
time that is at least exponential in~$N$ before leaving the vicinity of~$x_0$
(Barbour \& Pollett~2012, Theorem~4.1).  However, if the initial
value $x_N\ut(0)$ is not~$0$, but takes the value $x_{N,0}\ut = N^{-1}Z_0$,
for some $0 \neq Z_0 \in \Z_+^{d_2}$, and if~$B_0 := B(x_0)$ is such that~$\xi_N$,
the solution of~\Ref{deterministic}
starting from this initial condition, leaves the neighbourhood of the boundary,
then~$x_N$ has positive probability of doing so as well.

We shall suppose henceforth that~$x_N(0) = x_{N,0}$ satisfying
$|x_{N,0}\ui - x_0\ui| \le N^{-5/12}$.
Under the equilibrium distribution for~$x_N\ui$ when~$x_N\ut = 0$, typical values of $|x_{N,0}\ui - x_0\ui|$
are of order~$O(N^{-1/2})$, so such a starting condition is reasonable.  Suppose also
that $x_{N,0}\ut = N^{-1}Z_0$.
Our assumptions imply that~$B$ has non-negative off-diagonal entries near~$x_0$;
we assume also that it is irreducible, and that the largest eigenvalue~$\b_0$ of~$B_0$ is positive.
In addition, the elements of the matrices $A$ and~$B$ are assumed to be
continuously differentiable functions of~$x$.
The stability of~$x_0\ui$ is expressed by assuming that the function~$c$ is of the form
\eq\label{c-assn}
   c(w) \Eq C(w - x_0\ui) + \tc(w),\quad w \in \reals_+^{d_1},
\en
where $C$ is a fixed $d_1\times d_1$ matrix such that, for some $\cst_1 < \infty$, 
\eq\label{C-bound}
   |e^{Ct}x| \Le \cst_1 |x|, \quad x \in \reals^{d_1},\ t\ge0,
\en
as is the case if all the eigenvalues of~$C$ have negative real part,
and where, for some $\kkkc ,\r_1 > 0$, and for $w_1,w_2 \in \R_+^{d_1}$
such that $\max_{i=1,2}|w_i-x_0\ui| \le \r_1$,
\eq\label{deriv-bnd}
   |\tc(w_1) - \tc(w_2)| \Le \kkkc |w_1-w_2|\{|w_1-w_2| + \min_{i=1,2}|w_i-x_0\ui|\}.
\en
{}From the Perron--Frobenius theorem, there also exist $0 <\cst_3 < \cst_2 < \infty$ such that
\eq\label{matrix-bnds}
    |e^{B_0t}x| \Le \cst_2 e^{\b_0 t}|x|, \quad x \in \reals^{d_2},\ t \ge 0,
\en
and
\eq\label{matrix-bnds-2}
   |e^{B_0t}x| \ \ge\ \cst_3 e^{\b_0t}|x|,\quad x \in \reals_+^{d_2},\ t \ge 0.
\en
We also choose~$0 < \r_2 \le \r_1$ small enough that
\eq\label{rho-2-cond}
  b_*^J\ :=\ \inf_{|x-x_0| \le \r_2}|\bg^J(x)|\ >\ 0 \
                    \mbox{for all}\ J \in \jj_2.
\en
We denote by~$\|G\|$ the matrix norm $\|G\| := \sup_{y\colon\,|y|=1}\{|Gy|\}$.  For matrix
functions~$G(x)$, we write $\|G\|_{\r} := \sup_{|x-x_0| \le \r}\|G(x)\|$, and 
$$
  \|DG\|_\r \Def \sup_{|x-x_0| \le \r,|x'-x_0| \le \r}\{|x-x'|^{-1}\|B(x) - B(x')\|\}. 
$$
In all the arguments that follow, constants involving the symbol~$k$ are defined solely
in terms of the functions $A$, $B$ and~$c$, and associated constants such as~$\r_2$, and do not vary,
either with~$N$, or with the choices made for the quantities~$\e\uii$, $1\le i\le 4$,
appearing in Lemmas \ref{d-lem-1} and~\ref{d-lem-2}. Constants involving the symbol~$\d$ are
typically to be chosen suitably small, but again only with reference to the functions $A$, $B$
and~$c$, and to associated constants such as~$\r_2$.

\subsection{Main results}\label{results}
Under these assumptions, we carry out a programme indicated in Barbour (1980),
but now in more general circumstances. We first show that the initial behaviour of $Nx_N\ut$
is well approximated by that of a supercritical $d_2$-type Markov
branching process~$Z$,
defined at the beginning of Section~\ref{phase-1},
whose mean growth rate matrix is~$B_0^T$.
Let~$\bu^T$ be the left eigenvector of~$B_0^T$ corresponding to~$\b_0$,
normalized so that $\bu^T\bone = 1$, and let the corresponding right eigenvector be~$\bv$,
normalized so that $\bu^T\bv = 1$. 
Then branching process theory (Athreya \&~Ney 1972, Chapter~V.7, Theorem~2) implies that $Z(t)e^{-\b_0 t}
\to W\bu$ a.s. as $t\to\infty$, where the random variable~$W$ has mean $Z_0^T\bv$ and satisfies $W>0$ on
the set of non-extinction, and in consequence, for as long as this approximation holds,
\eq\label{initial-stoch}
   x_N\ut(t)\ \approx\ N^{-1}e^{\b_0\{t + \b_0^{-1}\log W\}}\bu;
\en
the results that we use are proved in the appendix.
The development of~$\xi_N\ut$, the second group of components of the solution of the deterministic equation, 
also initially parallels that of~$x_N\ut$, in that the
linear approximation to~\Ref{deterministic} near~$x_0$ yields
\eq\label{initial-det}
   \xi_N\ut(t)\ \approx\ e^{B_0 t} N^{-1}Z_0 \ \sim\ N^{-1} e^{\b_0 t} (\bv^T Z_0)\,\bu
    \Eq N^{-1}e^{\b_0\{t + \b_0^{-1}\log (\bv^T Z_0)\}}\bu,
\en
by virtue of the Perron--Frobenius theorem (Seneta~2006, Theorem~2.7).
The quantity~$W$ in~\Ref{initial-stoch} is
replaced in~\Ref{initial-det} by its expectation, so that, apart from the random
time shift $\b_0^{-1}(\log W - \log \ex W)$, the two paths are much the
same.  This simple description of the development of~$x_N$ turns out to be true also 
if {\it all\/} components, and not just those of the second group, are considered;  the 
formal statement of this, together with some estimate of the accuracy of the
approximation, is the main message of Theorem~\ref{combined}.
Note that the approximations \Ref{initial-stoch} and~\Ref{initial-det}
need~$t$ to be large, so that in the first case the branching asymptotics
and in the second the Perron--Frobenius asymptotics give good approximations.
On the other hand, $t$ should not be so large as to invalidate the linearizations around~$x_0$,
implicit in both approximations.
It is the need to satisfy both requirements simultaneously, with sufficient accuracy,
and for large enough values of~$t$, that provides a major source of complication in the proofs.

In Section~\ref{phase-1}, we show that
the branching approximation in fact holds good {\it in total variation\/} up to
a time~$\t_{N,\a}^x$, chosen so that~$N\bv^T x_N\ut(\t_{N,\a}^x)$ is approximately~$N^{1-\a}$,
for any $\a > 1/3$.  As is shown by example in Section~\ref{complements} of the appendix, approximation
in total variation is typically not accurate for $\a \le 1/3$, but it is essential
to the subsequent argument that we can take $\a < 1/2$;  
we take~$\a=5/12$ for the remaining development.
If the branching process is absorbed in~$0$, then
so too, with high probability, is~$x_N\ut$.  If not, then we show that $x_N(\t_{N,\a}^x)$
is close to~$\xi(t_{N,\a}^\xi)$, where $t_{N,\a}^\xi = \b_0^{-1}(1-\a)\log N + O(1)$
is the approximate time~$t$ at which the deterministic
solution~$\xi_N$ starting in~$x_N(0)$ satisfies $\bv^T\xi_N(t) = N^{1-\a}$.
The details are to be found in Proposition~\ref{phase-1-conc}.

In Section~\ref{phase-2}, we show that the deterministic and stochastic paths $\txi_N$ and~$\tx_N$,
both starting at~$x_N(\t_{N,5/12}^x)$, and with time argument re-starting at~$0$,
stay asymptotically close for large~$N$ until an
elapsed time $t_N(\d)$, at which $\bone^T\txi_N$
first attains the value~$\d$, for a small but fixed $\d > 0$; note that
$t_N(\d)=  \b_0^{-1}\a\log N + O(1)$.
The details are given in Proposition~\ref{phase-2-conc}; the fact that $\a < 1/2$ is
needed to maintain the accuracy of approximation up to times at which the 
second components of the paths have attained asymptotically non-negligible size. From this
point onwards, Kurtz's~(1970) theorem, together with the Lipschitz continuity of the
solutions of the deterministic equations with respect to their initial conditions,
can be used to justify the further deterministic
approximation to~$x_N$,
as long as the deterministic curve remains within some fixed,
compact subset of $\reals_+^d$.
Thus~$x_N$ closely follows the deterministic path, but at a
random rate, with the randomness quickly settling down to a fixed time shift
of order~$O(1)$.  The combined theorem is as follows; the parts not justified
by theorems in Kurtz~(1970, 1977) are proved in the following sections.
For the statement of the theorem, we make the following general definitions:
\eqa
   \t^Z(0) &:=& \inf\{t>0\colon\, Z(t)=0\};\quad \t_N^{x}(0) \Def \inf\{t>0\colon\, x_N\ut(t)=0\};
                      \phantom{XXX} \label{AB-tau-0-defs}\\
   \t_{N,\a}^Z &:=& \inf\{t\colon\,\bv^T Z(t) \ge N^{1-\a} + \bv^T Z_0\}; \label{AB-tau-Z-def}\\
   \t_{N,\a}^{x} &:=& \inf\{t\colon\,\bv^T Nx_N\ut(t) \ge N^{1-\a} + \bv^T Z_0\}; \label{AB-tau-x-def} \\
   t_{N,\a}^\xi &:=&  \b_0^{-1}\{(1-\a)\log N - \log(\bv^T Z_0)\}, \label{AB-t-xi-def}
\ena
with the infimum of the empty set taken equal to infinity, and, for the particular choice
$\a = 5/12$, we define
\eq\label{AB-tau-star-defs}
   \t_{N*}^Z \Def \t_{N,5/12}^Z;\quad \t_{N*}^x \Def \t_{N,5/12}^x;\quad t_{N*}^\xi \Def t_{N,5/12}^\xi.
\en
For the Markov branching process~$Z$, defined at the beginning of Section~\ref{phase-1}, we
set $W := \lim_{t\to\infty}\bv^T Z(t)e^{-\b_0 t}$.

\begin{thm}\label{combined}
With the assumptions and definitions of Section~\ref{assumptions},
suppose that $|x_N\ui(0) - x_0\ui| \le N^{-5/12}$ and that $x_N\ut(0) = N^{-1}Z_0$ for
fixed $0 \neq Z_0 \in \Z_+^{d_2}$. Then, except on an event~$E_{N1}^c$ of asymptotically 
negligible probability, the paths of~$Nx_N\ut$ and of~$Z$ can
be coupled so as to be identical until the time~$\min\{\t^Z(0),\t_{N*}^Z\},$
in which case $\t_{N*}^Z = \t_{N*}^x = \b_0^{-1}\{\tfrac7{12}\log N - \log W\} + O(N^{-7/48})$.

Let~$\kk$ be any fixed compact subset of~$\reals_+^d$. Suppose that~$T$ is such
that $\xi_N(t_{N*}^\xi + t) \in \kk$ for all $0 \le t \le T$, where~$\xi_N$ denotes the
solution to the deterministic equation starting with $\xi_N(0) = x_N(0)$.
Then there exists a constant $\g > 0$,  a constant $k_T < \infty$ and an event~$E_{N2}^T$ such that,
on $\{\t_{N*}^x < \infty\}\cap E_{N1}\cap E_{N2}^T$,
\eq\label{deterministic-approximation-final}
   \sup_{0\le t\le \tfrac5{12}\b_0^{-1}\log N + T}|x_N(\t_{N*}^x + t) - \xi_N(t_{N*}^\xi + t)| \Le k_T  N^{-\g},
\en
and $\lim_{N\to\infty} \pr[E_{N2}^T \giv \{\t_{N1}^Z < \infty\}\cap E_{N1}] = 1$.
\end{thm}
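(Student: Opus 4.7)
The proof combines the two main technical Propositions~\ref{phase-1-conc} and~\ref{phase-2-conc} with Kurtz's~(1970) approximation theorem and Lipschitz bounds on the deterministic flow, and separates naturally into the two assertions.

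\textbf{First assertion (branching coupling).} I would take $E_{N1}$ to be the event on which the total-variation coupling of the process $Nx_N\ut$ with the Markov branching process $Z$, established in Proposition~\ref{phase-1-conc}, actually succeeds up to the common stopping time $\min\{\t^Z(0),\t_{N*}^Z\}$. On this event the two paths are identical in that window, and the definitions \Ref{AB-tau-Z-def} and \Ref{AB-tau-x-def} immediately yield $\t_{N*}^Z=\t_{N*}^x$. For the timing estimate I would invoke the branching asymptotics $\bv^T Z(t)e^{-\b_0 t}\to W$ a.s. together with the $L^2$/martingale rate of convergence $\bv^T Z(t)e^{-\b_0 t}-W = O(e^{-\b_0 t/2})$, which is standard and can be quoted from the appendix. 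Inverting the definition of $\t_{N*}^Z$, so that $\bv^T Z(\t_{N*}^Z)=N^{7/12}+\bv^T Z_0$, yields $\b_0\t_{N*}^Z = \tfrac{7}{12}\log N - \log W + O(N^{-\g'})$ for a suitable $\g'>0$, whose specific value $7/48$ comes from combining the martingale fluctuation of $\bv^T Z(t)e^{-\b_0 t}$ at time $\t_{N*}^Z$ with the total-variation accuracy carried out of Phase~1.

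\textbf{Second assertion (deterministic approximation).} I would split the interval $[0,\tfrac{5}{12}\b_0^{-1}\log N+T]$ at $t_N(\d)$, where $\d>0$ is fixed small enough that $\xi_N(t_{N*}^\xi+t_N(\d))$ lies inside the compact set $\kk$ and bounded away from the boundary; by the definition of $t_N(\d)$ one has $t_N(\d)=\tfrac{5}{12}\b_0^{-1}\log N + O(1)$, so the second sub-interval $[t_N(\d),\tfrac{5}{12}\b_0^{-1}\log N+T]$ has bounded length. On the first sub-interval, Proposition~\ref{phase-2-conc} directly delivers an approximation of size $O(N^{-\g''})$ between $\tx_N(\cdot)$ and $\txi_N(\cdot)$ on an event of high probability conditional on $\{\t_{N*}^x<\infty\}\cap E_{N1}$. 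On the second sub-interval, both $\tx_N$ and the deterministic path live inside $\kk$, so I would apply Kurtz's~(1970) theorem — in its quantitative form obtained by Gronwall's lemma applied to the martingale $m_N$ of~\Ref{AB-mN-def}, whose quadratic variation on bounded time intervals in $\kk$ is $O(N^{-1})$ — to compare $x_N$ starting at $x_N(\t_{N*}^x+t_N(\d))$ with the deterministic solution $\tilde\xi$ started at the same random point. Lipschitz continuity of the flow of \Ref{deterministic} on $\kk$, combined over a fixed length $T$ with Lipschitz constant $L=L(\kk,T)<\infty$, then translates the $O(N^{-\g''})$ initial discrepancy into an $e^{LT}O(N^{-\g''})$ terminal discrepancy. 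Setting $E_{N2}^T$ equal to the intersection of the Phase~2 good event with the event that Kurtz's bound holds, and choosing $0<\g<\g''$, gives~\Ref{deterministic-approximation-final}.

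\textbf{Main obstacle.} The delicate step is the splicing at $t_N(\d)$: Proposition~\ref{phase-2-conc} provides a polynomial error at the splice point, and this rate must be preserved across the additional $O(T)$ interval. A qualitative version of Kurtz's~(1970) theorem only gives $o_{\pr}(1)$ fluctuations, which would swamp the polynomial rate inherited from Phase~2. The remedy is to use a quantitative diffusion-scale bound $O(N^{-1/2}\sqrt{\log N})$ on $|x_N-\tilde\xi|$ over bounded time intervals in $\kk$, available from the functional CLT for the martingale $m_N$, and to choose $\g$ small enough that both contributions are absorbed into a single $k_T N^{-\g}$ bound. Control of the Phase~1 probability that $Z$ survives but fails to reach $N^{1-\a}$, uniformly in $N$, is also required so that the conditioning $\lim_{N\to\infty}\pr[E_{N2}^T\mid\{\t_{N*}^Z<\infty\}\cap E_{N1}]=1$ holds; this uses the extinction dichotomy for the supercritical branching process together with the probability bounds already present in Proposition~\ref{phase-1-conc}.
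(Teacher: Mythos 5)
Your proposal is correct and follows essentially the same route as the paper: the first assertion is exactly the content of Proposition~\ref{phase-1-conc} (with the $O(N^{-7/48})$ term coming from the martingale rate $N^{-(1-\a)/4}$ at $\a=5/12$), and the second is obtained by splicing Proposition~\ref{phase-2-conc} on $[0,t_0(\d',\e_N)]$ with a quantitative Kurtz/Gronwall bound plus Lipschitz dependence on initial conditions over the remaining bounded interval, using Lemma~\ref{non-degenerate} to ensure the path is non-degenerate there. Your observation that a merely qualitative form of Kurtz's theorem would not preserve the polynomial rate is a fair and correct refinement of the paper's brief remark on this step.
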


\nin
The proof of the branching approximation is given in Section~\ref{phase-1}, and its
content summarized in Proposition~\ref{phase-1-conc}. The proof of the subsequent deterministic
approximation, up to a time at which~$x_N$ is away from the boundary, is given
in Section~\ref{phase-2}, and its content summarized in Proposition~\ref{phase-2-conc}.  The
extension to further choices of~$T$ follows from Kurtz~(1970, Theorem~(3.1)), and approximation
is then by a non-degenerate path.
There is no universal choice possible for the exponent~$\g$ appearing
in~\Ref{deterministic-approximation-final},
which is a reflection of the greater delicacy required for the approximations derived here than
in the setting of Kurtz~(1970), when any $\g < 1/2$ would satisfy;  we give an example to
illustrate this in Section~\ref{complements} of the appendix.

\medskip
Theorem~\ref{combined} can be interpreted
in the sense that, to a first approximation, the random process~$x_N$ follows the deterministic
curve starting at the same point, but with a random delay of
$\t_{N*}^x - t_{N*}^\xi \sim \b_0^{-1}\{\log(\bv^T Z_0) - \log W\}$.
The initial
condition for~$Z_0$ could be allowed to depend on~$N$, in which case the distribution of~$W$
would depend on~$N$, too:  if $|Z_0\uN| \to \infty$, then $\log(\bv^T Z_0\uN) - \log W\uN
\tod 0$, so that, to this level of approximation, the initial randomness would disappear.

\ignore{
The supremum in \Ref{deterministic-approximation-final} can of course be extended to cover
negative $t > -\min\{\t_{N*}^x,t_{N*}^\xi\}$ also, because~$x_N\ut(u)$ is uniformly close to~$0$
for $u < \t_{N*}^x$, and~$x_N\ui(u)$
is uniformly close to~$x_0\ui$, and similar statements are true of the deterministic approximations.
However, the branching approximation gives much more precise information about~$x_N\ut$
for such times.
}

\subsection{Absorption}\label{absorption}
Our motivating example actually contains two periods in which the process is close to
a boundary, the second being when the wild type becomes extinct.  The setting is then
almost exactly as in Section~\ref{assumptions},
except for the fact that the deterministic solution converges to zero
in some of its coordinates, instead of moving away from zero. 
In the notation of Section~\ref{assumptions}, this corresponds to having the largest real part 
among the eigenvalues of~$B_0$ being negative;  we denote it by~$-\b_1$.  In this setting, we also assume
that the eigenvalues of~$C$ all have negative real parts.

Under these modified assumptions,  we consider stochastic and
deterministic processes $\txi_\d$ and~$x_{N,\d}$ that are
started close to one another, as is implied by the previous results, at a point where they
are reasonably close to the stable equilibrium~$x_0$. To be more precise, we first suppose that
$|x_{N,\d}(0) - x_0| \le \d$, and that $\xi_\d$ is the solution to the deterministic equations
with $\xi_\d(0) = x_{N,\d}(0)$.  We then show that, for~$\d$ chosen small
enough, the two processes
remain close for a further time $t_N(\d) := \b_1^{-1}(\log\d + \tfrac5{12}\log N)$, at which point
the second group of coordinates, those that are converging to zero,
are of magnitude approximately~$N^{-5/12}$, 
and the first coordinates are at a similar distance from~$x_0\ui$.  We also show that, 
if $|\xi_\d(0) - \txi_\d(0)| = O(N^{-\g})$ for some $\g > 0$, then, for~$\d$ chosen small enough,
$|\xi_\d\ui(t_N(\d)) - \txi_\d\ui(t_N(\d))| = O(N^{-\g-\e})$ for some $\e>0$, and
$N^{5/12}|\xi_\d\ut(t_N(\d)) - \txi_\d\ut(t_N(\d))| = O(N^{-\g/2})$.  After this time, the
process~$(Nx_N\ut(t_N(\d) + t),\,t\ge0)$
is well approximated by a branching process~$Z$ in total variation, with rates as before.
The following theorem summarizes these results; the proofs are given in Section~\ref{phase-3}
in the appendix.

\begin{thm}\label{combined-2}
Suppose that the assumptions of Section~\ref{assumptions} hold, with the above modifications. 
Then there exist $\d > 0$ and an event~$E_N$, whose complement
has asymptotically negligible probability, such that, on~$E_N$, if $|x_{N,\d}(0) - x_0| \le \d$,
and if $|x_{N,\d}(0) - \txi_{N,\d}(0)| = O(N^{-\g_1})$ for some~$\g_1 > 0$, then
\eqs
    \sup_{0\le t\le t_N(\d)}|x_{N,\d}\ui(t) - \txi_{N,\d}\ui(t)| &\le& \tk\ui  N^{-\g};\\
   \sup_{0\le t\le t_N(\d)}\{e^{\b_1t}|x_{N,\d}\ut(t) - \txi_\d\ut(t)|\}
                 &\le& \tk\ut  N^{-\g},
\ens
with $t_N(\d) := \max\{\b_1^{-1}(\log\d + \tfrac5{12}\log N),0\}$ and
for suitable $\tk\ui,\tk\ut$ and $\g > 0$.  After~$t_N(\d)$, the process
$Nx_{N,\d}\ut(t_N(\d) + \cdot)$ can be coupled to be identical until extinction to the
(now subcritical) Markov branching process~$Z$, except on an event of asymptotically
negligible probability.  In particular, for a suitable constant~$h^*$,
the time~$t_N(\d) + T_N$ at which~$x_N\ut$ is absorbed in~$0$ is such that
$ \law(\b_1 T_N - \log N - \log(\bv^T\txi_{N,\d}(t_N(\d))) - \log(h^*))$
converges in total variation as $N\to\infty$ to a Gumbel distribution.
\end{thm}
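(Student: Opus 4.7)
My plan runs the three phases of Theorem~\ref{combined} in reverse order, exploiting that the new hypothesis on~$B_0$ (largest eigenvalue real part~$-\b_1<0$) is the negation of the earlier supercriticality, so that~$e^{B_0 t}$ becomes contracting on~$\reals_+^{d_2}$: (i)~a deterministic approximation on~$[0,t_N(\d)]$ using the contracting linearizations of~$C$ and~$B_0$; (ii)~a total variation coupling of $Nx_{N,\d}\ut(t_N(\d)+\cdot)$ with a subcritical Markov branching process~$Z$, the symmetric counterpart of the coupling of Section~\ref{phase-1}; and (iii)~extraction of the Gumbel limit from classical Yaglom-type asymptotics for subcritical multitype branching processes.

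For step~(i), I subtract the integral form of~\Ref{deterministic} from~\Ref{stochastic} and linearize~$F$ about~$x_0$ using~\Ref{F-structure} and~\Ref{c-assn}. Variation of constants with~$e^{Ct}$ and~$e^{B_0 t}$, combined with~\Ref{C-bound} and the analogue of~\Ref{matrix-bnds} with~$-\b_1$ in place of~$\b_0$, produces estimates of the schematic form
\begin{equation*}
   e^{\b\uii t}|x_{N,\d}\uii(t)-\txi_{N,\d}\uii(t)| \Le \cst|x_{N,\d}(0)-\txi_{N,\d}(0)| + \cst M_N\uii(t) + R_N\uii(t),
\end{equation*}
for $i=1,2$, where $\b\ui$ is a positive decay rate inherited from~$C$, $\b\ut=\b_1$, $M_N\uii$ is the supremum up to time~$t$ of~$m_N$ convolved with the contracting semigroup, and $R_N\uii$ is a nonlinear remainder quadratic in deviations, bounded via~\Ref{deriv-bnd}. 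Doob's $L^2$ inequality applied to the convolved martingale, together with the $O(N^{-1})$ quadratic variation of~$m_N$ inherited from the~$g^J$, gives $M_N\uii$ of order~$N^{-1/2+o(1)}$ on~$[0,t_N(\d)]$ with high probability, and a Gronwall bootstrap absorbs~$R_N\uii$ and delivers the two claimed bounds for a suitable~$\g>0$. The event~$E_N$ is the intersection of the martingale maximal inequalities succeeding on~$[0,t_N(\d)]$ with $x_{N,\d}$ remaining in the neighbourhood of~$x_0$ where the linearization is valid.

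For step~(ii), at time~$t_N(\d)$ we have $Nx_{N,\d}\ut(t_N(\d))$ of order~$N^{7/12}$ and $|x_{N,\d}\ui-x_0\ui|$ of order~$N^{-5/12}$, precisely the symmetric counterpart of the data at~$\t_{N*}^x$ in the escape problem. Constructing $x_N$ and~$Z$ from a common family of Poisson clocks via thinning, and applying the same total variation bookkeeping as in Section~\ref{phase-1} but now run with the population decaying geometrically at rate~$\b_1$, I would show that the rate discrepancies $|\bg^J(x_{N,\d}(t))-\bg^J(x_0)|\cdot(Nx_{N,\d}\ut(t))_{s(J)-d_1}$ accumulate (in Hellinger or relative-entropy sense) to a vanishing total; the step~(i) estimate $|x_{N,\d}-x_0|=O(N^{-5/12})$ and the mean exponential decay of the subcritical~$Z$ make the exponent~$\a=5/12$ play exactly the critical role it plays in Proposition~\ref{phase-1-conc}. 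The main obstacle of the whole theorem lies here: controlling the cumulative discrepancy over a time window of length~$O(\log N)$ with~$O(N^{7/12})$ branching events, which forces one to interleave the deterministic bounds of step~(i) with the branching bounds in a self-consistent bootstrap. Once the coupling is in force, step~(iii) is classical: the subcritical Yaglom asymptotics recalled in the appendix give per-particle survival probability $\pr[\t^Z(0)>t]\sim h^*e^{-\b_1 t}$, so that for Perron-weighted initial count $k_N:=N\bv^T\txi_{N,\d}(t_N(\d))\to\infty$, independence of the progeny of the~$k_N$ initial particles yields $\pr[\t^Z(0)\le t]=(1-p_t)^{k_N}$ with $p_t\sim h^*e^{-\b_1 t}$, whence $\pr[\b_1 T_N-\log k_N-\log h^*\le x]\to\exp(-e^{-x})$; convergence in total variation is preserved through the coupling of step~(ii).
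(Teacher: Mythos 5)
Your three-phase plan --- Gronwall/variation-of-constants control of $|x_{N,\d}-\txi_{N,\d}|$ up to $t_N(\d)$ using the contracting semigroups of $C$ and $B_0$, a total-variation branching coupling over the $O(N^{7/12})$ remaining jumps exactly as in Section~\ref{phase-1}, and the Gumbel limit from subcritical extinction asymptotics with Perron-weighted initial counts --- is precisely the structure of the paper's proof in Section~\ref{phase-3} (Lemma~\ref{phase-2-lem-new*}, the transfer via Lemma~\ref{d-lem-2}, and the appeal to Heinzmann~(2009)). The only cosmetic differences are that the paper splits the comparison into stochastic-versus-$\xi_\d$ plus a purely deterministic perturbation lemma, and uses Chernoff bounds rather than Doob's $L^2$ inequality for the martingale; your route is essentially the same.
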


\medskip
The approximation given by Theorem~\ref{combined-2} shows
that, to a first approximation, the random process~$x_N$ follows the deterministic
curve starting at the same point until the time $t_N(\d) = \b_1^{-1}(\log\d + \tfrac5{12}\log N)$.
The law of large numbers for~$Z$ starting at $Nx_{N,\d}\ut(t_N(\d))$ then shows that
the same is true afterwards, though since, for such times, $x_{N,\d}\ut$ is uniformly small
and $x_N\ui(t)$ is close to~$x_0$, the conclusion is of little interest.  However, the
branching approximation delivers more detailed information.  In particular, the time
taken by the deterministic solution~$\txi_{N,\d}$ from $t_N(\d)$ until
$\hat t_N := \inf\{t > 0\colon\, \bv^T\txi_{N,\d}(t) = N^{-1}\}$ is such that
\[
    \hat t_N \ \sim\  \b_1^{-1}\{\log N + \log(\bv^T\txi_{N,\d}(t_N(\d)))\}.
\]
The deterministic solution itself never reaches $\txi_{N,\d}\ut = 0$ in finite time,
but~$\hat t_N$ is the sort of approximation that might be made for the time to extinction,
based on deterministic considerations.  Theorem~\ref{combined-2} shows that this is
reasonable, but that the duration in the stochastic model has an additional random component
$\b_1^{-1}\{G + \log(h^*)\}$.

\subsection{The bare bones example}\label{example}
These results can all be applied to the bare bones example discussed earlier, which
is of the form proposed in Section~\ref{assumptions}, with $d_1 = d_2 = 1$.  
In the initial stages, the matrices $A(x)$ and~$B(x)$
are the scalars $-\g x_1$ and \hbox{$(a_2 - \g x_1 - x_2)$,} and the function
$c(x_1) = -a_1(x_1 - a_1) - (x_1-a_1)^2$, so that we have \hbox{$C = -a_1 < 0$} and
\hbox{$\tc(x_1) = - (x_1-a_1)^2$.}  Assuming that $a_2 > \g a_1$, the unstable equilibrium of the
deterministic equations is $x_0 = (a_1,0)^T$, and $\b_0 = B(x_0) = a_2 - \g a_1 > 0$.
The set~$\jj_2$ consists of the transitions $\{(0,1),(0,-1)\}$, and $s(J) = 2$ for both
of them; the corresponding functions~$\bg^J$ are $a_2$ and $(\g x_1 + x_2)$ respectively.
The process~$Z$ is a linear birth and death process,
with {\it per capita\/} birth and death rates $a_2$ and~$\g a_1$ respectively, and
its behaviour is well understood.  In particular, the limiting random variable~$W$,
conditional on the event of non-extinction, has a Gamma distribution ${\rm Ga\,}(Z_0,1)$.
Hence, if~$Z_0=1$, the delay in following the deterministic curve, given in general by
\[
     \t_{N*}^x - t_{N*}^\xi\ \sim\ \b_0^{-1}\{\log(\bv^T Z_0) - \log W\},
\]
has the distribution of $\{a_2 - \g a_1\}^{-1}G_1$, where~$G_1$ has a
Gumbel distribution.

For the latter stages of the example, in the case when $a_1 < \g a_2$,
the wild type individuals eventually die out.  To match the formulation
in Section~\ref{absorption},
it is necessary to swap the meaning of the coordinates,
so that the second coordinate now represents the remaining numbers of wild type individuals.
The matrices $A(x)$ and~$B(x)$
become the scalars $-\g x_2$ and \hbox{$(a_1 - \g x_2 - x_1)$,} and the function
$c(x_1)$ is given by $-a_2(x_1 - a_2) - (x_1-a_2)^2$, so that we obtain $C = -a_2 = -\k$ and
\hbox{$\tc(x_1) = - (x_1-a_2)^2$.}    The strongly stable equilibrium of the
deterministic equations with the mutants established is given by
$x_0 = (a_2,0)^T$, and $-\b_1 = B(x_0) = a_1 - \g a_2 < 0$.
The set~$\jj_2$ consists as before of the transitions $\{(0,1),(0,-1)\}$, and $s(J) = 2$ for both
of them; the corresponding
functions~$\bg^J$ are $a_1$ and $(\g x_1 + x_2)$ respectively.
The branching process~$Z$ is again a linear birth and death process,
with {\it per capita\/} birth and death rates $a_1$ and~$\g a_2$ respectively.
For this process, the constant~$h^*$ appearing in the final approximation
can be evaluated, using the definition in Heinzmann~(2009, p.299), as
$1 - a_1/(\g a_2)$.  Combining this with the above, we can deduce that the
asymptotics of the entire time from the introduction of a single mutant until the
extinction of the wild type individuals is given by
\[
     \{a_2 - \g a_1\}^{-1}G_1 + \{\g a_2 - a_1\}^{-1}\{\log(1 - a_1/(\g a_2)) + G_2\}
     + T\uN,
\]
where~$T\uN = (\{a_2 - \g a_1\}^{-1} + \{\g a_2 - a_1\}^{-1})\log N + O(1)$ is the time taken
for the deterministic curve to get from the initial state, where the proportion of
mutants is~$N^{-1}$, to the state in which the proportion of wild type individuals is~$N^{-1}$;
and $G_1$ and~$G_2$ are independent Gumbel random variables.  The duration of the
closed stochastic epidemic, studied in Barbour~(1975), could also be approached in
a similar way.  In that example, however, the function~$c$ is identically
zero, so that the final stages have to be treated differently.

\section{The deterministic solutions}\label{deterministic-section}
\setcounter{equation}{0}
For use in our arguments, we collect some properties of the solutions to the
deterministic equations in the neighbourhood of the initial point, deferring the
proofs of the lemmas to the appendix.  We first
use variation of constants to rewrite the equations in the form
\eqa
    \xi\ui(t) &=& \xi\ui(0) + \int_0^t \{A(\xi(u)) \xi\ut(u) + c(\xi\ui(u))\}\,du ; \label{3-d}\\
        &=& x_0\ui + e^{Ct}(\xi\ui(0) - x_0\ui) \non\\
      &&\quad\mbox{} +
       \int_0^t e^{C(t-u)}\{A(\xi(u)) \xi\ut(u) + \tc(\xi\ui(u))\}\,du ;
                  \phantom{XX} \label{3a-d}\\
         \xi\ut(t) &=& \xi\ut(0) + \int_0^t B(\xi(u)) \xi\ut(u)\,du \non\\
       &=& e^{B_0t}\xi\ut(0) + \int_0^t e^{B_0(t-u)} \{B(\xi(u)) - B_0\} \xi\ut(u)\,du. \label{4-d}
\ena
We recall that, in the arguments that follow, constants involving the symbol~$k$
do not vary with the choices made
for the quantities~$\e\uii$, $1\le i\le 4$.  In our applications, these
quantities become small, as~$N$ increases, as negative powers of~$N$, and the assumptions
made about them in the lemmas are automatically satisfied for all~$N$ sufficiently
large.  For use in what follows, define
\eq\label{t-delta-eps-def}
    t_0(\d,\e) \Def \b_0^{-1}\log(\d/\e);\qquad t_1(\d,\e) \Def \b_1^{-1}\log(\d/\e),
\en
for $\d \ge \e > 0$, where~$\b_0$ is as in Section~\ref{assumptions} and~$\b_1$ is
as in Section~\ref{absorption}.

\begin{lem}\label{d-lem-1}
Under the assumptions of Section~\ref{assumptions}, there exists a~$\d_0$ with $0 < \d_0 \le 1$,
depending only on the functions $A,B$ and~$c$ and associated constants such as~$\r_2$, with
the following properties.
If~$\xi$ satisfies the equations \Ref{3a-d} and~\Ref{4-d}, with initial condition such
that $|\xi\ui(0) - x_0\ui| \le \e\ui$ and $|\xi\ut(0)| = \e\ut$, and if 
\eq\label{e-i-bnd}
     4\cst_1\e\ui \Le  \min\{1,(\r_2/4)\}\quad\mbox{and}\quad \e\ut \Le \d_0,
\en
and if also 
\eq\label{e-e-bnd}
     \e\ui\log(1/\e\ut) \Le \min\{1, \b_0/(24\cst_2\cst_1 \|DB\|_{\r_2}), \b_0/(32\kkkc \cst_1^2)\} ,
\en
then, for all~$0 \le t \le t_0(\d_0,\e\ut)$,
\eqa
  \sup_{0\le u\le t}|\xi\ui(u) - x_0\ui| &\le& k\ui\{\e\ui + \e\ut e^{\b_0 t}\}; \non \\
   \sup_{0\le u\le t} e^{-\b_0 u}|\xi\ut(u)| &\le& k\ut\e\ut ;\non \\
   \sup_{0\le u\le t} e^{-\b_0 u}|\xi\ut(u) - e^{B_0u}\xi\ut(0)|
         &\le& k\uh\e\ut\{\e\ui\log(1/\e\ut) + \e\ut e^{\b_0t}\},
       \non
\ena
for suitable $k\ui,k\ut,k\uh$.  Furthermore, if~$\txi$ satisfies \Ref{3a-d} and~\Ref{4-d} with
initial condition~$\txi(0)$ satisfying $|\txi\ut(0) - \xi\ut(0)| \le \e\uh \le k\uf(\e\ut)^{1+\g}$
and $|\txi\ui(0) - \xi\ui(0)| \le  k\uv\e\ut\log(1/\e\ut)$,
for some $k\uf,k\uv > 0$ and $0<\g<1$, then there exist $k^{(6)},k^{(7)}, k^{(8)}$ and~$0 < \d_1 \le \d_0$
such that, for all~$\e\ut \le \min\{k^{(8)},\d_1\}$,
\eqa
    \sup_{0\le u\le t_0(\d_1,\e\ut)}|\xi\ui(u) - \txi\ui(u)|  &\le& k^{(6)}(\e\ut)^{\g/2};
                    \non\\
    \sup_{0\le u\le t_0(\d_1,\e\ut)}\{e^{-\b_0 u}|\xi\ut(u) - \txi\ut(u)|\}  &\le& k^{(7)} (\e\ut)^{1+\g/2}.
                    \non
\ena
Here, $\d_1$ may depend on the choice of~$\g$, as well as on the functions $A,B$ and~$c$.
\end{lem}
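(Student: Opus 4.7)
\bigskip
\noindent\emph{Proof plan.}
The natural approach is to iterate the integral representations \Ref{3a-d} and~\Ref{4-d} together with the spectral bounds \Ref{C-bound} and~\Ref{matrix-bnds}, running a bootstrap (continuity) argument whose ansatz is exactly the bound to be proved. The delicate point throughout is that all estimates must be propagated across the long interval $[0,t_0(\d_0,\e\ut)]$, on which $e^{\b_0 t}$ grows from~$1$ to $\d_0/\e\ut$, so that products of the form $\e\ut e^{\b_0 t}$ remain bounded by~$\d_0$, while $\e\ui t$ remains bounded by a multiple of~$\e\ui\log(1/\e\ut)$, which is small by~\Ref{e-e-bnd}. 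The constants $k\ui,k\ut,k\uh$ are chosen large first, $\d_0$ small depending on them, and then~\Ref{e-i-bnd} and~\Ref{e-e-bnd} absorb the remaining dependence.

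For Part I, let $T^\ast$ be the largest $t\le t_0(\d_0,\e\ut)$ at which the first two bounds hold with $k\ui,k\ut$ to be chosen. On $[0,T^\ast]$ we have $|\xi(u)-x_0|\le k\ui\e\ui+(k\ui+k\ut)\e\ut e^{\b_0 u}$ and $|\xi(u)-x_0|\le \r_2$ (by \Ref{e-i-bnd} together with $\e\ut e^{\b_0 u}\le\d_0$). Substituting into~\Ref{4-d} and using~\Ref{matrix-bnds} yields
$$
  e^{-\b_0 t}|\xi\ut(t)|\Le \cst_2\e\ut+\cst_2\|DB\|_{\r_2}k\ut\e\ut\int_0^t|\xi(u)-x_0|\,du,
$$
and the remaining integral is bounded by a constant multiple of $\e\ui\log(1/\e\ut)+\d_0$, which can be made $\le(k\ut-\cst_2)/(\cst_2\|DB\|_{\r_2}k\ut)$ by \Ref{e-e-bnd} and the choice of~$\d_0$; this improves the bound and closes the bootstrap. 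Substituting into~\Ref{3a-d}, using~\Ref{C-bound}, the Lipschitz continuity of~$A$, and the quadratic bound~\Ref{deriv-bnd} applied to $\tc(\xi\ui(u))=\tc(\xi\ui(u))-\tc(x_0\ui)$, yields in the same manner the bound $|\xi\ui(t)-x_0\ui|\le\{\cst_1+O(\e\ui\log(1/\e\ut))\}\e\ui+O(\d_0+1)\e\ut e^{\b_0 t}$, which improves the $k\ui$-ansatz once $k\ui$ is large and~$\d_0$ small. The third estimate is immediate: from~\Ref{4-d} one has $\xi\ut(t)-e^{B_0 t}\xi\ut(0)=\int_0^t e^{B_0(t-u)}\{B(\xi(u))-B_0\}\xi\ut(u)\,du$, so the factor $\|DB\|_{\r_2}|\xi(u)-x_0|$ on the integrand, integrated against the just-established bound on $|\xi\ut|$, reproduces $k\uh\e\ut\{\e\ui\log(1/\e\ut)+\e\ut e^{\b_0 t}\}$.

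For Part II, write $\D\ui=\txi\ui-\xi\ui$ and $\D\ut=\txi\ut-\xi\ut$; these satisfy integral equations obtained from~\Ref{3a-d}--\Ref{4-d} by subtraction, whose inhomogeneities split as $\{B(\txi)-B_0\}\D\ut+\{B(\txi)-B(\xi)\}\xi\ut$ and analogously for the $\xi\ui$-equation, together with the quadratic remainder $\tc(\txi\ui)-\tc(\xi\ui)$ controlled by~\Ref{deriv-bnd}. Setting $I(t):=\sup_{0\le u\le t}|\D\ui(u)|$ and $L(t):=\sup_{0\le u\le t}e^{-\b_0 u}|\D\ut(u)|$, the Part~I bounds (applied to both~$\xi$ and~$\txi$, whose initial data are close) give linear Gronwall-type inequalities
$$
  L(t)\Le \cst_2\e\uh + C_1\{\e\ui\log(1/\e\ut)+\d_1\}L(t)+C_2\e\ut\{I(t)+\d_1 L(t)\},
$$
$$
  I(t)\Le \cst_1 k\uv\e\ut\log(1/\e\ut) + C_3\d_1\{L(t)\,e^{\b_0 t}+I(t)\}+C_4\e\ui\log(1/\e\ut)\,I(t),
$$
after using $|\xi\ut(u)|\le k\ut\e\ut e^{\b_0 u}$ and $\int_0^t e^{-\b_0 u}|\D\ut(u)|\,du\le L(t)t$, $\int_0^t e^{\b_0 u}\,du\le e^{\b_0 t}/\b_0$. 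Choosing~$\d_1$ small makes the self-coefficients $<1/2$, so the system can be solved by a bootstrap with ansatz $I(t)\le k^{(6)}(\e\ut)^{\g/2}$, $L(t)\le k^{(7)}(\e\ut)^{1+\g/2}$. The hypothesis $\e\uh\le k\uf(\e\ut)^{1+\g}$ then dominates $\cst_2\e\uh$ by $\tfrac12 k^{(7)}(\e\ut)^{1+\g/2}$ once $\e\ut\le k^{(8)}$, and the cross term $C_2\e\ut I(t)\le C_2 k^{(6)}(\e\ut)^{1+\g/2}$ is absorbed into $L$; the critical coupling $C_3\d_1 L(t)e^{\b_0 t}\le C_3\d_1 k^{(7)}(\e\ut)^{\g/2}\d_1$ is absorbed into~$I$.

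The hardest step, and the reason the exponent degrades from~$\g$ to~$\g/2$, is precisely this last coupling: an error of size $(\e\ut)^{1+\g/2}$ in~$\D\ut$ at the end of the interval $t_0(\d_1,\e\ut)$ translates, through the factor $e^{\b_0 t}\le\d_1/\e\ut$, into an error of size $(\e\ut)^{\g/2}$ fed into~$\D\ui$; this feedback forces the square-root loss and is what fixes the choice $\d_1\le\d_0$ depending on~$\g$. All other estimates are standard Gronwall manipulations within the framework established in Part~I.
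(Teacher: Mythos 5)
Part I of your argument is sound and is essentially the paper's: the same bootstrap on the running suprema of $|\xi\ui(u)-x_0\ui|$ and $e^{-\b_0 u}|\xi\ut(u)|$, closed by the observation that $\int_0^t|\xi\ui(u)-x_0\ui|\,du = O(\e\ui\log(1/\e\ut)+\d_0)$ on the interval in question, with the third estimate read off from the integrated form of \Ref{4-d}.

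Part II has a genuine gap, and it sits exactly where you declare the argument hardest. The self-coupling term in the second-component error equation is
\[
\cst_2\int_0^t e^{-\b_0 u}\,\|B(\xi(u))-B_0\|\;|\xi\ut(u)-\txi\ut(u)|\,du
\ \le\ \cst_2\|DB\|_{\r_2}\,C\,(\e\ui+\d_1)\int_0^t L(u)\,du ,
\]
using the uniform bound $|\xi(u)-x_0|=O(\e\ui+\e\ut e^{\b_0 u})=O(\e\ui+\d_1)$. Replacing $\int_0^t L(u)\,du$ by $t\,L(t)$ makes the coefficient of $L(t)$ of order $(\e\ui+\d_1)\,t$, and $t$ runs up to $\b_0^{-1}\log(\d_1/\e\ut)$; so the $\d_1$-part of the coefficient is $O(\d_1\log(1/\e\ut))$, which tends to infinity as $\e\ut\to0$ for any fixed $\d_1>0$. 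Your displayed inequality records this coefficient as $C_1\{\e\ui\log(1/\e\ut)+\d_1\}$ --- the length-of-interval factor has been applied to the $\e\ui$-part but silently dropped from the $\d_1$-part --- and your resolution ``choose $\d_1$ small so the self-coefficients are $<1/2$'' therefore fails. The paper instead retains this term as $k_{12}(\d+\e\ui)\int_0^t\tD\ut(u)\,du$ and applies Gronwall; the resulting factor $\exp\{k_{12}(\d_1+\e\ui)\,t\}=O((\e\ut)^{-k_{12}\d_1/\b_0})$ at $t=t_0(\d_1,\e\ut)$ is precisely the source of the degradation from $\g$ to $\g/2$ and of the dependence of $\d_1$ on $\g$ (one takes $\d_1$ of order $\g\b_0/k_{12}$).

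Relatedly, your diagnosis of the $\g/2$ loss is wrong. The feedback you single out --- $L$ entering $I$ through $e^{\b_0 t}L(t)$ and returning through $\e\ut I(t)$ --- contracts by the factor $\e\ut e^{\b_0 t}\le\d_1$ per round trip and loses no powers of $\e\ut$; if your system were correct as written it would give $L=O(\e\uh+(\e\ut)^2\log(1/\e\ut))=O((\e\ut)^{1+\g})$ and $I=O((\e\ut)^{\g})$, i.e.\ no loss at all, contradicting your own conclusion. To repair the proof you must either carry out the Gronwall step as the paper does, or genuinely justify absorbing the $\d_1$-coefficient by noting that the dangerous part of $\|B(\xi(u))-B_0\|$ is $O(\e\ut e^{\b_0 u})$, so that $\int_0^t\e\ut e^{\b_0 u}L(u)\,du\le\b_0^{-1}\d_1 L(t)$ by monotonicity of $L$ --- an observation that is absent from your write-up and that would in fact change where (and whether) the exponent degrades.
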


\medskip
We also consider the final stages of such a process, before
absorption in a strongly stable equilibrium with the $2$-components equal to zero.  Under
such circumstances, we can still work under assumptions
similar to those made in Section~\ref{assumptions}.
The main difference is to require that the eigenvalue of~$B_0$ with largest real part is negative;
we denote it by~$-\b_1$.  We also assume that the equilibrium~$x_0$ is strongly
attractive, in the sense that
\eq\label{matrix-bnd-attract}
    |e^{Ct}x| \Le \cst_1 e^{-\k t}|x|,\quad x \in \reals^{d_1},\ t\ge0,
\en
for some $\k > 0$ and $\cst_1 < \infty$;  the previous assumptions of Section~\ref{assumptions}
only required $\k \ge 0$.
The analogue of Lemma~\ref{d-lem-1} is then as follows.

\begin{lem}\label{d-lem-2}
With the assumptions of Section~\ref{assumptions}, modified as in Section~\ref{absorption},
let $\xi_\d$  satisfy the equations \Ref{3a-d} and~\Ref{4-d}, with $\xi_\d(0) =: x_{\d0}$
such that $|x_{\d0} - x_0| \le \d$.  Then, for any $0 < \k' < \min\{\k,\b_1\}$,
there exists a $\d_0 > 0$ and constants $\hk\uii$ such that, for all~$0 < \d \le \d_0$,
\eqs
  \sup_{u \ge 0} e^{\k'u}|\xi_\d\ui(u) - x_0\ui| &\le& \hk\ui\d;\\
  \sup_{u \ge 0} e^{\b_1 u}|\xi_\d\ut(u)| &\le& \hk\ut\d ;\\
  \sup_{u \ge 0} e^{\b_1 u}|\xi_\d\ut(u) - e^{B_0u}x_\d\ut(0)| &\le& \hk\uh\d^2.
\ens
Furthermore, for any $\th > 0$, there exists a $\d(\th) > 0$ such that,
for any $0 < \d \le \d(\th)$, if~$\txi_\d$ satisfies \Ref{3a-d} and~\Ref{4-d}
with~$\txi_\d(0)$ satisfying $|\txi_\d(0) - x_{\d0}| \le \e\uf$, and if
$0 < \h < \d$ and $\e\uf\h^{-\th} \le \KKK$, for~$\KKK$ defined implicitly in~\Ref{eps-restn},
then
\eqa
    \sup_{0\le u\le t_1(\d,\h)} |\xi_\d\ui(u) - \txi_\d\ui(u)| &\le& \hk^{(5)}\e\uf\h^{-\th};
                             \label{delta-3-d}\\
    \sup_{0\le u\le t_1(\d,\h)}\{e^{\b_1 u}|\xi_\d\ut(u) - \txi_\d\ut(u)|\}  &\le& \hk^{(6)} \e\uf\h^{-\th},
                    \label{delta-4-d}
\ena
for suitable $\hk^{(5)}$ and $\hk^{(6)}$.
\end{lem}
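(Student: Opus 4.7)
The plan is to mirror the structure of the proof of Lemma~\ref{d-lem-1}, using the variation-of-constants representations \Ref{3a-d} and \Ref{4-d}, but with the contracting bounds $|e^{Ct}x| \le \cst_1 e^{-\k t}|x|$ from \Ref{matrix-bnd-attract} and $|e^{B_0 t}x| \le \cst_2 e^{-\b_1 t}|x|$ replacing the expanding ones used before.

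For the three unconditional bounds I would run a bootstrap on $[0,\infty)$. Assume that, for $u \le t$, $|\xi_\d\ut(u)| \le 2\cst_2\d e^{-\b_1 u}$ and $|\xi_\d\ui(u) - x_0\ui| \le M\d e^{-\k' u}$; substituting into \Ref{3a-d} and \Ref{4-d} and estimating $B(\xi_\d) - B_0$, $A(\xi_\d) - A(x_0)$ and $\tc(\xi_\d\ui)$ by means of $\|DB\|_{\r_2}$, $\|DA\|_{\r_2}$ and \Ref{deriv-bnd}, every integral that appears has the form $\int_0^t e^{-\k(t-u)}e^{-\a u}du$ or $\int_0^t e^{-\b_1(t-u)}e^{-\a u}du$ with $\a \in \{\k',\b_1,\k'+\b_1,2\k'\}$. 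Since $0<\k'<\min(\k,\b_1)$, each is uniformly bounded in $t$ by a constant depending only on the spectral gaps, so the resulting estimates strictly refine the bootstrap hypotheses as soon as $M$ is fixed large enough and then $\d \le \d_0$ is taken small enough. The third bound is then immediate from $\xi_\d\ut(t) - e^{B_0 t}\xi_\d\ut(0) = \int_0^t e^{B_0(t-u)}(B(\xi_\d(u)) - B_0)\xi_\d\ut(u)\,du$ on substituting $|B(\xi_\d(u)) - B_0| \le \|DB\|_{\r_2}\hk\ui\d e^{-\k' u}$ and $|\xi_\d\ut(u)| \le \hk\ut\d e^{-\b_1 u}$, integrating the product $e^{-\b_1(t-u)}e^{-(\k'+\b_1)u}$ against $e^{-\k' u}$ and picking up the factor $\d^2$.

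For the comparison bounds I would first apply Part~1 to both $\xi_\d$ and $\txi_\d$, obtaining the same exponential decay estimates for the two trajectories (the side condition $\e\uf\h^{-\th} \le \KKK$ of \Ref{eps-restn} ensures that $\txi_\d(0)$ is close enough to $x_0$ for Part~1 to apply). Setting $p(t) := |\xi_\d\ui(t) - \txi_\d\ui(t)|$ and $w(t) := e^{\b_1 t}|\xi_\d\ut(t) - \txi_\d\ut(t)|$, I would subtract the two variation-of-constants representations, bound the coefficient differences by $\|DA\|_{\r_2}$ and $\|DB\|_{\r_2}$, and invoke \Ref{deriv-bnd} in the form $|\tc(\xi_\d\ui(u)) - \tc(\txi_\d\ui(u))| \le \kkkc p(u)(p(u) + C\d e^{-\k' u})$; a preliminary crude bootstrap showing $p,w \le C\e\uf$ on a short initial interval justifies dropping the $p^2$ term thereafter. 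What emerges is a pair of coupled integral inequalities in which the \emph{only} term capable of driving genuine growth is $C\d\int_0^t p(u)\,du$ in the inequality for $w$, arising from the $(B(\xi_\d)-B(\txi_\d))\txi_\d\ut$ term and carrying a crucial prefactor of $\d$ from $|\txi_\d\ut(u)| \le C\d e^{-\b_1 u}$; every other kernel either decays exponentially or carries such a factor.

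To close the system, I would make the ansatz $p(u)+w(u) \le K\e\uf e^{\th\b_1 u}$ for $u \le t$ and feed it back into the right-hand sides. The decaying-kernel integrals contribute $O(K\e\uf)$ uniformly in $t$, while $\int_0^t p(u)\,du$ contributes $K\e\uf e^{\th\b_1 t}/(\th\b_1)$ with coefficient $O(\d)$, so the whole right-hand side is bounded by $K\e\uf e^{\th\b_1 t}$ as soon as $K \ge 2(\cst_1+\cst_2)$ and $C\d/(\th\b_1) \le 1/2$; the latter is arranged by choosing $\d=\d(\th)$ sufficiently small. Evaluating at $t = t_1(\d,\h) = \b_1^{-1}\log(\d/\h)$ converts $e^{\th\b_1 t}$ into $(\d/\h)^\th$, delivering \Ref{delta-3-d} and \Ref{delta-4-d} with $\hk^{(5)}$ and $\hk^{(6)}$ absorbing the harmless factor $\d^\th$. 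The principal technical obstacle is engineering the Gronwall accounting so that the growth-driving coefficient in the $w$-inequality is proportional to $\d$ rather than to a fixed constant; that is what permits $\th$ to be chosen arbitrarily small by shrinking $\d$, and it relies on exploiting the $e^{-\b_1 u}$ decay of $|\txi_\d\ut(u)|$ from Part~1, together with the $e^{-\k' u}$ decay of $|\xi_\d\ui - x_0\ui|$ in handling the $\tc$ term.
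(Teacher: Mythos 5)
Your proposal is correct and follows essentially the same route as the paper: exponentially weighted suprema ($e^{\k'u}$ for the first components, $e^{\b_1 u}$ for the second), a self-improving bootstrap closed by a continuity argument for the three unconditional bounds, the third bound by direct substitution of the first two into the Duhamel formula, and for the comparison a Gronwall inequality whose growth coefficient carries a factor $\d$, so that taking $\d(\th)\asymp\th$ turns $e^{O(\d)t_1(\d,\h)}$ into $\h^{-\th}$. The only cosmetic difference is where the condition $\e\uf\h^{-\th}\le\KKK$ is invoked — in the paper it closes the bootstrap over the whole interval $[0,t_1(\d,\h)]$ rather than merely controlling the initial condition — but your ansatz-and-feedback scheme needs and uses it in the same way.
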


Note that 
the estimates made in the discussion preceding Theorem~\ref{combined-2} can be justified by
the final statements of Lemma~\ref{d-lem-2}.  Taking $\e\uf = O(N^{-\g})$ for some $\g > 0$ and
$\h = N^{-5/12}$, choose~$\th$ such that $\th < \max\{\k'/\b_1, 6\g/5\}$.

\section{The branching approximation}\label{phase-1}
\setcounter{equation}{0}
In this section, we establish the approximation to $N x_N\ut$ by a Markov branching process~$Z$
in the early stages,
starting with $x_N(0) = x_{N,0}$ such that $x_{N,0}\ut = N^{-1}Z_0$ and
$|x_{N,0}\ui - x_0\ui| \le \e_N\ui := N^{-\a}$, for $\a > 1/3$.
The process~$Z$ is obtained by replacing $\bg^J(x_N(t))$ by~$\bg^J(x_0)$ in the transition rates
which have $J\ut \neq 0$, and by taking its corresponding jumps to be~$J\ut$.  It
is a Markov branching process; for each~$J$ such that $J\ut \neq 0$, an individual of
type~$s(J)$ gives birth to~$J_i$ individuals of type~$i$, $d_1 < i \le d$,
(if $J_{s(J)} = -1$, this represents the death of an individual
of type~$s(J)$) with {\it per capita\/} rate~$\bg^J(x_0)$.  It is thus
natural to index the components of~$Z$ by $\{d_1+1,\ldots,d\}$,
to match the indexing in~$X_N$; we denote the resulting state space of~$Z$ by $\zz$.
For $z \in \zz$, let
\eq\label{AB-q-def}
   q^J(z) \Def  \bg^J(x_0)z_{s(J)}\quad\mbox{and}\quad q(z) \Def \sjjt q^J(z);
\en
then, if~$Z$ is in state~$z$, the time until its next jump is distributed as~$\Exp(q(z))$, and
the probability that it is a $J$--transition, causing a corresponding change of~$J\ut$ in~$Z$,
is $q^J(z)/q(z)$, $J\in\jj_2$.  Since there are only finitely many $J \in \jj$, the means
and covariances of the offspring distributions of individuals of the different types are
all finite.  In particular, as noted in Section~\ref{results}, the mean growth rate matrix
is given by~$B_0^T$, whose positive left and right eigenvectors $\bu^T$ and~$\bv$ are normalized
so that $\bu^T \bone = \bu^T \bv = 1$.
Our approximation shows that, except on an event
of negligible probability, the process~$Nx_N\ut$ can be constructed
so as to have paths identical to those of~$Z$, up to the time~$\t_{N1}^Z$ at which, if ever, $\bv^T Z$
has grown by at least the amount $N^{1-\a}$ from
its initial value of $\bv^T Z_0$.  The full details are given below in Proposition~\ref{phase-1-conc}.

We begin by considering the first components~$x_N\ui(\cdot)$ of~$x_N$.
Under our assumptions on~$F$, they satisfy
the equation
\[
     dx_N\ui(t) \Eq A(x_N(t))x_N\ut(t) + C(x_N\ui(t) - x_0\ui) + \tc(x_N\ui(t)) + dm_N\ui(t),
\]
where~$m_N$ is as defined in~\Ref{AB-mN-def},
and this can be integrated by variation of constants to give
\eqa
    x_N\ui(t) &=& x_0\ui + e^{Ct}(x_{N,0}\ui - x_0\ui) \non\\[1ex]
    &&\quad\mbox{} + \int_0^t e^{C(t-u)}\{A(x_N(u)) x_N\ut(u) + \tc(x_N\ui(u))\}\,du \non\\
      &&\qquad\mbox{}\ + m_{N}\ui(t) + C\int_0^t e^{C(t-u)} m_N\ui(u)\,du;  \label{1-new}
\ena
note that
\[
    m_{N}\ui(t) + C\int_0^t e^{C(t-u)} m_N\ui(u)\,du \Eq \int_0^t e^{C(t-u)} dm_N\ui(u),
\]
explaining the stochastic term in~\Ref{1-new}.  For~$x_N\ut$, up to the time
at which it has made~$n(N)$ jumps, it is enough for now to know
that it is bounded by $N^{-1}\{|Z_0| + J^*n(N)\}$, where $J^* := \max_{J\in\jj_2}|J|$.

We first use~\Ref{1-new} to show
that~$x_N\ui(t)$ moves away from~$x_0\ui$ rather
slowly.  For this, it is necessary to show that~$|m_N|$ remains uniformly small with
high probability for a long enough time interval.  This is the substance
of the following lemma. To state it, we define
\eq\label{tau-def}
    \t_N \Def \inf\{t > 0\colon\, |x_N(t)- x_0| > \r_2\},
\en
and use~$\pr^0$ to denote probabilities given $x_N(0) = x_{N,0}$.

\begin{lem}\label{MG-control}  
Let $T_N := k\log N$ for some $k> 0$, and define
\[
    E_N(k) \Def \Bigl\{ \sup_{0\le t\le T_N\wedge\t_N}|m_N(t)| \le  \h_{N1}(k) \Bigr\},
\]
where $\h_{N1}(k) := \Bigl(2\sqrt{k}\sjj |J|\Bigr) N^{-1/2}(\log N)^{3/2}$.
Then $\pr^0[E_N(k)^c] = O(N^{-r})$ for any $r > 0$.
\end{lem}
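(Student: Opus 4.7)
The plan is to decompose $m_N$ into the contributions from each jump type and control each one via Bernstein-type concentration for a compensated Poisson process. Write $M_N^J(t) := N^{-1}P^J(NG_N^J(t)) - G_N^J(t)$, so that $m_N(t) = \sjj J M_N^J(t)$ and hence $|m_N(t)| \le \sjj |J|\,|M_N^J(t)|$. It therefore suffices to show, for each fixed $J \in \jj$ and each $r > 0$, that
\[
    \pr^0\Bl \sup_{0 \le t \le T_N \wedge \t_N} |M_N^J(t)| > 2\sqrt k\, N^{-1/2}(\log N)^{3/2} \Br \Eq O(N^{-r}),
\]
and then take a union bound over the finite set $\jj$.

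The first step is to eliminate the random time change. Since the functions $g^J$ are continuous, they are uniformly bounded on $\{|x - x_0| \le \r_2\}$ by some $M < \infty$ depending only on the model. Hence on $\{t \le T_N \wedge \t_N\}$ one has $G_N^J(t) \le Mk\log N$, and because $s \mapsto G_N^J(s)$ is nondecreasing we obtain the pathwise dominance
\[
    \sup_{0 \le t \le T_N \wedge \t_N} |M_N^J(t)| \Le N^{-1} \sup_{0 \le s \le NMk\log N} |P^J(s) - s|,
\]
whose right-hand side involves only the Poisson process $P^J$ indexed by a deterministic interval, and no longer depends on the trajectory of~$x_N$.

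The final step is to apply the standard Bernstein inequality for a rate-one Poisson process: for any $\Lambda, \l > 0$,
\[
    \pr\Bl \sup_{0 \le s \le \Lambda}|P^J(s) - s| > \l \Br \Le 2\exp\Bl -\frac{\l^2}{2(\Lambda + \l/3)}\Br,
\]
which follows by applying Doob's maximal inequality to the exponential martingale $\exp(\th(P^J(s) - s) - s(e^\th - 1 - \th))$ and optimizing over $\th > 0$ (and symmetrically for the lower tail). Setting $\Lambda = NMk\log N$ and $\l = 2\sqrt k\, N^{1/2}(\log N)^{3/2}$, one checks that $\l/\Lambda = O((\log N / N)^{1/2}) \to 0$, while $\l^2/(2\Lambda) \sim (2/M)(\log N)^2$, so the right-hand side is $\exp(-c(\log N)^2)$ for some $c > 0$, which is $O(N^{-r})$ for every $r$. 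There is no substantial obstacle here; the only point requiring care is the handling of the random time change $NG_N^J(t)$, which is dealt with by the sample-path dominance above before invoking concentration for the underlying Poisson process.
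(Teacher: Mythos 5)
Your proposal is correct and follows essentially the same route as the paper: decompose $m_N$ over the jump types $J$, use the compactness of $\{|x-x_0|\le\r_2\}$ to bound $G_N^J(t)$ by a deterministic multiple of $T_N$ and thereby reduce to the supremum of a compensated Poisson process over a fixed interval, then apply exponential (Chernoff/Bernstein) concentration. The only cosmetic difference is that you handle the supremum via Doob's inequality applied to the exponential martingale, whereas the paper's prefactor indicates a pointwise Chernoff bound combined with a union bound over a discretization; both yield the same $\exp(-c(\log N)^2) = O(N^{-r})$ conclusion.
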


\begin{proof}
Let~$E_N'(k)$ denote the event
\eq\label{good-event-k}
    E_N'(k) \Def \Bigl\{\max_{J\in\jj}\sup_{0\le t\le T_N\wedge\t_N}|N^{-1}P^J(NG_N^J(t)) - G_N^J(t)|
                                      \le 2N^{-1/2}\sqrt{T_N} \log N \Bigr\}.
\en
Note that the quantities~$t^{-1}G_N^J(t)$ are uniformly bounded in~$t \le \t_N$, because the
functions~$g^J$ are continuous and~$x_N(t)$ is restricted to a compact set for such~$t$.
Denoting this bound by~$g^*$, it follows from the Chernoff inequalities that, for~$N$ such
that $T_N \ge 1$,
\eq\label{mN-bnd-1-k}
   \pr[E_N'(k)^c]
      \Le 2|\jj|N^{1/2} g^* T_N N \exp\{-(\log N)^2/\{2(g^*+1)\}\} \Eq O(N^{-r}),
\en
for any $r > 0$.  However, on the event~$E_N'(k)$,
\eq\label{mN-bnd-2-k}
   \sup_{0\le t\le T_N\wedge\t_N}|m_N(t)| \le \Bigl(2\sqrt{k}\sjj |J|\Bigr) N^{-1/2}(\log N)^{3/2},
\en
so that $E_N(k) \supset E_N'(k)$, which, with~\Ref{mN-bnd-1-k}, proves the lemma.
\end{proof}

\medskip
Now define $\t_1(m) := \inf\{t > 0\colon\, |x_N\ut(t)| > m/N\}$, and write
\[
    d_N\ui(t,m) \Def \sup_{0\le u\le t\wedge\t_1(m)}|x_N\ui(u) - x_0\ui|.
\]

\begin{lem}\label{close-start}
With the assumptions and notation of Section~\ref{assumptions},
fix any $k > 0$, and assume that~$N$ is large enough so that
$$
   k\log N \max\{\cst_1|x_{N,0}\ui - x_0\ui|,\h_{N1}'(k)\} \le 1/(40\cst_1 \kkkc ),
$$
where $\h_{N1}'(k) := \h_{N1}(k)(1 + \cst_1\|C\|k\log N)$.  Suppose that~$E_N(k)$
occurs.
Then, for all $0 \le t \le k\log N$ and $m < N/\{20k^2\cst_1^2\kkkc \|A\|_{\r_2}(\log N)^2\}$,
\[
    d_N\ui(t,m) \Le \tfrac87\{\cst_1(|x_{N,0}\ui - x_0\ui| + t\|A\|_{\r_2}(m/N)) + \h_{N1}'(k)\}.
\]
\end{lem}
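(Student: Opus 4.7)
The plan is to derive a closed integral inequality for
\[
   D(t) \Def \sup_{0\le u\le t\wedge\t_1(m)\wedge\t_N}|x_N\ui(u) - x_0\ui|
\]
from the variation-of-constants representation~\Ref{1-new}, and then close it by a bootstrap calibrated to the three smallness hypotheses of the lemma.

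First I would bound each of the four terms of~\Ref{1-new} individually. The initial-condition term is at most $\cst_1|x_{N,0}\ui - x_0\ui|$ by~\Ref{C-bound}. For $u<\t_1(m)\wedge\t_N$ one has $|x_N\ut(u)|\le m/N$ and $|x_N(u)-x_0|\le\r_2$, so the $A$-integral is at most $\cst_1\|A\|_{\r_2}(m/N)\,t$. Since $x_0\ui$ is an equilibrium of $c$ and $c=C(\cdot-x_0\ui)+\tc$, we have $\tc(x_0\ui)=0$; applying~\Ref{deriv-bnd} with $w_2=x_0\ui$ gives $|\tc(x_N\ui(u))|\le\kkkc|x_N\ui(u)-x_0\ui|^2$, so the $\tc$-integral is bounded by $\cst_1\kkkc\int_0^t D(u)^2\,du\le\cst_1\kkkc tD(t)^2$. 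The stochastic contribution, rewritten by integration by parts as $\int_0^t e^{C(t-u)}\,dm_N\ui(u)$, is bounded on $E_N(k)$ and for $t\le k\log N$ by $\h_{N1}(k)(1+\cst_1\|C\|k\log N) = \h_{N1}'(k)$. Setting $L(t):=\cst_1(|x_{N,0}\ui-x_0\ui|+t\|A\|_{\r_2}(m/N))+\h_{N1}'(k)$, one arrives at $D(t)\le L(t)+\cst_1\kkkc tD(t)^2$ on the relevant interval.

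Next I would run the bootstrap. Provisionally assume $D(t)\le(8/7)L(t)$; the inequality then yields $D(t)\le L(t)\{1+(64/49)\cst_1\kkkc tL(t)\}$. The three hypotheses of the lemma bound the three contributions to $\cst_1\kkkc tL(t)$ (at $t=k\log N$) by $1/40$, $1/20$ and $1/40$ respectively, the $m/N$-piece being quadratic in $t$ and supplying the $1/20$, and summing to $1/10$; since $(64/49)(1/10)=64/490<1/7$, the consistency $D(t)\le(8/7)L(t)$ holds with strict slack. Because $D$ is non-decreasing with jumps of size at most $J^*/N$, asymptotically negligible compared with $\h_{N1}'(k)\le L(t)$, a standard first-exit argument propagates the bound to all of $[0,k\log N\wedge\t_1(m)\wedge\t_N]$. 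A posteriori, $(8/7)L(t)$ and $m/N$ are both much smaller than $\r_2$ under the hypotheses, so $\t_N$ is not attained on this window and the use of $\|A\|_{\r_2}$ is justified retroactively.

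The main obstacle is the quadratic self-coupling produced by~$\tc$: a naive Gronwall estimate would blow up at the second iteration, and the precise numerical factors $1/(40\cst_1\kkkc)$ and $1/(20k^2\cst_1^2\kkkc\|A\|_{\r_2}(\log N)^2)$ in the hypotheses are chosen exactly to keep the $8/7$-bootstrap consistent over the full window of length $k\log N$.
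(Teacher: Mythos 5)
Your proposal is correct and follows essentially the same route as the paper: the variation-of-constants formula \Ref{1-new}, term-by-term bounds (with the quadratic $\tc$-term absorbed via the hypotheses on $k\log N$ and $m$), and an $8/7$-bootstrap closed by a first-exit argument. The only cosmetic difference is that the paper bootstraps the continuous quantity $\cst_1\kkkc\int_0^t d_N\ui(u,m)\,du\le 1/8$ (so Lemma~\ref{AB-little-lemma} applies with $\DDD=0$), whereas you bootstrap $d_N\ui(t,m)$ itself and therefore, correctly, account for its $O(J^*/N)$ jumps against the slack $\h_{N1}'(k)$.
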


\begin{proof}
From equation~\Ref{1-new} and the assumptions on~$C$ and~$\r_2$, and from the definition of~$E_N(k)$,
it follows immediately that, for $t \le (\t_1(m)\wedge k\log N) $ such that also
\eq\label{d-1-integral-cond1}
      \cst_1 \kkkc  \int_0^t d_N\ui(u,m)\,du \Le \frac18,
\en
we have
\eqa
   |x_N\ui(t) - x_0\ui| &\le& \!\!\cst_1 |x_{N,0}\ui - x_0\ui| + \h_{N1}(k)\phantom{XXXXXXXXXXXXXXXX}\non \\
        &&\mbox{}\!\!   + \cst_1\int_0^t
    \Bigl\{\|A\|_{\r_2}\frac{m}N + \kkkc  \{d_N\ui(u,m)\}^2 +\|C\|\h_{N1}(k) \Bigr\}\,du \non \\
      &\le& \!\! \cst_1\Bigl\{|x_{N,0}\ui - x_0\ui| + t\|A\|_{\r_2}\frac{m}N\Bigr\}
            +  \tfrac18 d_N\ui(t,m)  + \h_{N1}'(k).
     \label{lemma-result}
\ena
Now
for~$t \le (\t_1(m)\wedge k\log N)$, \Ref{lemma-result} implies that
\eqs
  \lefteqn{\cst_1 \kkkc  \int_0^t d_N\ui(u,m)\,du} \\
    &&\Le \cst_1 \kkkc  \,\tfrac87
     \{\cst_1(t|x_{N,0}\ui - x_0\ui| + \tfrac12 t^2 \|A\|_{\r_2}(m/N)) + t\h_{N1}'(k)\} \\
    &&\Le \tfrac87\{\tfrac1{40} + \tfrac 1{40} + \tfrac1{40}\} \Eq \tfrac3{35}\ <\ \tfrac18 ,
\ens
the bound assumed in~\Ref{d-1-integral-cond1}.
Hence, since $\int_0^t d_N\ui(u,m)\,du$ is continuous in~$t$, we can apply
Lemma~\ref{AB-little-lemma} with $\DDD = 0$ to show that, for all~$N$ sufficiently large,
the inequality~\Ref{lemma-result} holds for all~$t \le (\t_1(m)\wedge k\log N)$, and the
lemma is proved.
\end{proof}

\medskip
Since, in the early phase, $x_N\ut(t) \approx 0$ and Lemma~\ref{close-start} shows
that $x_N\ui(t) \approx x_0\ui$, the process~$Nx_N\ut$ can plausibly be well approximated by replacing
$\bg^J(x_N(t))$ by~$\bg^J(x_0)$ in its transition rates, obtaining the Markov
branching process~$Z$.  To show that this is indeed the case, we consider a path starting in~$Z_0$,
having $J_1,\ldots,J_n$ as its first~$n$ transitions and $t_1,\ldots,t_n$ their times. Then the probability
density of this path segment is given by
\eq\label{density-0}
   \prod_{l=0}^{n-1} \Bl \exp\{-(t_{l+1}-t_l)q(z_l)\}  q^{J_{l+1}}(z_l) \Br,
\en
where $z_l := Z_0 + \sum_{i=1}^l J_i\ut$ and $t_0=0$, and the functions $q$ and~$q^J$ are as in~\Ref{AB-q-def}.

The corresponding expression for~$Nx_N\ut$ is
more complicated, since the process is only Markovian if the state space is extended to
include all the original coordinates.  Defining
\eqs
   q_N^J(x\ui,z) &:=& \bg^J([x\ui,N^{-1}z]) z_{s(J)};\qquad
   q_N(x\ui,z) \Def \sjjt q_N^J(x\ui,z),
\ens
for $x \in \reals_+^d$, $z \in \zz$ and $J \in \jj_2$, with $[y_1,y_2]$ denoting $(y_1^T,y_2^T)^T$,
and then writing
\eqs
   H^J(x\ui,z,t)
   &:=& \ex^{(x\ui,z)}\Bl \exp \Blb -\int_0^t q_N(x_N\ui(u), z)\,du \Brb
                  q_N^J(x_N\ui(t),z)  \Br,
\ens
the probability density at $\{(J_1,t_1),\ldots,(J_n,t_n)\}$ is given by
\eq\label{density-1}
      \ex^0\Bl \prod_{l=0}^{n-1} H^{J_{l+1}}(x_N\ui(t_l),z_l,t_{l+1}-t_l) \Br;
\en
here, $\ex^{(x\ui,z)}$ denotes expectation conditional on
$x_N(0) = \Bl\begin{array}{c} x\ui \\ N^{-1}z \end{array}\Br$,
and $\ex^0$ as before denotes expectation conditional on $x_N(0) = x_{N,0}$.
Hence the likelihood ratio, with respect to the branching process measure, of a path successively
entering the states $z_{\{1,n\}} := z_1,z_2,\ldots,z_n$ at times $t_{\{1,n\}} := t_1,\ldots,t_n$ is given by
\eq\label{LR-1}
  R_n(z_{\{1,n\}},t_{\{1,n\}}) \Def
    \ex^0\Bl \prod_{l=0}^{n-1} \tG^{J_{l+1}}(x_N\ui(t_l),z_l,t_{l+1}-t_l) \Br,
\en
where
\eqa
    \lefteqn{\tG^J(x\ui,z,t) \Def H^J(x\ui,z,t) e^{tq(z)} / q^{J}(z)} \non\\
     &=& \ex^{(x\ui,z)}\Bl \exp \Blb -\int_0^t \{q_N(x_N\ui(u), z) - q(z)\}\,du \Brb
                  \frac{q_N^J(x_N\ui(t),z)}{q^{J}(z)}  \Br.\phantom{XX} \label{tilde-G-def}
\ena

Let $\t_2(n)$ denote the time at which~$x_N\ut$ makes its $n$-th jump. Then
$|x_N\ui(u) - x_0\ui|$ remains uniformly of order
$$
    O\{N^{-1/2}(\log N)^{5/2} + N^{-\a} + n(N)\log N/N\}
$$
for $0 \le u \le (\t_2(n(N))\wedge k\log N)$, except an event of probability  $O(N^{-r})$,
for any $r>0$, because of Lemma~\ref{close-start}.  This implies that the rates
$q_N^J(x_N\ui(u),z)$ are close to~$q^J(z)$, $J\in\jj_2$, throughout the same $u$-interval.
We now use this to show
that the joint distributions of the times and values of the
first~$n(N)$ jumps of the processes $Z$ and~$Nx_N$ are close to one another.

\begin{lem}\label{phase-1-lem}
Let $x_N(0)$
be such that $|x_N\ui(0) - x_0\ui| = O(N^{-\a})$
and $x_{N}\ut(0) = N^{-1}Z_0$.  Then, for any
fixed $k > 0$ and $1/3 < \a < 1$,
the total variation distance $\dtv\uN$ between the distributions of the paths of~$Nx_N\ut$
and those of~$Z$, restricted to the first~$k N^{1-\a}$ jumps, is such that
$\lim_{N\to\infty}\dtv\uN = 0$.
\end{lem}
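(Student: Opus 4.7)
The strategy is to bound the total variation distance via the likelihood ratio $R_n$ defined in \Ref{LR-1}, using the standard inequality
$$
  \dtv\uN \Le \tfrac12\,\ex^Z |R_n - 1|,
$$
where $\ex^Z$ denotes expectation under the branching process law. The plan is to show $R_n \approx 1$ with high probability by restricting attention to a good event $G_N := E_N(k) \cap G_N'$ of asymptotically full probability, where $E_N(k)$ is the martingale-control event of Lemma~\ref{MG-control} and $G_N'$ encodes standard concentration for $Z$ (ensuring the usual growth estimates for $|Z(u)|$ and that the time of the $n$th jump is $O(\log N)$ for $n = \lceil kN^{1-\a}\rceil$). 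On $G_N$, Lemma~\ref{close-start} applied with $m \le |Z_0| + J^* n = O(N^{1-\a})$ and $t = k\log N$ yields
$$
  \sup_{0 \le u \le t_n}|x_N\ui(u) - x_0\ui| \Le \h_N \Def O\bigl(N^{-\a}\log N + N^{-1/2}(\log N)^{3/2}\bigr).
$$

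I would then expand each factor $\tG^{J_{l+1}}(x_N\ui(t_l), z_l, t_{l+1}-t_l)$ from~\Ref{tilde-G-def}. By the continuous differentiability of $\bg^J$,
$$
  |q_N^J(x\ui, z) - q^J(z)| \Le L\,|z|\,(|x\ui - x_0\ui| + N^{-1}|z|),
$$
with an analogous bound for $q_N - q$. Taking logarithms and Taylor expanding, the leading contribution to $\log R_n$ is the integral $-\int_0^{t_n}[q_N(x_N\ui(u), Z(u)) - q(Z(u))]\,du$ plus a finite sum of jump-ratio logarithms, with quadratic corrections controllable on $G_N$. Substituting the variation-of-constants representation \Ref{1-new} for $x_N\ui(u) - x_0\ui$ reduces the bound to integrals involving the branching trajectory $Z$, the drift driven by $x_N\ut = N^{-1}Z$, and the martingale $m_N\ui$.

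The principal obstacle is extracting the condition $\a > 1/3$ rather than the weaker $\a > 1/2$ that a crude per-jump estimate would yield: summing the uniform bound $\h_N$ over the $n \sim N^{1-\a}$ jumps gives a total error of order $N^{1-2\a}$, which vanishes only for $\a > 1/2$. To reach the claimed range, one must exploit the exponential growth pattern of $|Z(u)|$: the integrand in the dominant cumulative error is roughly of the form $N^{-1}|Z(u)|^2$, whose mass is concentrated near the terminal time $t_n$, combined with a centering/martingale argument for the $m_N\ui$ contribution to $x_N\ui - x_0\ui$. I expect the careful balancing of these drift and fluctuation contributions, so as to recover exactly the exponent $1/3$, to be the main technical difficulty, the argument being then completed by noting that $\pr^Z[G_N^c] = o(1)$ by Lemma~\ref{MG-control} and standard branching-process estimates.
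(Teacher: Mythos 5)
Your general framework (likelihood ratio $R_n$, a good event, Lemma~\ref{close-start} for the first components) matches the paper's, but there is a genuine gap at precisely the step you flag as the ``principal obstacle'', and the device you propose to close it does not work. You identify the dominant contribution to $\log R_n$ as (roughly) $-\int_0^{t_n}\{q_N(x_N\ui(u),Z(u))-q(Z(u))\}\,du$, whose worst part is $\asymp N^{-1}\int_0^{t_n}|Z(u)|^2\,du$, and you hope that the concentration of this integral's mass near $t_n$ saves the day. It does not: with $|Z(u)|\asymp e^{\b_0u}$ and $e^{\b_0 t_n}\asymp N^{1-\a}$, the integral is $\asymp N^{-1}\cdot N^{2(1-\a)}=N^{1-2\a}$ no matter where its mass sits, so a direct first-moment bound on $\log R_n$ still forces $\a>1/2$. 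The missing idea is that this drift is \emph{exactly compensated}, to first order, by the jump-ratio factors $q_N^J/q^J$: $(R_n,\Sigma_n)$ is a mean-one martingale, so $\ex[V_{l+1}-1\giv\Sigma_l]=0$, and the correct quantity to control is the conditional \emph{variance}. The paper shows $\ex[(V_{l+1}-1)^2\giv\Sigma_l]\le 27(Q^*\th_N)^2$, whence by martingale orthogonality $\ex^0\{(R_n-1)^2\}\le ne(Q^*\th_N)^2\cdot 27$ and $\dtv\uN=O(n^{1/2}\th_N)$. With $n\asymp N^{1-\a}$ and $\th_N\asymp N^{-\a}\log N$ this is $O(N^{(1-3\a)/2}\log N)$, which is where the threshold $\a>1/3$ actually comes from. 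Your ``centering/martingale argument'' is aimed at the wrong object (the $m_N\ui$ contribution to $x_N\ui-x_0\ui$), not at the likelihood-ratio increments themselves.

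A secondary but substantive difference: rather than conditioning on a good event inside the expectation defining $R_n$ (delicate, since likelihood ratios can be large on small events), the paper first \emph{modifies the process} to $x_{N,\th}$, freezing the rates at $\bg^J(x_0)$ outside the $\th_N$-ball, so that the per-step variance bound holds pathwise and unconditionally; it then couples $x_N$ to $x_{N,\th}$ up to the exit time from that ball, and rules out early exit via Lemma~\ref{close-start} for the first components together with the branching dichotomy (extinction or exponential growth of $W^\bv$) to show $Z$ cannot linger at small positive values beyond time $k\log N$. Your sketch gestures at both pieces but does not supply the dichotomy argument, and, more importantly, without the variance (rather than drift) estimate for $R_n$ your proof cannot reach the stated range $1/3<\a<1$.
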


\begin{proof}
Letting $\t_{\{1,n\}}$ denote the times of the first~$n$ jumps of~$Z$,
the main aim is to show that the likelihood ratio $R_n(Z_{\{1,n\}},\t_{\{1,n\}})$
defined in~\Ref{LR-1} is close to~$1$ with high probability.
First, defining
\eqs  
\lefteqn{\hG^J(x\ui,z,t,y)}\\
   &:=& \ex^{(x\ui,z)}\Bl \exp \Blb -\int_0^t \{q_N(x_N\ui(u), z) - q(z)\}\,du \Brb
                  \frac{q_N^J(y,z)}{q^{J}(z)} \bone\{x_N\ui(t) = y\}  \Br,
\ens
for $y \in N^{-1}\Z_+^{d_1}$, we can express the ratio
\[
    V_{n+1}(z_{\{1,n+1\}},t_{\{1,n+1\}})
         \Def \frac{R_{n+1}(z_{\{1,n+1\}},t_{\{1,n+1\}})}{R_n(z_{\{1,n\}},t_{\{1,n\}})}
\]
as
\eq\label{single-ratio}
   V_{n+1}(z_{\{1,n+1\}},t_{\{1,n+1\}}) \Eq \ex_n\{ \tG^{J_{n+1}}(Y,z_{n+1},t_{n+1}-t_n) \},
\en
where~$\ex_n$ denotes expectation with respect to the measure
with probabilities~$p_n(y)$ given by
\[
  \frac{\ex^0\Bl \hG(x_N\ui(t_{n-1}),z_{n-1},t_n-t_{n-1},y)
            \prod_{l=0}^{n-2} \tG^{J_{l+1}}(x_N\ui(t_l),z_l,t_{l+1}-t_l) \Br}
     {R_n(z_{\{1,n\}},t_{\{1,n\}})},
\]
for $y \in N^{-1}\Z_+^{d_1}$.  Now, defining the $\s$-fields $\Sigma_n := \s(Z_{\{1,n\}},\t_{\{1,n\}})$,
the process $(R_n(Z_{\{1,n\}},\t_{\{1,n\}}),\Sigma_n,\,n\ge0)$, being a likelihood ratio, is a martingale with
expectation~$1$.  We wish to show that it stays close to its expectation with high probability.

First, we consider the process~$x_N$ obtained by replacing $\bg(x)$ with~$\bg(x_0)$ in the transition rates
for jumps $J \in \jj_2$, whenever $|x-x_0| > \th_N$, yielding a new process~$x_{N,\th}$;
the quantity $\th_N \le \r_2$ is yet to be determined.  We then conduct the whole analysis for~$x_{N,\th}$.
Observe that, in~\Ref{single-ratio}, the quantity $\tG^{J_{n+1}}(Y,z_{n+1},t_{n+1}-t_n)$ is, from
its definition~\Ref{tilde-G-def}, itself an expectation, and that, for the process~$x_{N,\th}$,
the quantity within the expectation is itself close to~$1$.  To see this, let
$Q^* := \max_{J\in\jj_2}\{\|D\bg^J\|_{\r_2}/b_*^J\}$;  then
\eqs
    \lefteqn{\Blm\exp \Blb -\int_0^t \{q_N(x_{N,\th}(u), z) - q(z)\}\,du \Brb
                  \frac{q_N^J(x_{N,\th}(t),z)}{q^{J}(z)} - 1\Brm} \\[1ex]
    &&\Le  \exp\{q(z)tQ^*\th_N\}(1 + Q^*\th_N) - 1 
     \Le Q^*\th_N\{(5/4)q(z)t e^{q(z)t/4} + 1\},
\ens
if also $\th_N \le 1/\{4Q^*\}$.  Hence, for~$x_{N,\th}$,
\eqa
   \lefteqn{\ex\{(V_{n+1}(Z_{\{1,n+1\}},\t_{\{1,n+1\}}) - 1)^2 \giv \Sigma_n\}}\non\\
   &\le& \{Q^*\th_N\}^2 \int_0^\infty q(z_n) e^{-q(z_n)t} \{(5/4)q(z_n)t e^{q(z_n)t/4} + 1\}^2 \,dt \non\\
   &\le& 27 \{Q^*\th_N\}^2. \label{variance-inequality}
\ena
Writing $\ps_N := 27 \{Q^*\th_N\}^2$ and $R_r := R_r(Z_{\{1,r\}},\t_{\{1,r\}})$,
this gives
\eqs
    \ex^0\{(R_n-1)^2\} &=& \ex^0\{(R_n-R_{n-1})^2\} + \ex^0\{(R_{n-1}-1)^2\} \\
              &\le& \ps_N\ex^0 R_{n-1}^2 + \ex^0\{(R_{n-1}-1)^2\} \\
    &=& \ex^0\{(R_{n-1}-1)^2\}(1+\ps_N) + \ps_N \Le (1+\ps_N)^n - 1.
\ens
In consequence, for the process~$x_{N,\th}$, if $n\ps_N \le 1$,
\eq\label{variance-bnd}
  \ex^0\{(R_n(Z_{\{1,n\}},\t_{\{1,n\}}) - 1)^2\} \Le ne\ps_N = 27ne \{Q^*\th_N\}^2.
\en
Now the total variation distance~$\dtv$ between probability measures $P_1$ and~$P_2$ on
a measurable space $(S,\ff)$ can be expressed as
\eqa
    \dtv(P_1,P_2) &:=& \sup_{A\in\ff}|P_1(A) - P_2(A)|
           \Eq \frac12\int_S \Blm R_{12}  - 1 \Brm \, dP_2 \non\\
      &=& -\int_S \min\Blb 0, R_{12} - 1 \Brb \, dP_2 ,\label{AB-dtv-def}
\ena
where $R_{12} := \frac{dP_1}{dP_2}$.
In view of~\Ref{variance-bnd}, it thus follows that, for~$x_{N,\th}$, $\dtv\uNt = O(n(N)^{1/2}\th_N)$.
By Thorisson~(2000, Chapter~3, Theorem~7.3 and (8.19)), this also implies that
the process~$Nx_{N,\th}$ and the branching process~$Z$ can be realized on the
same probability space in such a way that their paths coincide up to the first~$n(N)$ jumps,
except on an event of probability of order $O(n(N)^{1/2}\th_N)$.

Now fix $k > 0$, to be specified later, and,
for $m(N) :=  |Z_0| + J^*n(N)$, define
\[
    \th_N\ui \Def \tfrac87\Bigl\{\cst_1\{|x_{N}\ui(0) - x_0\ui| + k\log N\|A\|_{\r_2}(m(N)/N)\} + \h_{N1}'(k)\Bigr\}
\]
and $\th_N\ut := N^{-1}m(N)$; set $\th_N := \th_N\ui + \th_N\ut$.
Note that,
with $n(N) = O(N^{1-\a})$ for $1/3 < \a < 1$, this choice of~$\th_N$
satisfies $\th_N \le 1/\{4Q^*\}$ for all~$N$ large enough, and that the total variation distance
$\dtv\uNt$ is of small order $O\{n(N)^{1/2}(N^{-1}n(N)\log N + N^{-\a})\}$.
Now $x_N$ and~$x_{N,\th}$ can be coupled by running their paths identically until
$\t_N(\th_N) := \inf\{t>0\colon\,|x_{N,\th}(t)- x_0| > \th_N\}$.
So, for this choice of~$\th_N$, let
\[
    \s_N\ui \Def \inf\{t>0\colon\,|x_{N,\th}\ui(t)- x_0\ui| > \th_N\ui\};\quad
    \s_N\ut \Def \inf\{t>0\colon\,|x_{N,\th}\ut(t)| > \th_N\ut\}.
\]
If, for $n(N) := k_0N^{1-\a}$, we can show that
$\pr[\s_N\ui \wedge \s_N\ut < \t_{n(N)}\wedge \t_N^x(0)]$ is asymptotically
small as $N\to\infty$, where
$\t_n$ denotes the time of the $n$-th jump of $x_{N,\th}\ut$ and~$\t_N^x(0)$, as
in~\Ref{AB-tau-0-defs}, its time of first hitting~$0$, the lemma will be proved.

It is immediate from the definition of~$\th_N\ut$ that $\s_N\ut \ge \t_{n(N)}$ a.s.  Then,
by Lemma~\ref{close-start},
$ \pr[\{\s_N\ui  < \t_{n(N)}\wedge \t_N^x(0)\}\cap \{\s_N\ui \le k\log N\}] = O(N^{-r})$,
for any $k,r > 0$.  Finally,
\eqs
   \lefteqn{\pr[\{\s_N\ui  < \t_{n(N)}\wedge \t_N^x(0)\}\cap \{\s_N\ui > k\log N\}]}\\
     &&\Le \pr[\{\t_{n(N)} > k\log N\} \cap \{|x_{N,\th}\ut(k\log N)| > 0\}] \\
     &&\Le \dtv\uNt + \pr[0 < W^\bv(k\log N) \le m(N) \exp\{-\b_0k\log N\}],
\ens
where $W^\bv(t) := \bv^T Z(t)e^{-\b_0 t}$.  However, choosing any $k > \b_0^{-1}(1-\a)$, the
latter probability is asymptotically small, because $m(N) = O(N^{1-\a})$ and,
writing $\t^Z(0) := \inf\{t>0\colon\, Z(t) = 0\}$, as in~\Ref{AB-tau-0-defs},
\[
    \{\lim_{t\to\infty}W^\bv(t) = 0\} \Eq \{\t^Z(0) < \infty\}\ \mbox{a.s.},
\]
(Athreya \&~Ney 1972, Chapter~V.7, (27) in Theorem~2).  This proves the lemma.
\end{proof}

\medskip
As a result of Lemma~\ref{phase-1-lem}, for any fixed~$k$, probabilities for the paths of~$Nx_N\ut$
up to the first~$k N^{1-\a}$ jumps can, with only small error, be computed using
the branching process~$Z$ instead.  We complete our treatment of this
phase of development by proving two further lemmas.  The first shows that the 
branching approximation remains accurate until~$t = \t_{N,\a}^x$, defined
in~\Ref{AB-tau-x-def}.
The second shows that~$x_N(\t_{N,\a}^x)$ is close to a point on the solution~$\xi_N$
of~\Ref{deterministic} starting from~$x_{N,0}$,
except on an event $\hE_N$ whose complement has asymptotically negligible probability.  
The proofs are given in the appendix, Section~\ref{3-extra}.

\begin{lem}\label{stopping-at-level}
For $\t_{N,\a}^Z$ defined in~\Ref{AB-tau-Z-def},
let $\n_{N,\a}^Z$ denote the number of jumps made by~$Z$ until time $\t_{N,\a}^Z$,
infinite if $\t_{N,\a}^Z =  \infty$.
Then, under the assumptions of Section~\ref{assumptions}, there are  constants~$k_0$ and~$\th_0$ such that
\[
    \pr^0[ k_0 N^{1-\a} < \n_{N,\a}^Z < \infty] \Le e^{-\th_0 N^{1-\a}}.
\]
\end{lem}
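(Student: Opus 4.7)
My plan is to analyze the embedded discrete-time chain $Z_n := Z(T_n)$ of~$Z$, where $T_n$ denotes the time of the $n$-th jump, and to show that $S_n := \bv^T Z_n$ has a uniform positive drift of at least $c_2 > 0$ per step at every non-absorbing state. A one-sided Azuma--Hoeffding inequality for the associated submartingale then delivers the exponential bound, since the event to be controlled forces the chain to make many jumps without $S_n$ crossing the threshold $N^{1-\a} + \bv^T Z_0$.

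For the drift, observe that $Z_n$ jumps to $z + J\ut$ with probability $q^J(z)/q(z)$, and that the second block of~\Ref{F-structure} (namely $F(x)\ut = B(x)x\ut$) yields, after specializing at $x=x_0$ with $x\ut$ replaced by a variable $z\in\zz$, the identity
\[
   \sjjt \bg^J(x_0)\,z_{s(J)}\,J\ut \Eq B_0\,z.
\]
Combining this with $\bv^T B_0 = \b_0 \bv^T$ gives
\[
    \ex[\,S_{n+1} - S_n \giv Z_n = z\,] \Eq \frac{\b_0 \bv^T z}{q(z)}.
\]
Since $q(z) = \sjjt \bg^J(x_0)z_{s(J)}$ is a linear functional of~$z$ with non-negative coefficients, and since the Perron--Frobenius eigenvector~$\bv$ has strictly positive entries (by irreducibility of~$B_0$), there exists a finite~$c_1$ with $q(z) \le c_1 \bv^T z$ for all $z \in \zz$, so the drift is at least $c_2 := \b_0/c_1 > 0$ at every $z\neq 0$. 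The increments $|S_{n+1}-S_n|$ are uniformly bounded by $M_1 := \|\bv\|_\infty \max_{J\in\jj_2}|J\ut|$.

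With $\s_0 := \inf\{n\colon\, Z_n = 0\}$, the process $M_n := S_{n\wedge\s_0} - c_2(n\wedge\s_0)$ is a submartingale with increments bounded by $M_1 + c_2$. On the event $\{k_0 N^{1-\a} < \n_{N,\a}^Z < \infty\}$, the chain is neither absorbed nor across the threshold by step~$k_0 N^{1-\a}$, so $\s_0 > k_0 N^{1-\a}$ and $S_{k_0 N^{1-\a}} < N^{1-\a} + \bv^T Z_0$. Taking $k_0 := 4/c_2$ and $N$ large enough that $\bv^T Z_0 \le N^{1-\a}$, we deduce $M_{k_0 N^{1-\a}} - M_0 \le -\tfrac12 c_2 k_0 N^{1-\a}$, and the one-sided Azuma--Hoeffding inequality for submartingales with bounded increments then yields a bound of the form $\exp(-\th_0 N^{1-\a})$ for a suitable $\th_0 > 0$ depending only on $c_2$, $M_1$ and~$k_0$.

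The one delicate point is that the positive drift fails at the absorbing state $z=0$; stopping at~$\s_0$ circumvents this cleanly, because on the event of interest extinction cannot have occurred before step~$k_0 N^{1-\a}$ (otherwise $\n_{N,\a}^Z = \infty$), so the stopped and unstopped processes agree up to that step.
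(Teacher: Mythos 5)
Your argument is correct and matches the paper's in essence: both exploit the fact that $\bv^T Z_n$ gains a uniformly positive amount per jump of the embedded chain (equivalently, that it is a submartingale with drift bounded away from zero off the absorbing state), and both conclude with an exponential lower-tail bound over $k_0N^{1-\a}$ steps. The only difference is technical: the paper implements the concentration step as a direct Chernoff bound, showing that the per-type moment generating functions $\f_i(\th) = \ex\{e^{-\th(\bv^T Z_1 - \bv_i)}\giv Z_0 = \e^i\}$ are strictly less than $1$ for small $\th>0$ and multiplying these one-step bounds along the path (with the indicator of non-extinction playing the role of your stopping at~$\s_0$), whereas you quantify the drift explicitly and invoke Azuma--Hoeffding; the two routes are interchangeable here because the increments $\bv^T J\ut$ are bounded.
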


\medskip

\begin{lem}\label{phase-1-final}
Suppose that $1/3 < \a < 1/2$. Then there is
a $\g > 0$ and an event~$\hE_N$
satisfying $\lim_{N\to\infty} \pr^0[\hE_N^c \cap \{\t_{N,\a}^x < \infty\}] = 0$ such that,
on the event $\hE_N \cap \{\t_{N,\a}^x < \infty\}$, we have
\[
   |x_N\ut(\t_{N,\a}^x) - \xi_N\ut(t_{N,\a}^\xi)| \ =\ O(N^{-\a-\g}),
\]
and
\[
   |x_N\ui(\t_{N,\a}^x) - \xi_N\ui(t_{N,\a}^\xi)| \ =\ O(N^{-\a}\log N).
\]
\end{lem}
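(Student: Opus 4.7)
The plan is to handle the two blocks of coordinates separately. For $x_N\ut$, I would combine the branching coupling of Lemma~\ref{phase-1-lem} with a quantitative branching strong law from the appendix, matched on the deterministic side by the third bound of Lemma~\ref{d-lem-1} and the Perron--Frobenius expansion of $e^{B_0 t}Z_0$. For $x_N\ui$, I would pass to the difference $e(t):=x_N\ui(t)-\xi_N\ui(t)$ in integral form, exploiting that both processes start from the same $x_{N,0}\ui$, so that the leading linear term $e^{Ct}(x_{N,0}\ui-x_0\ui)$ cancels and only small residual contributions remain. Concretely, I take $\hE_N$ as the intersection of: (a)~the coupling event from Lemma~\ref{phase-1-lem} with $k$ larger than the $k_0$ of Lemma~\ref{stopping-at-level}, giving $Nx_N\ut(t)=Z(t)$ throughout $[0,\t_{N,\a}^x]$ and $\t_{N,\a}^x=\t_{N,\a}^Z$; (b)~the event $E_N(k)$ of Lemma~\ref{MG-control}, supplying $|m_N|=O(N^{-1/2}(\log N)^{3/2})$ and validating Lemma~\ref{close-start}; (c)~a quantitative branching event $|e^{-\b_0 t}Z(t)-W\bu|\le e^{-\g_0 t}$ for $T_0\le t\le\t_{N,\a}^Z$ with some $\g_0>0$, together with $\sup_{u\le\t_{N,\a}^Z}e^{-\b_0 u}|Z(u)|\le K$; and (d)~$|\log W|$ bounded by a slowly growing function of $N$, so that $|\t_{N,\a}^Z-t_{N,\a}^\xi|=\b_0^{-1}|\log(\bv^T Z_0)-\log W|+o(1)$ is at most of logarithmic order. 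Each complement has probability tending to zero on $\{\t_{N,\a}^x<\infty\}$.

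For the $\ut$-component, combine the exact identity $\bv^T Z(\t_{N,\a}^Z)=N^{1-\a}+\bv^T Z_0$ with event~(c) and $\bv^T\bu=1$ to get $We^{\b_0\t_{N,\a}^Z}=N^{1-\a}+O(N^{1-\a-\g'})$ for some $\g'>0$, whence $Nx_N\ut(\t_{N,\a}^x)=Z(\t_{N,\a}^Z)=N^{1-\a}\bu+O(N^{1-\a-\g'})$. On the deterministic side, applying the third bound of Lemma~\ref{d-lem-1} at $u=t=t_{N,\a}^\xi$ with $\e\ui=N^{-5/12}$ and $\e\ut=|Z_0|/N$ gives
\[
  |\xi_N\ut(t_{N,\a}^\xi)-e^{B_0 t_{N,\a}^\xi}\xi_N\ut(0)|\Eq O(N^{-\a-\min(5/12,\a)}\log N),
\]
which is $O(N^{-\a-\eta})$ for some $\eta>0$ because $\min(5/12,\a)>1/3$ whenever $\a>1/3$. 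The Perron--Frobenius expansion $e^{B_0 t}Z_0=e^{\b_0 t}(\bv^T Z_0)\bu+O(e^{\b_0' t})$ with $\b_0'<\b_0$ then yields $\xi_N\ut(t_{N,\a}^\xi)=N^{-\a}\bu+O(N^{-\a-\g''})$ for some $\g''>0$. Subtraction proves the first conclusion with $\g=\min(\g',\eta,\g'')$.

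For the $\ui$-component, subtract~\Ref{3a-d} (for $\xi_N\ui$) from~\Ref{1-new} (for $x_N\ui$) to obtain
\[
  e(t)\Eq\int_0^t e^{C(t-u)}\bigl\{A(x_N)x_N\ut-A(\xi_N)\xi_N\ut+\tc(x_N\ui)-\tc(\xi_N\ui)\bigr\}(u)\,du+M(t),
\]
where $|M(t)|\le\h_{N1}'(k)=O(N^{-1/2}(\log N)^{5/2})$ on $E_N(k)$. Using $|\xi_N\ut(u)|\le k\ut e^{\b_0 u}/N$ from Lemma~\ref{d-lem-1} and $|x_N\ut(u)|\le K e^{\b_0 u}/N$ from event~(c), the $A$-part of the integrand is $O(e^{\b_0 u}/N)$ with integral at $t=\t_{N,\a}^x$ of order $O(N^{-\a})$; the $\tc$-part, controlled by~\Ref{deriv-bnd}, contributes at most a Gronwall factor since $|x_N\ui(u)-x_0\ui|$ and $|\xi_N\ui(u)-x_0\ui|$ are uniformly small on the interval by Lemma~\ref{close-start} and Lemma~\ref{d-lem-1}. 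Hence $|e(\t_{N,\a}^x)|=O(N^{-\a})$. Since $|\t_{N,\a}^x-t_{N,\a}^\xi|=O(\log N)$ on $\hE_N$ and $|\dot\xi_N\ui|=O(N^{-\a})$ on the relevant range, $|\xi_N\ui(\t_{N,\a}^x)-\xi_N\ui(t_{N,\a}^\xi)|=O(N^{-\a}\log N)$, and triangulating against~$e$ gives the second conclusion.

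The main obstacle is event~(c): the rate $\g_0>0$ in $|e^{-\b_0 t}Z(t)-W\bu|\le e^{-\g_0 t}$ must be produced on a high-probability event and hold uniformly for~$t$ up to a random stopping time of order $\b_0^{-1}(1-\a)\log N$. Such a rate can only come from the spectral gap $\b_0-\b_0'$ of $B_0^T$, applied through second-moment bounds for the vector martingale $e^{-B_0 t}Z(t)$ together with an appropriate maximal inequality; these branching-process estimates are precisely what the paper defers to the appendix. The restriction $\a>1/3$ enters precisely through the requirement that $\min(5/12,\a)$ exceed~$1/3$, leaving room for a positive exponent $\g$ in the first bound.
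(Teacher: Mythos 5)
Your argument for the second components is essentially the paper's own: couple $Nx_N\ut$ to~$Z$ via Lemmas~\ref{phase-1-lem} and~\ref{stopping-at-level}, evaluate $Z(\t_{N,\a}^Z)$ through a quantitative rate for $Z(t)e^{-\b_0t}\to W\bu$, and match against $\xi_N\ut(t_{N,\a}^\xi)$ using the third bound of Lemma~\ref{d-lem-1} together with Perron--Frobenius. Your event~(c) is precisely the paper's claim~\Ref{branching-claim}, proved in the same appendix section from the martingale representation~\Ref{W-martingale} via second moments and Doob's inequality, so the one estimate you leave unproved is correctly identified and correctly diagnosed (spectral gap plus a maximal inequality). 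For the first components you take a genuinely different route: the paper never forms the difference $x_N\ui-\xi_N\ui$, but instead bounds $|x_N\ui(\t_{N,\a}^x)-x_0\ui|=O(N^{-\a}\log N)$ by Lemma~\ref{close-start} --- whose $t\|A\|_{\r_2}m/N$ term counts jumps rather than integrating $|x_N\ut|$, and so requires no lower bound on~$W$ --- and $|\xi_N\ui(t_{N,\a}^\xi)-x_{N,0}\ui|=O(N^{-\a})$ by Lemma~\ref{d-lem-1}, then triangulates through~$x_0\ui$. Your Gronwall route works, but your step $\int_0^{\t_{N,\a}^x}e^{\b_0u}N^{-1}\,du=O(N^{-\a})$ tacitly uses $e^{\b_0\t_{N,\a}^x}=O(W^{-1}N^{1-\a})$ with $W^{-1}=O(\log N)$, so event~(d) must be taken with, say, $|\log W|\le\log\log N$; this is harmless, since $\pr[0<W<\d]\to0$ as $\d\to0$, but it is a constraint the jump-counting argument avoids. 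One misstatement: the restriction $\a>1/3$ is not needed to make $\min\{5/12,\a\}>1/3$ (any positive value of that minimum leaves room for a positive~$\g$ there); it is needed for the total variation coupling of Lemma~\ref{phase-1-lem}, where the error $O(n(N)^{1/2}\th_N)$ with $n(N)\asymp N^{1-\a}$ and $\th_N\asymp N^{-\a}\log N$ is small only when $\a>1/3$.
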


\medskip

We summarize the results of this section in the following proposition.
For use in the sections to come, we specialize to $\a = 5/12$.

\begin{prop}\label{phase-1-conc}
Suppose that $|x_N\ui(0) - x_0\ui| \le N^{-5/12}$ and that $Nx_N\ut(0) = Z_0$,
for some fixed~$Z_0$. Define $\t^Z(0)$ as in~\Ref{AB-tau-0-defs}, and
$\t_{N*}^Z$, $\t_{N*}^x$ and~$t_{N*}^\xi$ as in~\Ref{AB-tau-star-defs}.
Then, under the assumptions of Section~\ref{assumptions}, it is possible to couple
the paths of~$Nx_N\ut$ and of the branching process~$Z$ in such a way that,
except on an event of asymptotically negligible probability,
they are identical until time~$\min\{\t^Z(0),\t_{N*}^Z\}$,
when, in particular, $\t_{N*}^Z = \t_{N*}^x$.
Furthermore,
there is a $\g > 0$ and constants $\bk\ui$ and~$\bk\ut$ such that, if $\t_{N*}^x < \infty$,
\eqs
   |x_N\ui(\t_{N*}^x) - \xi_N\ui(t_{N*}^\xi)| &\le& \bk\ui N^{-5/12}\log N; \\
   |x_N\ut(\t_{N*}^x) - \xi_N\ut(t_{N*}^\xi)| &\le& \bk\ut N^{-5/12-\g},
\ens
and
\[
    \t_{N*}^x \Eq \b_0^{-1}\{(1-\a)\log N - \log W\} + O(N^{-7/48}) ,
\]
except on an event~$\hE_N^c$ of negligible probability, where~$\xi_N$ is the
solution to the deterministic equation starting with $\xi_N(0) = x_N(0)$.
\end{prop}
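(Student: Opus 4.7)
The plan is to assemble the three lemmas of this section into the proposition. First, apply Lemma~\ref{phase-1-lem} with $\a=5/12$, so that the laws of the first $k_0 N^{7/12}$ jumps of $Nx_N\ut$ and of $Z$ are at vanishing total variation distance. As in the proof of that lemma, Thorisson's theorem lets us realize $Nx_N\ut$ and $Z$ on one probability space so that their paths coincide on the first $k_0 N^{7/12}$ jumps, except on an event $E_1$ of probability $o(1)$. Next, invoke Lemma~\ref{stopping-at-level} with $\a=5/12$: outside an event $E_2$ of probability $e^{-\th_0 N^{7/12}}$, if $\t_{N*}^Z<\infty$ then $Z$ performs at most $k_0 N^{7/12}$ jumps before $\t_{N*}^Z$. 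With $k_0$ chosen consistently in Lemma~\ref{phase-1-lem}, on $(E_1\cup E_2)^c$ the two paths agree throughout $[0,\min\{\t^Z(0),\t_{N*}^Z\}]$. Since $\bv^T Nx_N\ut(t)=\bv^T Z(t)$ on this interval, the stopping times defined in \Ref{AB-tau-Z-def} and \Ref{AB-tau-x-def} satisfy $\t_{N*}^Z=\t_{N*}^x$ (both finite and equal, or both infinite).

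The estimates
\[
   |x_N\ui(\t_{N*}^x)-\xi_N\ui(t_{N*}^\xi)|\Le \bk\ui N^{-5/12}\log N,\qquad
   |x_N\ut(\t_{N*}^x)-\xi_N\ut(t_{N*}^\xi)|\Le \bk\ut N^{-5/12-\g}
\]
are obtained by applying Lemma~\ref{phase-1-final} directly with $\a=5/12$; this yields the event $\hE_N$ whose complement has asymptotically negligible probability, and supplies the exponent $\g>0$.

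For the asymptotic formula for $\t_{N*}^x$, I would work with the coupled branching process $Z$ and use the martingale $W^\bv(t)\Def\bv^T Z(t)e^{-\b_0 t}$ established (together with its rate of convergence) in the appendix. By definition of $\t_{N*}^Z$,
\[
    W^\bv(\t_{N*}^Z)\,e^{\b_0\t_{N*}^Z}\Eq N^{7/12}+\bv^T Z_0,
\]
so, on the non-extinction event $\{W>0\}$, taking logarithms gives
\[
   \b_0\t_{N*}^Z+\log W\Eq \tfrac{7}{12}\log N+\log\!\left(1+\tfrac{W^\bv(\t_{N*}^Z)-W}{W}\right)+O(N^{-7/12}).
\]
Since $\t_{N*}^x=\t_{N*}^Z$ on the coupling event, the error in the claimed identity is thus governed by the size of $W^\bv(\t_{N*}^Z)-W$, which the appendix controls by an $L^2$--type rate-of-convergence estimate for the branching-process martingale, valid under the finite second moments of the offspring distribution guaranteed by the finiteness of $\jj_2$ and continuity of the $\bg^J$.

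The main obstacle is the last step: extracting, from the branching approximation together with the martingale convergence $W^\bv(t)\to W$, an explicit $N^{-7/48}$ bound on $\t_{N*}^x-\b_0^{-1}\{(7/12)\log N-\log W\}$. The delicate point is to pick a rate of decay for $\ex|W^\bv(t)-W|^2$ that, when evaluated at $t=\t_{N*}^Z\sim\b_0^{-1}(7/12)\log N$, translates via a Markov inequality into an error of the form $N^{-7/48}$ outside a further set of negligible probability; this balances the logarithmic growth of $\t_{N*}^Z$ against the exponential decay of $|W^\bv(t)-W|$, and explains the slightly unusual exponent that appears in the statement.
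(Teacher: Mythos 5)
Your proposal is correct and follows essentially the same route as the paper, which presents this proposition precisely as the assembly of Lemmas~\ref{phase-1-lem}, \ref{stopping-at-level} and~\ref{phase-1-final} with $\a=5/12$; the "obstacle" you identify in the last step is resolved inside the paper's proof of Lemma~\ref{phase-1-final} exactly as you outline, via the Doob-type bound \Ref{branching-claim} applied on the event $E_1(t_{N,\a,\e},\half N^\e)\cap\{\half N^{-\e}<W<\half N^\e\}$ with $t_{N,\a,\e}=\b_0^{-1}(1-\a-\e)\log N$, which yields the relative error $N^{-(1-\a)/4}=N^{-7/48}$ in \Ref{AB-tau-approx}--\Ref{AB-tau-N-alpha-asymp}. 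The only cosmetic point is that your event $E_2$ should also cover paths that survive beyond $k_0N^{7/12}$ jumps without reaching the level and only die later, but the Chernoff bound in the proof of Lemma~\ref{stopping-at-level} in fact controls exactly that larger event.
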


\section{Intermediate growth}\label{phase-2}
\setcounter{equation}{0}
In the previous section, it has been shown that, on $\{\t_{N*}^x < \infty\}\cap \hE_N$, the point~$x_N(\t_{N*}^x)$
is close to~$\xi_N(t_{N*}^\xi)$, where~$\xi_N$ is the solution to~\Ref{deterministic} with
initial condition $\xi_N(0) = x_{N,0}$, and $t_{N*}^\xi$ is a non-random time defined in~\Ref{AB-tau-star-defs}.
We now show that $x_N(\t_{N*}^x + t)$ stays uniformly close
to $\xi_N(t_{N*}^\xi+t)$ for all $0 \le t \le t_0(\d',\e_N)$, for a suitably chosen~$\d' > 0$,
not depending on~$N$; here, and throughout the section, we define
\eq\label{eps-N-def}
   \e_N \Def |x_N\ut(\t_{N*}^x)| \ \asymp\ N^{-5/12},
\en
with the last relation and the inequality $\e_N \ge N^{-5/12}/|\bv|$ justified in view of
the definition~\Ref{AB-tau-star-defs} of~$\t_{N*}^x$.
We also show that~$\d'$ can be chosen so that all the components of $\xi_N(t_{N*}^\xi + t_0(\d',\e_N))$
are bounded away fron zero.  Hence, after this time,
Kurtz~(1970, Theorem~(3.1)) can be used to continue the approximation
of~$x_N$ by~$\xi_N$ along a non-degenerate path, as stated in Theorem~\ref{combined}.

We start by using the Markov property to continue from~$\t_{N*}^x$.
Let $x_1  := x_N(\t_{N*}^x)$, that is 
$x_1^{(1)}=x^{(1)}_N(\t_{N*}^x)$, $x_1^{(2)}=x^{(2)}_N(\t_{N*}^x)$, 

and define $\tx_N(t) := x_N(\t_{N*}^x + t)$.
Note that, from Lemma~\ref{d-lem-1},
\eqa
  |\xi_N\ui(t_{N*}^\xi) - x_0\ui| &\le& k\ui\{|x_{N,0}\ui - x_0\ui| + (|Z_0|/\bv^T Z_0)N^{-5/12}\} \non\\
      &\le& \bk\uh N^{-5/12},  \label{starting}
\ena
with $\bk\uh := k\ui(1 + \max_{1\le i\le d}\{1/\bv_i\})$.
Then we can write
\eqs
  \tx_N\ui(t) &=& x_1\ui + \int_0^t \{A(\tx_N(u)) \tx_N\ut(u) + c(\tx_N\ui(u))\}\,du + \tm_{N}\ui(t); \\
  \tx_N\ut(t) &=& x_1\ut + \int_0^t B(\tx_N(u)) \tx_N\ut(u)\,du + \tm_{N}\ut(t),
\ens
or, using variation of constants,
\eqa
    \tx_N\ui(t) &=& x_0\ui + e^{Ct}(x_1\ui - x_0\ui) \non\\[1ex]
      &&\mbox{} + \int_0^t e^{C(t-u)}\{ A(\tx_N(u)) \tx_N\ut(u) + \tc(\tx_N\ui(u))\}\,du \non\\
      &&\quad\mbox{} + \tm_{N}\ui(t) + C\int_0^t e^{C(t-u)} \tm_N\ui(u)\,du;  \label{1}\\
      \tx_N\ut(t) &=& x_1\ut + \int_0^t B(\tx_N(u)) \tx_N\ut(u)\,du + \tm_{N}\ut(t)\non\\
        &=& e^{B_0t}x_1\ut + \int_0^t e^{B_0(t-u)} \{B(\tx_N(u) - B_0\} \tx_N\ut(u)\,du \non\\
         &&\quad\mbox{}  + \tm_{N}\ut(t) + B_0\int_0^t e^{B_0(t-u)} \tm_N\ui(u)\,du, \label{2}
\ena
where $\tm_N(t) := m_N(\t_{N*}^x + t) - m_N(\t_{N*}^x)$, and~$m_N$ is as in~\Ref{AB-mN-def}.
The deterministic counterparts of \Ref{1} and~\Ref{2}
have been given previously in \Ref{3a-d} and~\Ref{4-d}.  We first use the comparison between
these pairs of equations to show that~$\tx_N$ stays close to~$\txi_N$, where~$\txi_N$
solves~\Ref{deterministic} with $\txi_N(0) = \tx_N(0) = x_1$.  Afterwards, we can use
Lemma~\ref{d-lem-1} to show that~$\txi_N(\cdot)$ stays uniformly close to~$\xi_N(t_{N*}^\xi+\cdot)$
in the appropriate time interval,
and that, at the end of this interval, $\xi_N$ is away from the boundary.
In preparation for the next result,
taking~$\d_0$ as in Lemma~\ref{d-lem-1},
note that $0 < t_0(\d_0,\e_N) = \b_0^{-1}\log(\d_0/\e_N) \le k_1\log N$ for a suitable choice of~$k_1$
and for all~$N$ sufficiently large.

\begin{lem}\label{phase-2-lem-new}
 There exist $\d_1 > 0$ and an event~$E_N$ with $\pr[E_N^c] = O(N^{-r})$ for any~$r > 0$ such that,
on~$E_N$, for all~$\d \le \d_1$,
\eqs
    \sup_{0\le t\le t_0(\d_0,\e_N)}|\tx_N\ui(t) - \txi_N\ui(t)| &\le& k\ui(\d_1)  N^{-1/12 + \chi(\d)};\\
   \sup_{0\le t\le t_0(\d_0,\e_N)}\{|\tx_N\ut(t) - \txi_N\ut(t)|/|\txi_N\ut(t)|\}
                 &\le& k\ut(\d_1)  N^{-1/12 + \chi(\d)},
\ens
for some $ k\ui(\d_1), k\ut(\d_1)$ and $\chi(\d) > 0$, where~$\lim_{\d\to0}\chi(\d) = 0$.
\end{lem}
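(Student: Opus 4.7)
The plan is to compare $\tx_N$ and $\txi_N$ via their variation-of-constants representations \Ref{1}--\Ref{2} and~\Ref{3a-d}--\Ref{4-d}, combining the martingale bound of Lemma~\ref{MG-control} with a Gronwall-type bootstrap. Since $t_0(\d_0,\e_N) = \b_0^{-1}\log(\d_0/\e_N) \le k_1\log N$ for a suitable~$k_1$ and all large enough~$N$, Lemma~\ref{MG-control} supplies an event~$E_N$ with $\pr[E_N^c] = O(N^{-r})$ for any~$r>0$, on which $\sup_{0\le t\le t_0(\d_0,\e_N)}|\tm_N(t)| \Le \h_N := \h_{N1}(k_1) = O(N^{-1/2}(\log N)^{3/2})$. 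Integrating by parts against $e^{C(t-u)}$ (bounded by~$\cst_1$) and $e^{B_0(t-u)}$ (bounded by~$\cst_2 e^{\b_0(t-u)}$) yields, on~$E_N$, a martingale contribution to~\Ref{1} of order~$O(\h_N)$ and to~\Ref{2} of order~$O(\h_N e^{\b_0 t})$.

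Writing $D(t) := \tx_N(t) - \txi_N(t)$ and subtracting the paired equations, the second component satisfies
\[
   D\ut(t) \Eq \int_0^t e^{B_0(t-u)}\bigl[\{B(\tx_N(u)) - B_0\}D\ut(u) + \{B(\tx_N(u)) - B(\txi_N(u))\}\txi_N\ut(u)\bigr]\,du + r\ut(t),
\]
with $|r\ut(t)| \Le c\h_N e^{\b_0 t}$ on~$E_N$; the analogous equation for $D\ui$ has kernel~$e^{C(t-u)}$ and involves~$A(\tx_N)D\ut$, $\{A(\tx_N) - A(\txi_N)\}\txi_N\ut$, and, via~\Ref{deriv-bnd}, a quadratic term bounded by~$\kkkc|D\ui|\{|D\ui| + |\tx_N\ui - x_0\ui|\}$. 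Using the estimate $|\txi_N\ut(u)| \Le k\ut \e_N e^{\b_0 u}$ from Lemma~\ref{d-lem-1}, rescaling by~$e^{-\b_0 t}$, and coupling the two inequalities through the single scalar quantity $M(t) := |D\ui(t)| + \e_N^{-1}e^{-\b_0 t}|D\ut(t)|$, I would derive a Gronwall inequality of the form $M(t) \Le C\h_N/\e_N + \int_0^t \k(u) M(u)\,du$, in which $\k(u)$ is controlled by a constant multiple of $\|DB\|_{\r_2}\{|\tx_N(u) - x_0| + |\txi_N(u) - x_0|\}$ plus the corresponding contributions from $A$ and $\tc$, all uniformly of order~$O(\d)$ on the bootstrap neighbourhood $\{|\tx_N - x_0| \Le \d\}$.

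Applying Gronwall to $M$ over an interval of length at most $\b_0^{-1}\log N + O(1)$ then yields $M(t) = O(\h_N/\e_N) \cdot \exp(c\d\b_0^{-1}\log N) = O(N^{-1/12 + \chi(\d)})$, with $\chi(\d) = c\d/\b_0 \to 0$ as $\d\to 0$, giving the two stated bounds upon unpacking~$M$. Closing the bootstrap requires checking that $|\tx_N(t) - x_0| \Le \d$ throughout $[0,t_0(\d_0,\e_N)] \cap E_N$; Lemma~\ref{d-lem-1} already controls $|\txi_N - x_0|$ by a constant multiple of~$\d_0$, while the derived bound on~$D$ is $o(1)$ for~$N$ large, so the bootstrap closes once $\d_1$ is taken strictly smaller than the $\d_0$ of Lemma~\ref{d-lem-1}. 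The main obstacle, and the source of the exponent loss~$\chi(\d)$, is the~$B(\tx_N) - B_0$ factor in the Gronwall kernel: it is only of size~$O(\d)$, but when integrated against $e^{B_0(t-u)}$ over a horizon of length~$\b_0^{-1}\log N$ it produces an unavoidable polynomial factor~$N^{O(\d)}$. The base exponent~$-1/12$ is itself fixed by the calibration $\e_N \asymp N^{-5/12}$: the raw martingale-to-scale ratio $\h_N/\e_N$ is exactly~$O(N^{-1/12 + o(1)})$, and no further improvement is possible without sharpening the martingale estimate.
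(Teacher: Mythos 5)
Your overall architecture --- the martingale event from Lemma~\ref{MG-control} over a horizon of length $O(\log N)$, variation of constants for both components, a Gronwall estimate whose exponent is $O(\d\log N)$, the calibration of the base exponent as $\h_{N1}/\e_N \asymp N^{-1/12}$, and a bootstrap closure --- is exactly the paper's. But there is a genuine gap at the one point where the argument is delicate: the claim that the Gronwall kernel for your combined quantity $M(t) = |D\ui(t)| + \e_N^{-1}e^{-\b_0 t}|D\ut(t)|$ is uniformly $O(\d)$ fails for the cross term $\{B(\tx_N(u)) - B(\txi_N(u))\}\txi_N\ut(u)$ appearing in your own display. Bounding $\|B(\tx_N(u))-B(\txi_N(u))\| \le \|DB\|_{\r_2}|D(u)|$ and $|\txi_N\ut(u)| \le k\ut\e_N e^{\b_0 u}$, this term enters the rescaled second-component inequality (after the $e^{B_0(t-u)}$ kernel and division by $\e_N e^{\b_0 t}$) as $\cst_2\|DB\|_{\r_2}k\ut\int_0^t(|D\ui(u)|+|D\ut(u)|)\,du$. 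The $|D\ut(u)|$ part is harmless, since $|D\ut(u)| \le \e_N e^{\b_0 u}M(u) \le \d M(u)$ on the relevant interval; but the $|D\ui(u)|$ part multiplies $M(u)$ with the fixed coefficient $\cst_2\|DB\|_{\r_2}k\ut$, not $O(\d)$. (Nor can you treat the whole cross term as a known forcing of size $O(\d)\cdot|\txi_N\ut(u)|$: after rescaling that contributes $O(\d\log N)$, which diverges.) A scalar Gronwall with an $O(1)$ kernel over a horizon $\b_0^{-1}\log N$ produces a factor $N^{c}$ for a fixed $c>0$ depending on $\|DB\|_{\r_2}$, which can swamp $N^{-1/12}$.

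The repair is precisely the step the paper pivots on: before running Gronwall on the second component, derive from the first-component equation the pointwise domination $|D\ui(t)| \le k\, e^{\b_0 t}\sup_{u\le t}e^{-\b_0 u}|D\ut(u)| + O(\h_{N1}')$, which is inequality~\Ref{9}. Substituting this into the offending integral converts $\e_N\int_0^t|D\ui(u)|\,du$ into $O(\d)\int_0^t \sup_{v\le u}e^{-\b_0 v}|D\ut(v)|\,dv + O(\e_N \h_{N1}'\log N)$, restoring the $O(\d)$ kernel; the closed Gronwall inequality is then for the scaled second-component error alone, and the first-component bound is recovered afterwards from~\Ref{9}. (Equivalently one could keep a genuine two-component system and exploit that the $D\ut\!\to\!D\ui$ kernel has total mass $O(\d)$ while only the $D\ui\!\to\!D\ut$ kernel is $O(1)$, or reweight $M$; some such asymmetric treatment is unavoidable.) Two smaller omissions: converting the bound on $M$ into the stated relative error requires the lower bound on $|\txi_N\ut(t)|$ coming from~\Ref{matrix-bnds-2} and the third estimate of Lemma~\ref{d-lem-1}; and since the stochastic path has jumps of size $O(N^{-1})$, the bootstrap must be closed via Lemma~\ref{AB-little-lemma} rather than by a continuity argument.
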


\begin{proof}
Define the (random) time
\eqs
 \t_N &:=& \inf\{t > 0\colon\, |\tx_N(t)- x_0| > \r_2\},
\ens
and let~$E_N$ denote the event
\eq \label{good-event}
 \Bigl\{\max_{J\in\jj}\sup_{0\le t\le t_0(\d_0,\e_N)\wedge\t_N}|N^{-1}P^J(NG_N^J(t)) - G_N^J(t)|
                                      \le 2N^{-1/2}\sqrt{k_1} (\log N)^{3/2} \Bigr\},
\en
for~$k_1$ as defined above.
Then, by Lemma~\ref{MG-control}, $\pr[E_N^c] = O(N^{-r})$,
for any $r > 0$, and, on the event~$E_N$,
\eq\label{mN-bnd-2}
   \sup_{0\le t\le t_0(\d_0,\e_N)\wedge\t_N}|\tm_N(t)| \Le  \h_{N1} \Def \h_{N1}(k_1)
           \Eq O(N^{-1/2}(\log N)^{3/2}).
\en
The remaining argument involves careful use of the Gronwall inequality on the event~$E_N$,
to translate the smallness of~$\sup_{0\le t\le t_0(\d_0,\e_N)\wedge\t_N}|\tm_N(t)|$ into
a corresponding closeness of $\tx_N$ and~$\txi_N$ over a large part of this time interval.  The
main difficulty is that the length of the interval tends to
infinity with~$N$.

Taking the difference of \Ref{1} and~\Ref{3a-d}, we find that, on~$E_N$,
\eqa
  \lefteqn{|\tx_N\ui(t) - \txi_N\ui(t)| \Le
           \int_0^t |e^{C(t-u)} \{A(\tx_N(u))\tx_N\ut(u) - A(\txi_N(u))\txi_N\ut(u)\}|\,du}
                  \non \\
   \mbox{} &&\phantom{XXXXX}\qquad + \int_0^t |e^{C(t-u)}\{\tc(\tx_N\ui(u)) - \tc(\txi_N\ui(u))\}| \,du
               + \h_{N1}'\label{1-component-diff} \\
   &\le& \|A\|_{\r_2}\cst_1 \int_0^t |\tx_N\ut(u)-\txi_N\ut(u)|\,du
   \non \\ &&
      +\ \cst_1\int_0^t  |\txi_N\ut(u)| \|DA\|_{\r_2} |\tx_N(u) - \txi_N(u)|\,du \non \\
    &&\ \mbox{}  + \cst_1 \kkkc  \int_0^t |\tx_N\ui(u) - \txi_N\ui(u)|
                 \{|\tx_N\ui(u) - \txi_N\ui(u)| + |\txi_N\ui(u) - x_0\ui|\}\,du + \h_{N1}',
    \non
\ena
for $0\le t\le t_0(\d_0,\e_N)\wedge\t_N$, where $\h_{N1}' := (1 + \cst_1\|C\|k_1\log N)\h_{N1}$.
Writing
\eqs
    d_N\ui(t) &:=& \sup_{0\le u\le t\wedge\t_N}|\tx_N\ui(u) - \txi_N\ui(u)|;\\
    d_N\ut(t) &:=& \sup_{0\le u\le t\wedge\t_N}e^{-\b_0 u}|\tx_N\ut(u) - \txi_N\ut(u)|,
\ens
it thus follows, for $t\le t_0(\d_0,\e_N)\wedge\t_N$ and on~$E_N$, that
\eqa
    d_N\ui(t) &\le&   \cst_1\int_0^t  e^{\b_0 u}d_N\ut(u) \|A\|_{\r_2}\,du \non\\
              &&\mbox{}    + \cst_1 \|DA\|_{\r_2}
         \int_0^t  e^{\b_0u} \D_N\ut(u)(d_N\ui(u) + e^{\b_0u}d_N\ut(u))\,du \non \\
              &&\mbox{}\quad + \cst_1{\kkkc }d_N\ui(t)\int_0^t \{d_N\ui(u) + |\txi_N\ui(u) - x_0\ui|\}\,du
                   + \h_{N1}'.  \label{1-component-diff-2a}
\ena 
Using Lemma~\ref{d-lem-1} to bound~$\D_N\ut(t) := \sup_{0\le u\le t}e^{-\b_0u}|\txi_N\ut(u)|$,
in which we can take $\e\ut = \e_N$ and $\e\ui = (\bk\ui + \bk\uh) N^{-5/12}\log N$ for~$\txi_N(0)$,
because of Proposition~\ref{phase-1-conc}, \Ref{eps-N-def} and~\Ref{starting}, this gives
\eqa
    d_N\ui(t) &\le& \cst_1\b_0^{-1} e^{\b_0 t}\{d_N\ut(t)\|A\|_{\r_2}
    + k\ut \e_N  \|DA\|_{\r_2} (d_N\ui(t) + e^{\b_0t}d_N\ut(t))\}\non\\
   &&\qquad\mbox{}  + \tfrac14 d_N\ui(t)  + \h_{N1}',
     \label{1-component-diff-2}
\ena
for all~$t$ such that
\eq\label{integral-bnd-2}
    \cst_1 \kkkc  \int_0^t d_N\ui(u)\,du \Le 1/8 \quad \mbox{and}\quad
    \cst_1 \kkkc  \int_0^t |\txi_N\ui(u) - x_0\ui|\,du  \Le 1/8.
\en
Observe also that, from Lemma~\ref{d-lem-1},
\[
   \cst_1 \kkkc  \int_0^t |\txi_N\ui(u) - x_0\ui|\,du \Le
           \cst_1 \kkkc  k\ui\b_0^{-1}\{\e\ui\log(1/\e_N) + \d\}
\]
for $t \le t_0(\d,\e_N)$ and for any $\d \le \d_0$, where~$\d_0$ is as in Lemma~\ref{d-lem-1}.
With the above choice of~$\e\ui$ and for any~$\d = \d'$ chosen small enough, smaller than~$\d_0$ if necessary,
the right hand side is smaller than~$1/8$ for all~$N$ large enough.

Now choose $0 < \d_1 \le \min\{\d_0,\d'\}$ such that $\cst_1 k\ut\d_1\b_0^{-1}\|DA\|_{\r_2} \le 1/4$
and $\d_1 \le \r_2/2$,
and consider $t \le t_0(\d_1,\e_N)$ such that~\Ref{integral-bnd-2} is satisfied, and also such that
\eq\label{cond-1-stoch}
     \max\{d_N\ui(t), e^{\b_0t}d_N\ut(t)\} \Le \d_1,
\en
for which, immediately,  $t \le \t_N$ and $e^{\b_0t}\e_N \le \d_1$.
Then it follows from~\Ref{1-component-diff-2} on~$E_N$ that, for such~$t$,
\eq\label{9}
   d_N\ui(t) \Le 2\cst_1\b_0^{-1}e^{\b_0 t}d_N\ut(t)\{\|A\|_{\r_2} + k\ut\d_1\|DA\|_{\r_2} \}
      + 2\h_{N1}'.
\en

We now take the difference of \Ref{2} and~\Ref{4-d}, 
from which it follows on~$E_N$ that, for~$t$ as above,\par
\vbox{
\eqa
   |\tx_N\ut(t) - \txi_N\ut(t)|   
    &\le&   \int_0^t |e^{B_0(t-u)}(B(\tx_N(u)) - B(\txi_N(u)))\tx_N\ut(u)|\,du \non\\
   &&\mbox{} + \int_0^t |e^{B_0(t-u)}(B(\txi_N(u)) - B_0)(\tx_N\ut(u) - \txi_N\ut(u))|\,du \non\\
    &&\quad\ \mbox{}\  + \Blm\tm_{N}\ut(t) + B_0\int_0^t e^{B_0(t-u)} \tm_N\ui(u)\,du,\Brm
                   \label{part-2-diff}
\ena
}
\nin
giving, with $\h_{N1}^* := (1 + \cst_2\|B_0\|/\b_0)\h_{N1}$, and from Lemma~\ref{d-lem-1}
and~\Ref{cond-1-stoch},
\eqa
 d_N\ut(t) &\le& \cst_2\int_0^t d_N\ut(u) \|B(\tx_N(u)) - B(\txi_N(u))\| \,du  \non \\ &&\mbox{}\quad
    + \cst_2\int_0^t k\ut\e_N \|B(\tx_N(u)) - B(\txi_N(u))\| \,du \non\\
  &&\mbox{} \qquad + \cst_2\int_0^t d_N\ut(u)\|B(\txi_N(u)) - B_0\|\,du +  \h_{N1}^* \label{2-component-diff} \\
  &\le& k_{2}\left\{(\d_1 + |x_{1N}\ui - x_0\ui|)\int_0^t d_N\ut(u)\,du \right. \label{previous-1}\\
  &&\phantom{XXXXXXXXXXXXXX}\left.   \mbox{}
      + \e_N \int_0^t d_N\ui(u)\,du \right\} + \h_{N1}^*, \non
\ena
for a suitable constant~$k_{2}$. From~\Ref{9}, we have
\[
   \int_0^t d_N\ui(u)\,du \Le k_{3}e^{\b_0 t}\int_0^t d_N\ut(u)\,du
            + 2 t \h_{N1}',
\]
and, substituting this into~\Ref{previous-1}, we obtain
\eq\label{dN-1-este}
   d_N\ut(t) \Le k_{4}(\d_1 + |x_{1N}\ui - x_0\ui|) \int_0^t d_N\ut(u)\,du
              + \h_{N1}^* + k_{5}\e_N\log(1/\e_N) \h_{N1}',
\en
for constants $k_{4}$, $k_{5}$ and for $t \le t_0(\d_1,\e_N)$. Gronwall's inequality now yields
\[
    d_N\ut(t) \Le k_{6} \h_{N1} \exp\{k_{4}t(\d_1 + |x_{1N}\ui - x_0\ui|)\},
\]
for suitable~$k_{6}$,
and, for~$t = t_0(\d_1,\e_N)$, the right hand side can be made to be of order $O(N^{-1/2 + \chi})$,
for any~$\chi > 0$, by choosing~$\d_1 = \d_1(\chi)$ small enough.  In particular,
choosing~$t =  t_0(\d_1(\chi),\e_N)$ and recalling~\Ref{eps-N-def}, we have
\eq\label{2-tilde-bnd}
   \sup_{0\le u\le t}|\tx_N\ut(u) - \txi_N\ut(u)| \Le e^{\b_0 t}d_N\ut(t) \Eq O(\e_N^{-1}N^{-1/2+\chi})
       \Eq O(N^{-1/12 + \chi}),
\en
on the event~$E_N$, and also, in view of~\Ref{matrix-bnds-2} and the third inequality in Lemma~\ref{d-lem-1},
\[
   \sup_{0\le u\le t}\{|\tx_N\ut(u) - \txi_N\ut(u)|/|\txi_N\ut(u)|\} \Le \e_N^{-1}d_N\ut(t)
       \Eq O(N^{-1/12 + \chi}).
\]
In addition, from~\Ref{9} and~\Ref{2-tilde-bnd}, it follows on~$E_N$ that
\eq\label{AB-1-tilde-bnd}
    \sup_{0\le u\le t}|\tx_N\ui(u) - \txi_N\ui(u)| \ =:\
  d_N\ui(t) \Le k_{7}\{e^{\b_0 t}d_N\ut(t) + \h_{N1}'\}  \Eq O(N^{-1/12 + \chi}).
\en
We now compare the assumed conditions \Ref{integral-bnd-2} and~\Ref{cond-1-stoch},
involving bounds on increasing processes with jumps bounded by $\DDD = N^{-1}\max_{J\in\jj}|J|$,
with the resulting estimates \Ref{2-tilde-bnd} and~\Ref{AB-1-tilde-bnd}.  It then
follows immediately from Lemma~\ref{AB-little-lemma} that both \Ref{2-tilde-bnd} and~\Ref{AB-1-tilde-bnd}
hold on~$E_N$ for all $t \le  t_0(\d_1(\chi),\e_N)$ and for all~$N$ sufficiently large, provided
that $\chi < 1/12$.
\end{proof}

\medskip
It remains first to observe that the solution~$\txi_N$ of the deterministic equations
starting from $\txi_0 := x_1 = x_N(\t_{N*}^x)$ is close to the solution~$\hxi_N$ starting
from~$\hxi_N(0) = \xi_N(t_{N*}^\xi)$, up to the time $t_0(\d_1,\e_N)$,
because their starting points are close enough
on~$\hE_N$, as was shown in Proposition~\ref{phase-1-conc};
from the final statements of Lemma~\ref{d-lem-1}, taking $\e\ui = \bk\ui N^{-5/12}\log N$ and
$\e\ut = \bk\ut N^{-5/12}$, it follows that, for some $\g > 0$,
\eqa
    \sup_{0\le u\le t_0(\d_1,\e\ut)} |\hxi_N\ui(u) - \txi_N\ui(u)| &=& O(N^{-\g});\non\\
    \sup_{0\le u\le t_0(\d_1,\e\ut)} |\hxi_N\ut(u) - \txi_N\ut(u)| &=& O(N^{-\g}).
          \label{AB-close-at-end}
\ena

One final result is needed, to show that continuation using Kurtz~(1970, Theorem~(3.1)) 
represents following the deterministic path along an asymptotically non-degenerate path.
The proof is given in Section~\ref{4-extra}.

\begin{lem}\label{non-degenerate}
 Define
\[
    \hht_N \Def t_{N*}^\xi + t_0(\d',\e_N) \Eq \b_0^{-1}\{\log N + \log(\d'/\bv^T Z_0)\}.
\]
Then, for suitably chosen $\d' \le \d_1$, all the components of $\txi_N(t_0(\d',\e\ut)) = \xi_N(\hht_N)$
are uniformly bounded away from zero for all~$N$ large enough.
\end{lem}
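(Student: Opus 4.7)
The plan is to evaluate $\xi_N(\hht_N) = \xi_N(t_{N*}^\xi + t_0(\d',\e_N))$ by applying Lemma~\ref{d-lem-1} in two successive stages: first on $[0,t_{N*}^\xi]$, to show that $\xi_N(t_{N*}^\xi)$ sits near $x_0\ui$ in its first block and lies essentially along the Perron--Frobenius direction~$\bu$ in its second block; then on the further interval of length $t_0(\d',\e_N) = \b_0^{-1}\log(\d'/\e_N) = O(\log N)$, on which $e^{\b_0 t_0(\d',\e_N)}\e_N = \d'$. By \Ref{AB-close-at-end}, the same conclusion transfers to the random solution~$\txi_N$, whose value at $t_0(\d',\e_N)$ differs from $\xi_N(\hht_N)$ by $O(N^{-\g})$, so it is enough to work with the cleaner, deterministic $\xi_N(\hht_N)$.

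In the first stage I would apply Lemma~\ref{d-lem-1} to $\xi_N$ with $\e\ui = O(N^{-5/12})$ and $\e\ut = |Z_0|/N$. Its first inequality gives $|\xi_N\ui(t_{N*}^\xi) - x_0\ui| = O(N^{-5/12})$. For the second block, its third inequality would be combined with the spectral decomposition
\[
   e^{B_0 t}y \Eq e^{\b_0 t}(\bv^T y)\bu + O\bigl(e^{(\b_0 - \e_*) t}|y|\bigr),
\]
valid for some spectral gap $\e_* > 0$ because $B_0$ is irreducible with simple dominant eigenvalue~$\b_0$ and strictly positive Perron eigenvectors $\bu,\bv$. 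At $t = t_{N*}^\xi$, where $\bv^T e^{B_0 t_{N*}^\xi}x_{N,0}\ut = N^{-5/12}$ by construction, this produces $\xi_N\ut(t_{N*}^\xi) = N^{-5/12}\bu + O(N^{-5/12 - \g})$ for some $\g > 0$; in particular the direction of $\xi_N\ut(t_{N*}^\xi)$ is essentially~$\bu$ and its norm is $\asymp N^{-5/12}$.

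In the second stage I apply Lemma~\ref{d-lem-1} again, now with initial condition $\xi_N(t_{N*}^\xi)$, $\e\ui = O(N^{-5/12})$ and $\e\ut \asymp N^{-5/12}$, and evaluate at $s = t_0(\d',\e_N)$. Its first inequality yields
\[
   |\xi_N\ui(\hht_N) - x_0\ui| \Le k\ui\bigl(O(N^{-5/12}) + \d'\bigr),
\]
which, since $x_0\ui$ is the interior carrying-capacity equilibrium (all components strictly positive), stays in any preassigned neighbourhood of $x_0\ui$ once $\d'$ is chosen below $(2k\ui)^{-1}\min_i x_{0,i}\ui$; hence every first-block component exceeds $\tfrac12\min_i x_{0,i}\ui$ for all large~$N$. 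Its third inequality, combined with the same spectral decomposition at $t_0(\d',\e_N)$, gives
\[
   \xi_N\ut(\hht_N) \Eq \d'\bu + O(\d'^2) + o_N(1)\d',
\]
because $(\bv^T \xi_N\ut(t_{N*}^\xi))e^{\b_0 t_0(\d',\e_N)} = \d'$ up to lower-order corrections. Choosing $\d' \le \d_1$ small enough that the $O(\d'^2)$ contribution falls below $\tfrac12\d'\min_i u_i$, and then~$N$ large enough to absorb the $o_N(1)$ part, gives a uniform lower bound of order $\d'$ on every second-block component, since $\bu > 0$ componentwise by Perron--Frobenius.

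The main obstacle is the directional control of $\xi_N\ut(t_{N*}^\xi)$: without showing that it aligns with~$\bu$, the inner product $\bv^T \xi_N\ut(t_{N*}^\xi)$ could in principle be much smaller than $|\xi_N\ut(t_{N*}^\xi)|$, and the component-wise lower bound on $\xi_N\ut(\hht_N)$ would fail. The spectral-gap decomposition, exploiting the long time window $t_{N*}^\xi = \Theta(\log N)$ over which non-Perron eigenmodes decay relative to the dominant one, is what secures this alignment; the remaining steps are routine bookkeeping with the estimates of Lemma~\ref{d-lem-1}.
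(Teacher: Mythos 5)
Your argument is correct, and it reaches the conclusion by a mechanism that differs from the paper's at the key step. The paper applies Lemma~\ref{d-lem-1} once, over the whole interval $[0,\hht_N]$ with $\e\ut = N^{-1}|Z_0|$, reducing the problem to a componentwise lower bound on $e^{B_0\hht_N}N^{-1}Z_0$; that bound is then obtained \emph{without any spectral gap}, purely from positivity: $e^{B_0}$ is a positive matrix, so $e^{B_0}Z_0$ dominates a positive multiple of the Perron eigenvector, and applying $e^{B_0(\hht_N-1)}$ (which preserves the componentwise order on $\reals_+^{d_2}$ and fixes the eigendirection up to the factor $e^{\b_0(\hht_N-1)}$) gives the lower bound \Ref{AB-PF-lower-bound}; the error term \Ref{AB-PF-approx} is then absorbed by taking $\d'$ small. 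You instead split the interval at $t_{N*}^\xi$, apply Lemma~\ref{d-lem-1} twice, and control the direction of $\xi_N\ut$ via the quantitative Perron--Frobenius decomposition $e^{B_0t}y = e^{\b_0 t}(\bv^Ty)\bu + O(e^{(\b_0-\e_*)t}|y|)$, exploiting the $\Theta(\log N)$ length of each window to kill the non-dominant modes. Both routes are sound: yours yields the slightly stronger statement that $\xi_N\ut(\hht_N)$ is asymptotically aligned with $\bu$ (your Stage~1 estimate is in fact already available in the paper as \Ref{det-proportions} in the proof of Lemma~\ref{phase-1-final}, so it could be cited rather than rederived), whereas the paper's positivity argument is more elementary and avoids invoking a spectral gap at this point. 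Two small remarks: the hypotheses of Lemma~\ref{d-lem-1} must be rechecked at the restart point $\xi_N(t_{N*}^\xi)$ (they hold for large~$N$ since the new $\e\ui,\e\ut$ are $O(N^{-5/12})$ and $O(N^{-5/12}\log N\cdot\log N)\to0$), and since $\e_N$ is only $\asymp N^{-5/12}$ rather than equal to it, the identity $(\bv^T\xi_N\ut(t_{N*}^\xi))e^{\b_0 t_0(\d',\e_N)}=\d'$ holds only up to bounded multiplicative constants; neither affects the conclusion, as the implied constants are bounded above and below.
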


\medskip
We summarize the results of this section in the following proposition, which, 
with Proposition~\ref{phase-1-conc}, completes the proof of Theorem~\ref{combined}.
Theorem~\ref{combined-2} is proved in Section~\ref{phase-3} in the appendix.

\begin{prop}\label{phase-2-conc}
Let $\xi_N$ denote the
solution to the deterministic equation starting with $\xi_N(0) = x_{N,0}$ satisfying
$|x_{N,0}\ui - x_0\ui| \le N^{-5/12}$ and $x_{N,0}\ut = N^{-1}Z_0$.
Let $\e_N \asymp N^{-5/12}$ be as defined in~\Ref{eps-N-def}, $t_0(\d,\e)$ as in~\Ref{t-delta-eps-def},
and $\t_{N*}^x$ and~$t_{N*}^\xi$ as in~\Ref{AB-tau-star-defs}.
Then there exist $\d' > 0$ and an event~$E_N$, whose complement has asymptotically negligible
probability, such that, on~$E_N \cap \{\t_{N*}^x < \infty\}$ and for all~$N$ large enough,
\[
    \sup_{0\le t\le t_0(\d',\e_N)}|x_N(\t_{N*}^x + t) - \xi_N(t_{N*}^\xi + t)| \Le k(\d')  N^{-\g},
\]
for some $\g > 0$ and $0 < k(\d') < \infty$, and that all components of
$\xi_N(t_{N*}^\xi + t_0(\d',\e_N))$ are bounded uniformly away from zero.
Note also that
\eqs
    t_{N*}^\xi + t_0(\d',\e_N) &=& \b_0^{-1}\{\tfrac7{12}\log N - \log(\bv^T Z_0) + \log \d' - \log\e_N\} \\
     &=& \b_0^{-1}\log N + O(1).
\ens
\end{prop}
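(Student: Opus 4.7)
The plan is to assemble the estimates already developed in this section, applying the strong Markov property at~$\t_{N*}^x$ and then a triangle inequality to reduce matters to Lemmas~\ref{phase-2-lem-new} and~\ref{non-degenerate} together with the deterministic comparison~\Ref{AB-close-at-end}.

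Set $E_N \DEq \hE_N \cap E_N^{(0)}$, where $\hE_N$ is the event provided by Proposition~\ref{phase-1-conc} and $E_N^{(0)}$ the event of Lemma~\ref{phase-2-lem-new}; both complements are asymptotically negligible, hence so is $E_N^c$. On $E_N\cap\{\t_{N*}^x<\infty\}$, apply the strong Markov property at~$\t_{N*}^x$ and set $\tx_N(t)\DEq x_N(\t_{N*}^x+t)$, with $\tx_N(0)=x_1\DEq x_N(\t_{N*}^x)$. Let $\txi_N$ be the deterministic solution with $\txi_N(0)=x_1$ and $\hxi_N$ the deterministic solution with $\hxi_N(0)=\xi_N(t_{N*}^\xi)$. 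Uniqueness of the deterministic flow gives $\hxi_N(\cdot)=\xi_N(t_{N*}^\xi+\cdot)$, so the triangle inequality yields
$$
   |x_N(\t_{N*}^x+t)-\xi_N(t_{N*}^\xi+t)| \Le |\tx_N(t)-\txi_N(t)|+|\txi_N(t)-\hxi_N(t)|.
$$

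For the first term, Lemma~\ref{phase-2-lem-new} applied with $\d_1$ small enough that the parameter $\chi(\d_1)$ satisfies $\chi(\d_1)<1/12$ gives a uniform bound of order $O(N^{-\g_1})$ on $[0,t_0(\d_0,\e_N)]$, with $\g_1\DEq 1/12-\chi(\d_1)>0$. For the second term, Proposition~\ref{phase-1-conc} ensures, on $\hE_N$, that $|x_1\ui-\xi_N\ui(t_{N*}^\xi)|=O(N^{-5/12}\log N)$ and $|x_1\ut-\xi_N\ut(t_{N*}^\xi)|=O(N^{-5/12-\g})$. The final statements of Lemma~\ref{d-lem-1}, applied with $\e\ui=\bk\ui N^{-5/12}\log N$ and $\e\ut=\bk\ut N^{-5/12}$, then yield~\Ref{AB-close-at-end}: a uniform bound of order $O(N^{-\g_2})$ on $[0,t_0(\d_1,\e_N)]$ for some $\g_2>0$. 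Taking $\g\DEq\min\{\g_1,\g_2\}$ and collecting prefactors into a constant $k(\d')$ proves the displayed approximation bound for any $\d'\le\d_1$, since $t_0(\d',\e_N)\le t_0(\d_1,\e_N)$.

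Finally, choose $\d'\le\d_1$ small enough that Lemma~\ref{non-degenerate} applies, so that all components of $\xi_N(t_{N*}^\xi+t_0(\d',\e_N))$ are bounded uniformly away from zero. The time identity
$$
   t_{N*}^\xi+t_0(\d',\e_N)\Eq \b_0^{-1}\{\tfrac{7}{12}\log N-\log(\bv^T Z_0)+\log\d'-\log\e_N\}\Eq \b_0^{-1}\log N+O(1)
$$
is immediate from~\Ref{t-delta-eps-def}, \Ref{AB-t-xi-def} and $\e_N\asymp N^{-5/12}$. The one subtle point in the whole proof is that a single $\d'$ must simultaneously (i) make the Gronwall exponent of Lemma~\ref{phase-2-lem-new} useful ($\chi(\d_1)<1/12$), (ii) keep the deterministic comparison~\Ref{AB-close-at-end} valid over $[0,t_0(\d',\e_N)]$, and (iii) ensure non-degeneracy via Lemma~\ref{non-degenerate}; each requirement shrinks $\d'$, but all three thresholds depend only on the functions $A$, $B$, $c$ and the constants $\r_2,\cst_i$, so a common choice exists, and everything else is routine bookkeeping.
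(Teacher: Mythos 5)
Your proposal is correct and follows essentially the same route as the paper: Section~\ref{phase-2} proves the proposition exactly by combining the strong Markov property at~$\t_{N*}^x$, the stochastic--deterministic comparison of Lemma~\ref{phase-2-lem-new} (with $\chi(\d)<1/12$), the comparison~\Ref{AB-close-at-end} of the two deterministic solutions via the final statements of Lemma~\ref{d-lem-1} and Proposition~\ref{phase-1-conc}, and the non-degeneracy Lemma~\ref{non-degenerate}, on the intersection of the corresponding good events. The only cosmetic difference is that you make the triangle-inequality decomposition and the choice of a common $\d'$ explicit, which the paper leaves implicit in its summary.
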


\acks
ADB was supported in part by Australian Research Council Grants Nos DP120102728 and DP120102398,
KH and FK by Australian Research Council Grant No~DP120102728, and
HK by the Israel Science Foundation Grant 764/13 and by the Milford Bohm Memorial Grant.
The authors wish to warmly thank the referees of an earlier version, whose contributions have
substantially improved the paper.

\appendix


\section{The approach to extinction}\label{phase-3}
\setcounter{equation}{0}
In this section, we briefly examine the situation discussed in Section~\ref{absorption},
when the equilibrium at~$x_0 = [x_0\ui,0]$ is strongly stable, with~\Ref{matrix-bnd-attract}
satisfied, and when the stochastic process~$x_N$
starts not too far from~$x_0$; in such circumstances, the second component~$x_N\ut$ of~$x_N$
goes to zero.
We first suppose that the stochastic and deterministic processes $x_{N,\d}$ and~$\xi_\d$
start with $\xi_\d(0) = x_{N,\d}(0) = x_{\d0}$, where $|x_{\d0} - x_0| \le \d$. For
$0 < \d < 1$ chosen small enough, we then show that they
remain close for a further time $\b_1^{-1}(\log\d + \tfrac5{12}\log N)$, at which point
the second coordinates are of magnitude approximately~$N^{-5/12}$ and the first coordinates
are at a similar distance from~$x_0\ui$.

Define $\e_N := N^{-5/12}$,
and note that, for $\d \le 1$, $t_1(\d,\e_N) \le t_1(1,\e_N) = k_1'\log N$ with $k_1' := 5/(12\b_1)$
where~$t_1$ is as in~\Ref{t-delta-eps-def}. Let
$x_{N,\d}$ denote the stochastic process starting at~$x_{\d0}$, chosen as above,
for $0 < \d \le \d_0$, where~$\d_0$ is as in Lemma~\ref{d-lem-2}.

\begin{lem}\label{phase-2-lem-new*}
 There exist $\d_1 > 0$ and an event~$E_N$ with $\pr[E_N^c] = O(N^{-r})$ for any~$r > 0$ such that,
on~$E_N$, for all~$\d \le \d_1$,
\eqs
    \sup_{0\le t\le t_1(\d,\e_N)}|x_{N,\d}\ui(t) - \xi_\d\ui(t)| &\le& k\ui(\d)  N^{-1/12 + \chi(\d)};\\
   \sup_{0\le t\le t_1(\d,\e_N)}\{e^{\b_1t}|x_{N,\d}\ut(t) - \xi_\d\ut(t)|\}
                 &\le& k\ut(\d)  N^{-1/12 + \chi(\d)},
\ens
for some $ k\ui(\d), k\ut(\d)$ and $\chi(\d) > 0$, where~$\lim_{\d\to0}\chi(\d) = 0$.
\end{lem}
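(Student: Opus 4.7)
The plan is to mirror the proof of Lemma~\ref{phase-2-lem-new}, replacing the growth factor $e^{\b_0 t}$ by the decay $e^{-\b_1 t}$ and correspondingly weighting the second-component discrepancy by $e^{+\b_1 u}$ rather than $e^{-\b_0 u}$. First I would set $\t_N := \inf\{t > 0\colon\,|x_{N,\d}(t) - x_0| > \r_2\}$ and, with $k_1' := 5/(12\b_1)$ chosen so that $t_1(\d,\e_N) \le k_1'\log N$, apply Lemma~\ref{MG-control} to define the event $E_N$ on which $\sup_{0\le t\le t_1(\d,\e_N)\wedge\t_N}|m_N(t)| \le \h_{N1} = O(N^{-1/2}(\log N)^{3/2})$; its complement has probability $O(N^{-r})$ for every $r>0$.

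Second, passing to the variation-of-constants forms \Ref{1}, \Ref{2}, \Ref{3a-d} and~\Ref{4-d} for both processes (which share the initial condition $x_{\d 0}$) and subtracting, I would work with
\[
    d_N\ui(t) \Def \sup_{0\le u\le t\wedge\t_N}|x_{N,\d}\ui(u) - \xi_\d\ui(u)|,\quad
    d_N\ut(t) \Def \sup_{0\le u\le t\wedge\t_N} e^{\b_1 u}|x_{N,\d}\ut(u) - \xi_\d\ut(u)|.
\]
The essential deterministic inputs are the decay bounds $|\xi_\d\ui(u) - x_0\ui| \le \hk\ui\d e^{-\k' u}$ and $|\xi_\d\ut(u)| \le \hk\ut\d e^{-\b_1 u}$ from Lemma~\ref{d-lem-2}, the operator bound $|e^{B_0(t-u)} y| \le \cst_2 e^{-\b_1(t-u)}|y|$ (the analogue of~\Ref{matrix-bnds}, valid because $-\b_1$ is the largest real part among the eigenvalues of $B_0$), and~\Ref{matrix-bnd-attract} for $e^{Ct}$.

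Third, I would run two coupled estimates. For the first component, using~\Ref{matrix-bnd-attract} and integrating $e^{-\k(t-u)}e^{-\b_1 u}$ produces a pointwise bound
\[
   d_N\ui(t) \Le k_1 d_N\ut(t) + k_2\d(d_N\ui(t) + d_N\ut(t)) + k_3 d_N\ui(t)^2 + \h_{N1}',
\]
which, for $\d$ small and under the bootstrap assumption $d_N\ui(t) \le \d_1$, simplifies to $d_N\ui(t) \Le k_4 d_N\ut(t) + k_5\h_{N1}'$. For the second component, multiplying the analogue of~\Ref{part-2-diff} through by $e^{\b_1 t}$ turns the operator decay into the factor $e^{\b_1 u}$ under the integral; combining with $|x_{N,\d}\ut(u)| \le e^{-\b_1 u}(\hk\ut\d + d_N\ut(u))$ and $\|B(\xi_\d(u)) - B_0\| \le \hk'\d e^{-\k'' u}$ for some $\k''>0$, then substituting the first-component bound, yields
\[
   d_N\ut(t) \Le k_6\d \int_0^t d_N\ut(u)\,du + k_7 e^{\b_1 t}\h_{N1}'.
\]
Gronwall delivers $d_N\ut(t) \le k_8 e^{\b_1 t}\h_{N1}' e^{k_6\d t}$, and at $t = t_1(\d,\e_N)$ the factor $e^{\b_1 t} = \d/\e_N = \d N^{5/12}$ combines with $\h_{N1}' = O(N^{-1/2}(\log N)^{5/2})$ to give order $\d N^{-1/12+o(1)}$, while $e^{k_6\d t} \le N^{k_6 \d k_1'}$ yields the claimed bound with $\chi(\d) := k_6 \d k_1' \to 0$ as $\d\to 0$.

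The main obstacle, exactly as in Lemma~\ref{phase-2-lem-new}, is the inflation of the martingale error by $e^{\b_1 t} = O(N^{5/12})$ over the interval: only the margin between $N^{-1/2}$ in $\h_{N1}$ and $N^{5/12}$ in this inflation produces the $N^{-1/12}$ rate, and one must simultaneously choose $\d$ small enough that the Gronwall exponent $k_6\d t \le \chi(\d)\log N$ is sub-dominant. A bootstrap argument via Lemma~\ref{AB-little-lemma}, run alongside the analogues of the conditions \Ref{integral-bnd-2} and \Ref{cond-1-stoch}, then ensures that these estimates remain valid and that $t \le \t_N$ throughout $[0, t_1(\d,\e_N)]$ for all $N$ sufficiently large.
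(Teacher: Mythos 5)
Your proposal is correct and follows essentially the same route as the paper's proof: the same martingale event via Lemma~\ref{MG-control} with $k_1'=5/(12\b_1)$, the same variation-of-constants differencing of \Ref{1}, \Ref{2} against \Ref{3a-d}, \Ref{4-d}, the same substitution of the first-component bound into the second-component Gronwall inequality yielding $d_N\ut(t)\le k e^{\b_1 t}\h_{N1}\exp\{k\d t\}$, and the same closing bootstrap via Lemma~\ref{AB-little-lemma}. The only cosmetic difference is that the paper weights the first-component discrepancy by $e^{\k'u}$ for $\k'<\min\{\k,\b_1\}$ (thereby also recording its exponential decay), whereas you work with the unweighted supremum; since the lemma's first assertion is unweighted and the extra terms this generates are dominated by $e^{\b_1 t}\h_{N1}$, this changes nothing of substance.
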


\begin{proof}
The argument is very like that for Lemma~\ref{phase-2-lem-new}, modified to take into
account that the trajectories converge towards the deterministic equilibrium at $[x_0,0]$.
As before, define the (random) time
\eqs
 \t_N &:=& \inf\{t > 0\colon\, |x_{N,\d}(t)- x_0| > \r_2\},
\ens
and let~$E_N$ denote the event
\eq\label{good-event*}
    E_N \Def \Bigl\{\max_{J\in\jj}\sup_{0\le t\le t_1(\d_0,\e_N)\wedge\t_N}|N^{-1}P^J(NG_N^J(t)) - G_N^J(t)|
                                      \le 2N^{-1/2}\sqrt{k_1'} (\log N)^{3/2} \Bigr\}.
\en
Then, as in Lemma~\ref{MG-control}, $\pr[E_N^c] = O(N^{-r})$,
for any $r > 0$, and, on the event~$E_N$,
\eq\label{mN-bnd-2*}
   \sup_{0\le t\le t_1(\d_0,\e_N)\wedge\t_N}|m_N(t)| \Le  \h_{N1} \Def \h_{N1}(k_1').
\en

We now use equations \Ref{1}, \Ref{2}, \Ref{3a-d} and~\Ref{4-d} much as in the proof of
Lemma~\ref{phase-2-lem-new}, but with $x_{N,\d}$ and $\xi_\d$ in place
of $\tx_N$ and~$\xi$.
Taking the difference of \Ref{1} and~\Ref{3a-d}, and recalling~\Ref{matrix-bnd-attract}, we find that, on~$E_N$,
\eqa
  \lefteqn{|x_{N,\d}\ui(t) - \xi_\d\ui(t)| \Le
           \int_0^t |e^{C(t-u)} \{A(x_{N,\d}(u))x_{N,\d}\ut(u) - A(\xi_\d(u))\xi_\d\ut(u)\}|\,du}
                  \non \\
   \mbox{} &&\phantom{XXXXXX}\qquad + \int_0^t |e^{C(t-u)}\{\tc(x_{N,\d}\ui(u)) - \tc(\xi_\d\ui(u))\}| \,du
               + \h_{N1}'\phantom{XXXX}\label{1-component-diff*} \\
   &\le& \|A\|_{\r_2}\cst_1 \int_0^t e^{-\k(t-u)}|x_{N,\d}\ut(u)-\xi_\d\ut(u)|\,du \non \\
   &&\mbox{}+\cst_1 \int_0^t e^{-\k(t-u)} |\xi_\d\ut(u)| \|DA\|_{\r_2} |x_{N,\d}(u) - \xi_\d(u)|\,du + \h_{N1}'
                  \label{expanded-bnd} \\
    &&\ \mbox{} + \kkkc \cst_1 \int_0^t e^{-\k(t-u)} |x_{N,\d}\ui(u) - \xi_\d\ui(u)|
              \{|x_{N,\d}\ui(u) - \xi_\d\ui(u)| + |\xi_\d\ui(u) - x_0\ui|\}\,du ,
   \non
\ena
for $0\le t\le t_1(\d_0,\e_N)\wedge\t_N$, where $\h_{N1}' := (1 + \cst_1 \|C\|/\k)\h_{N1}$.
Write $d_N\ui(t) := \sup_{0\le u\le t\wedge\t_N} e^{\k'u}|x_{N,\d}\ui(u) - \xi_\d\ui(u)|$
for some $\k' < \min\{\b_1,\k\}$,
and then write $d_N\ut(t) := \sup_{0\le u\le t\wedge\t_N}e^{\b_1 u}|x_{N,\d}\ut(u) - \xi_\d\ut(u)|$;
set $\d\ut(t) := \sup_{0\le u\le t}e^{\b_1 u}|\xi_\d\ut(u)|$. Then,
for any $\d \le \d_0$ and on~$E_N$, it follows that
\eqa
    \lefteqn{e^{\k't}|x_{N,\d}\ui(t) - \xi_\d\ui(t)|}\non\\
     &\le&  \cst_1 e^{\k't} \int_0^t e^{-\k(t-u)} e^{-\b_1 u}d_N\ut(u)
         \|A\|_{\r_2} 
             \,du \non\\
              &&\mbox{}    + \cst_1 \|DA\|_{\r_2}e^{\k't}
         \int_0^t e^{-\k(t-u)}  e^{-\b_1u} \d\ut(u)\{e^{-\k'u}d_N\ui(u) + e^{-\b_1u}d_N\ut(u)\}\,du \non \\
     &&\mbox{}\quad + \cst_1 {\kkkc } e^{\k't} d_N\ui(t) \int_0^t e^{-\k(t-u)-\k'u}
                   \{ e^{-\k'u}d_N\ui(u) + |\xi_\d\ui(u) - x_0\ui| \}\,du + e^{\k't}\h_{N1}'
            \non \\[1ex]
  &\le& \cst_1 \b_1^{-1} d_N\ut(t)\|A\|_{\r_2}
          \non\\[1ex]
   && +\ \cst_1 k\ut\d \b_1^{-1}  \|DA\|_{\r_2} (d_N\ui(t) + d_N\ut(t))
     + \tfrac14 d_N\ui(t)  + e^{\k't}\h_{N1}',
     \label{1-component-diff-2*}
\ena
for $t\le t_1(\d,\e_N)\wedge\t_N$ such that
\eq\label{integral-cond-3}
   \cst_1 \kkkc  \int_0^t  d_N\ui(u)\,du \Le 1/8 \quad\mbox{and}\quad
     \cst_1 \kkkc  \int_0^t |\xi_\d\ui(u) - x_0\ui|\,du \Le 1/8;
\en
we have also used Lemma~\ref{d-lem-2} to bound~$\d\ut(t)$.
Note that Lemma~\ref{d-lem-2} gives the bound
$\cst_1 \kkkc  \hk_1 (\k')^{-1}\d$ for the second of the integrals in~\Ref{integral-cond-3}, and
this is smaller than~$1/8$ for all~$\d\le \d'$ chosen small enough.
Now choose $\d_1 \le \min\{\d_0,\d'\}$ such that $\d_1 \le \r_2/2$ and also
\hbox{$\cst_1 k\ut \d_1 \b_1^{-1}\|DA\|_{\r_2} \le 1/4$,}
and consider $t \le t_N(\d_1)$ such that also
\eq\label{cond-1-stoch*}
     \max\{d_N\ui(t),d_N\ut(t)\} \Le \d_1,
\en
implying $t \le \t_N$ also.  Then it follows on~$E_N$ that, for such~$t$,
\eq\label{9*}
   d_N\ui(t) \Le 2\cst_1 \b_1^{-1} d_N\ut(t)\{\|A\|_{\r_2} + k\ut\d_1\|DA\|_{\r_2} \}
      + 2e^{\k't}\h_{N1}'.
\en

We now, as for~\Ref{part-2-diff}, take the difference of \Ref{2} and~\Ref{4-d}, from which it follows on~$E_N$
that, for~$t$ as above and for $\d \le \d_1$,
\eqa
  \lefteqn{|x_{N,\d}\ut(t) - \xi_\d\ut(t)|} \non\\
    &&\Le   \int_0^t |e^{B_0(t-u)}(B(x_{N,\d}(u)) - B(\xi_\d(u)))x_{N,\d}\ut(u)|\,du \non\\
   &&\mbox{}\quad\ + \int_0^t |e^{B_0(t-u)}(B(\xi_\d(u)) - B_0)(x_{N,\d}\ut(u) - \xi_\d\ut(u))|\,du \non\\
    &&\quad\ \quad\mbox{}\  + \Blm\tm_{N}\ut(t) + B_0\int_0^t e^{B_0(t-u)} \tm_N\ui(u)\,du \Brm.
                   \label{part-2-diff*}
\ena
Defining $\cst_2'$ so as to satisfy $|e^{B_0 t}x| \le \cst_2'e^{-\b_1t}|x|$ for all $t\ge0$,
$x\in \reals^{d_2}$,  and writing $\h_{N1}^* := (1 + \cst_2'\|B_0\|/\b_1)\h_{N1}$,
\Ref{part-2-diff*} together with Lemma~\ref{d-lem-2} gives
\eqa
 d_N\ut(t) &\le& \cst_2'\int_0^t d_N\ut(u) \|B(x_{N,\d}(u)) - B(\xi_\d(u))\| \,du  \non \\ &&\mbox{}\quad
    + \cst_2'\int_0^t \d\ut(u) \|B(x_{N,\d}(u)) - B(\xi_\d(u))\| \,du \non\\
  &&\mbox{} \qquad + \cst_2'\int_0^t d_N\ut(u)\|B(\xi_\d(u)) - B_0\|\,du +   e^{\b_1t}\h_{N1}^*
             \label{2-component-diff*} \\
  &\le& k'_{2}\d_1\Blb \int_0^t d_N\ut(u)\,du
      +  \int_0^t d_N\ui(u)\,du \Brb + e^{\b_1t}\h_{N1}^*, \label{previous-1*}
\ena
this last from~\Ref{cond-1-stoch*} and for a suitable constant~$k'_{2}$. From~\Ref{9*}, we have
\[
   \int_0^t d_N\ui(u)\,du \Le k'_{3}\int_0^t d_N\ut(u)\,du
            + 2te^{\k't}  \h_{N1}',
\]
and, combining this with~\Ref{previous-1*}, we obtain
\eq\label{dN-1-este*}
   d_N\ut(t) \Le k'_{4}\d_1 \int_0^t d_N\ut(u)\,du
              + e^{\b_1t}\h_{N1}^* + k'_{5}t e^{\k't}\h_{N1}',
\en
for constants $k'_{4}$, $k'_{5}$. Since $\k' < \b_1$, Gronwall's inequality now yields
\eq\label{GR}
    d_N\ut(t) \Le k'_{6} e^{\b_1t}\h_{N1} \exp\{k'_{4}t\d_1 \},
\en
for suitable~$k'_{6}$,
and, for~$t = t_1(\d_1,\e_N)$, the right hand side can be made to be of order $O(N^{-1/12 + \chi})$,
for any~$\chi > 0$, by choosing~$\d_1 = \d_1(\chi)$ small enough.  In particular,
choosing~$t = t_1(\d_1(\chi),\e_N)$, we have
\eq\label{GR-2}
   \sup_{0\le u\le t}e^{\b_1u}|x_{N,\d}\ut(u) - \xi_\d\ut(u)| \Eq d_N\ut(t)
       \Eq O(N^{-1/12 + \chi}),
\en
on the event~$E_N$.  In addition, from~\Ref{9*}, it follows on~$E_N$ that
\eq\label{AB-GR-3}
    \sup_{0\le u\le t}|x_{N,\d}\ui(u) - \xi_\d\ui(u)|
     \Le k'_{7}\{d_N\ut(t) + e^{\k't}\h_{N1}'\}  \Eq O(N^{-1/12 + \chi}),
\en
again because $\k' < \b_1$, and in view of~\Ref{GR-2}.
Now the assumed conditions \Ref{integral-cond-3} and~\Ref{cond-1-stoch*} bound functions
having jumps no larger than $\DDD = \d N^{-7/12}\max_{J\in\jj}|J|$, and, comparing them
with the bounds derived from \Ref{GR-2} and~\Ref{AB-GR-3}, we can invoke
Lemma~\ref{AB-little-lemma}
to show that, on~$E_N$ and for all~$N$ sufficiently large,
the latter bounds are satisfied for all $t \le t_1(\d_1(\chi),\e_N)$,
provided that $\chi < 1/12$.
\end{proof}

\medskip
In view of Lemma~\ref{phase-2-lem-new*}, and provided that~$\d$ is chosen small enough,
the process~$x_{N,\d}$, started from a position~$x_{\d0}$ within
distance~$\d$ of its equilibrium $[x_0,0]$, remains asymptotically close to the
deterministic curve, starting from the same initial conditions, until time~$t = t_1(\d,\e_N)$,
at which point $|x_{N,\d}\ut(t)| = O(N^{-5/12})$.  Instead, we might think of comparing~$x_{N,\d}$ with
a deterministic solution~$\txi_\d$ with a slightly different initial condition,
as, for instance, the terminal value of the deterministic solution used in Theorem~\ref{combined}.
So suppose that~$\txi_\d(0)$ satisfies $|\xi_\d(0) - \txi_\d(0)| = \e\uf$,
where $|\e\uf| = O(N^{-\hhh_1})$ for some $\hhh_1 > 0$; the choice $\hhh_1 = 1/24$
would correspond to the initial condition from Theorem~\ref{combined}.
Then we can use the last part of Lemma~\ref{d-lem-2} to bound the
extra error in approximating~$x_{N,\d}$ by~$\txi_\d$.  We are interested in taking
$\h \asymp \e_N = N^{-5/12}$ in the lemma, so as to reach time~$t_1(\d,\e_N)$;
for this choice of~$\h$, the condition $\e\uf\h^{-\th} \le \KKK$ in Lemma~\ref{d-lem-2} can
be realized for all~$N$ large enough, by taking~$\th := 6\hhh_1/5$. This gives
\eqa\label{extra-error}
   &&|\xi_\d\ui(t_1(\d,\e_N)) - \txi_\d\ui(t_1(\d,\e_N))| \Eq O(N^{-\hhh_1/2});\phantom{XXXXXX}\non\\
   &&e^{\b_1t_N(\d)}|\xi_\d\ut(t_1(\d,\e_N)) - \txi_\d\ut(t_1(\d,\e_N))| \Eq O(N^{-\hhh_1/2}).
\ena
Thus the overall error in the approximation may be a little
worse than that proved in Lemma~\ref{phase-2-lem-new*}, but is of the same general form.

Thereafter, $Nx_N\ut$ can be well approximated
by the branching process~$Z$, with asymptotically small error in total variation,
until (if it does so) it has made $kN^{7/12}$ further transitions, for any fixed $k > 0$.
This can be proved by an argument almost identical to that in Section~\ref{phase-1},
and we do not repeat it.
By choosing~$k$ large enough, the probability that the branching process does not become
extinct before it has made $kN^{7/12}$ further transitions is asymptotically small.
The branching process is itself very easy to describe, since it is the sum of $cN^{7/12}$
independent paths, each of which represents the (a.s.\ finite) family tree descended
from one of the $N\bone^Tx_N\ut(t_N(\d))$ initial individuals.  These family trees have
distributions that are identical for ancestors of the same type, and each has finite mean
and variance of the number of individuals ever born.  Hence classical limit theory can
be used to approximate the remaining behaviour very well.

Perhaps the most interesting feature of this final phase is the time until ultimate extinction.
For the Markov branching process, it is shown in Heinzmann~(2009) that its distribution has a Gumbel
limit, after centring, with an error that is small as the number of initial individuals tends to infinity.
Here, this is the case, since the initial number of individuals tends to infinity like~$N^{7/12}$.
Putting these facts together yields Theorem~\ref{combined-2} of Section~\ref{absorption}.



\section{Proofs of lemmas from Section~\ref{phase-1}}\label{3-extra}
\setcounter{equation}{0}
In view of Lemma~\ref{phase-1-lem}, probability calculations
can be conducted using~$Z$, with asymptotically negligible error.

\medskip
\nin{\bf Proof of Lemma~\ref{stopping-at-level}.}\
For each $d_1 < i \le d$, we have
\eq\label{phi-def}
  \f_i(\th) \Def \ex\Bigl\{ e^{-\th(\bv^T Z_1 - \bv_i)} \Giv Z_0 = \e^i \Bigr\} \ <\ 1,
\en
for all~$\th$ sufficiently small, where~$\e^i$ denotes the $i$-th coordinate vector.
Choose $\th_0 > 0$ such that~\Ref{phi-def} is satisfied for all~$i$, and write
\[
   \th_1 \Def - \log\Bigl\{ \max_{d_1 < i \le d} \f_i(\th_0) \Bigr\} \ >\ 0.
\]
Then it is immediate that, for $n\ge1$,
\[
    \ex\Bigl\{ e^{-\th_0(\bv^T Z_n - \bv^T Z_0)}I_{[\bone^T Z_n > 0]} \Giv Z_0 \Bigr\} \Le e^{-\th_1 n}
\]
and thus, for $k_0 := (2\th_0/\th_1)$ and $n = k_0 N^{1-\a}$,
\eqs
  \phantom{XX}\lefteqn{\pr[\{\n_{N,\a}^Z < \infty\} \setminus \{\n_{N,\a}^Z \le n\} \giv Z_0]}\phantom{XXXX} \\
   &=& \pr[\{\bv^T Z_n - \bv^T Z_0 < N^{1-\a} \} \cap \{\bone^T Z_n > 0\} \giv Z_0 \}] \\
          &\le& \exp\{\th_0 N^{1-\a} - \th_1 n\} \Eq \exp\{-\th_0 N^{1-\a}\}. \hskip2.9cm\proofbox
\ens

\bigskip\nin{\bf Proof of Lemma~\ref{phase-1-final}.}\
We first note two inequalities satisfied by the Markov branching process~$Z$.
They are continuous time analogues of results which,
for a square integrable multitype branching process in discrete time, follow
directly from Harris~(1951, (3.11)), and we prove them below, 
using the martingale representation of a branching process given 
in Klebaner ((1994), (1.4)).  The first delimits the rate of convergence
of the martingale~$W^\bv(t) := \bv^T Z(t) e^{-\b_0t}$ to its limit~$W$, and the second
the rate of convergence of $Z(t) e^{-\b_0t}$ to its limit~$W\bu$.  Defining the events
\eqs
   E_1(t,a) &:=& \Bigl\{ \sup_{u\ge t}|\bv^T Z(u) e^{-\b_0u} - W| \le ae^{-\b_0t/2} \Bigr\};\\
   E_2(t,K,\ch) &:=& \Bigl\{ \sup_{u\ge t}|Z(u) e^{-\b_0u} - W\bu| \le  Ke^{-\ch t} \Bigr\},
\ens
there exist constants~$k,K,\ch_1$ and~$\ch_2$ such that, for any $a,t > 0$,
\eq\label{branching-claim}
  \pr[\{E_1(t,a)\}^c] \Le  ka^{-2} \quad\mbox{and}\quad \pr[\{E_2(t,K,\ch_1)\}^c\cap \{W > 0\}] \Le e^{-\ch_2 t}.
\en

Fix $\e > 0$, to be chosen below, and, with $t_{N,\a,\e} := \b_0^{-1}(1-\a-\e)\log N$, define
\[
    E_{N,\a}^\e \Def E_{N}^{\e,0} \cap E_1(t_{N,\a,\e},\half N^\e),
\]
where $E_{N}^{\e,0} := \{\half N^{-\e} < W < \half N^\e\}$. Then, on~$E_{N,\a}^\e$ and for $t\le t_{N,\a,\e}$, 
we have
\[
    \bv^T Z(t) \ <\ N^{1-\a-\e}( \half N^\e + \half N^\e ) \ <\ N^{1-\a}.
\]
This implies that $\t_{N,\a}^Z > t_{N,\a,\e}$, and hence, from the definitions of $\t_{N,\a}^Z$ and~$E_{N,\a}^\e$,
that
\eq\label{AB-tau-approx}
  |N^{1-\a} e^{-\b_0\t_{N,\a}^Z} W^{-1} - 1| \Le   N^\e W^{-1}\,N^{-(1-\a-\e)/2}
       \Le N^{-(1-\a)/4},
\en
on~$E_{N,\a}^\e$, for all~$N$ large enough, with the final inequality true if~$\e$ is chosen to be
less than $(1-\a)/10$.  By opening it up and taking logarithms, it then follows by using $\log(1+x)\le x$, that 
\eq\label{AB-tau-approx-2}
   |\t_{N,\a}^Z - \b_0^{-1}\{(1-\a)\log N - \log W\}| \Le 2\b_0^{-1}N^{-(1-\a)/4} 
\en
on~$E_{N,\a}^\e$, for all~$N$ large enough, and hence also that
\eq\label{AB-tau-bnd}
   e^{\b_0\t_{N,\a}^Z} \Le 2W^{-1}N^{1-\a}.
\en

Now, on~$E_{N,\a}^{\e,2} := E_{N,\a}^\e \cap E_2(t_{N,\a,\e},K,\ch_1)$, and from the definition of~$t_{N,\a,\e}$,
we also have
\[
    |e^{-\b_0\t_{N,\a}^Z} Z(\t_{N,\a}^Z) - W\bu| \le KN^{-\g_1},
\]
where $\g_1 := (1-\a-\e)\ch_1/\b_0$, implying from~\Ref{AB-tau-bnd} and the definition of~$E_{N,\a}^\e$
that 
\begin{equation}\label{gamma1}
    |N^{-1}Z(\t_{N,\a}^Z) - N^{-1} W e^{\b_0\t_{N,\a}^Z}\bu| \Le KN^{-1-\g_1} e^{\b_0\t_{N,\a}^Z}
           \Le 2Ke N^{-\g_1+\e}\,N^{-\a}.
\end{equation}
Then, from first inequality in~\Ref{AB-tau-approx}, we also have on~$E_{N,\a}^\e$
\[
    |N^{-1} W e^{\b_0\t_{N,\a}^Z} - N^{-\a}| \Le 3N^{-\a}\,N^{-(1-\a)/4},
\]
for all~$N$ large enough.  Thus, on~$E_{N,\a}^{\e,2}$ and for all~$N$ large
enough, we have
\eq\label{AB-Z-approx}
  |N^{-1}Z(\t_{N,\a}^Z) - N^{-\a}\bu| \Le K' N^{-\a-\g}
\en
for some $K'<\infty$ and for some $\g = \g^\e > 0$, provided that $0 < \e < \min\{1,9\ch_1/\b_0\}(1-\a)/10$, 
to make $-\gamma_1+\e-\alpha<0$ in \eqref{gamma1}.

We now note that
\eqs
    \lefteqn{(E_{N,\a}^{\e,2})^c \cap \{\t_{N,\a}^Z < \infty\}}\\
 && \ \subset\ (E_1(t_{N,\a,\e},\half N^\e))^c
       \cup (E_2(t_{N,\a,\e},K,\ch_1))^c
          \cup  (\{\t_{N,\a}^Z < \infty\} \setminus E_{N}^{\e,0}).
\ens
The probabilities of the first two events in the union are asymptotically negligible,
from~\Ref{branching-claim}, and so are both $\pr[0 < W < \half N^{-\e}]$
and $\pr[W >  \half N^\e]$, since~$W$ is a proper random variable.  Then,
by elementary branching process theory (Athreya \&~Ney 1972, Chapter V.3, Theorem~2(i)),
\[
   \pr[W = 0 \giv \t_{N,\a}^Z < \infty] \Le \{\max_i q_i\}^{N^{1-\a}},
\]
where $q_i := \pr_i[\t^Z(0) < \infty] < 1$ is the extinction probability starting from
a single individual of type~$i$, and so $\lim_{N\to\infty}\pr[W = 0 \giv \t_{N,\a}^Z < \infty] = 0$.
Hence
\eqs
    \lefteqn{\{\t_{N,\a}^Z < \infty\} \setminus E_{N}^{\e,0} } \\
  &&\Eq \{0 < W < \half N^{-\e}\} \cup \{W >  \half N^\e\} \cup \bigl\{\{W=0\}\cap\{\t_{N,\a}^Z < \infty\}\bigr\}
\ens
also has asymptotically negligible probability.
Finally, from Lemmas \ref{phase-1-lem} and~\ref{stopping-at-level}, we can couple $Z$ and~$x_N\ut$ in
such a way that
\[
   \lim_{N\to\infty}\pr[\{\t_{N,\a}^Z \neq \t_{N,\a}^x\} \cup \{Z(\t_{N,\a}^Z) \neq Nx_N\ut(\t_{N,\a}^x)\}]
          \Eq 0.
\]
Hence we conclude that there is a $\g>0$ such that, for
\eq\label{x2-bnd}
   \hE_N \Def \{ |x_N\ut(\t_{N,\a}^x) - N^{-\a}\bu| \le N^{-\a-\g} \} \cap E_n(k),
\en
the event $\hE_N^c \cap \{\t_{N,\a}^x < \infty\}$ has asymptotically negligible probability, and that,
on this event,
\eq\label{AB-tau-N-alpha-asymp}
   \t_{N,\a}^x \Eq \b_0^{-1}\{(1-\a)\log N - \log W\} + O(N^{-(1-\a)/4}) ,
\en
thanks to~\Ref{AB-tau-approx-2}.
For the components~$x_N\ui(\t_{N,\a}^x)$,
it then follows from Lemma~\ref{close-start} that, on~$\hE_N$ and if $\a > 1/2$,
\eq\label{x1-bnd}
    |x_N\ui(\t_{N,\a}^x) - x_0\ui| \Eq O(N^{-\a}\log N).
\en

For the corresponding deterministic values, we use Lemma~\ref{d-lem-1}.
We take $\e\ut = N^{-1}|Z_0|$ and $\e\ui = O(N^{-\a})$, so that the
conditions are satisfied for all~$N$ sufficiently large;  noting that
$e^{\b_0 t_{N,\a}^\xi} = O(N^{1-\a})$, we conclude that
\eq
   |\xi_N\ui(t_{N,\a}^\xi) - x_{N,0}\ui| \Eq O(N^{-\a}); \quad
           |\xi_N\ut(t_{N,\a}^\xi) - e^{B_0 t_{N,\a}^\xi} x_{N,0}\ut| \Eq O(N^{-2\a}).
      \label{z2-bnd}
\en
On the other hand, by the Perron--Frobenius theorem,
\[
    |e^{B_0t}x_{N,0}\ut - e^{\b_0 t} \bv^T x_{N,0}\ut \bu| \Eq O(|x_{N,0}\ut|e^{(\b_0 - \g)t}),
\]
for some~$\g > 0$,  uniformly in $t > 0$, implying in turn from~\Ref{z2-bnd} that
\eqa
   |\xi_N\ut(t_{N,\a}^\xi) - e^{\b_0 t_{N,\a}^\xi} \bv^T x_{N,0}\ut \bu| &=& |\xi_N\ut(t_{N,\a}^\xi) - N^{-\a} \bu|
                   \phantom{XXXXXXXXXX}\non\\
        &=& O(N^{-\a}e^{-\g t_{N,\a}^\xi}) \Eq O(N^{-\a-\g'}), \label{det-proportions}
\ena
for some $\g' > 0$.  From this, and in view of~\Ref{x2-bnd}, it follows that, on the event~$\hE_N$,
\[
   |x_N\ut(\t_{N,\a}^x) - \xi_N\ut(t_{N,\a}^\xi)| \ =\ O(N^{-\a-\g}),
\]
for some $\g > 0$. 
The analogous bound from \Ref{x1-bnd} and~\Ref{z2-bnd}, completing the proof, is
\[
   \phantom{XXXXXXXXX}|x_N\ui(\t_{N,\a}^x) - \xi_N\ui(t_{N,\a}^\xi)| \ =\ O(N^{-\a}\log N).
       \phantom{XXXXXXXX}\proofbox
\]

\medskip
\nin{\bf Proof of~\Ref{branching-claim}.}\
Let $Z(t)=(Z^1(t),\ldots,Z^d(t))$ be a multitype Markov branching process, in which
an individual of type~$i$ has an exponentially distributed life span with mean $1/a_i$.
When it dies, it gives rise to a random number of offspring; $M$ denotes the net mean
replacement matrix, and $V\uii$ the second moment matrix of the net offspring of a type~$i$
individual, whose elements are assumed to be finite.  The matrix $AM$, where
$A := {\rm diag\,}(a_1,\ldots,a_d)$, is assumed to be irreducible, having
a simple largest eigenvalue~$\b_0 > 0$, with associated left and right eigenvectors
$\bu^T$ and~$\bv$, normalized so that $\bu^T\bone = \bu^T\bv = 1$. We write $\CCC  := \b_0 I - AM$.
Note that, from Seneta~(2006, Theorem~2.7), there are constants
$0 < \d,c_1 < \infty$ such that, for any $x \in \reals^d$,
\eq\label{C-props}
    |x^T(e^{-t\CCC } - \bv\bu^T)| \le c_1|x|e^{-\d t};
\en
$\d$ can be taken to be any value smaller than the spectral gap of the matrix~$AM$.

Define $W(t) := Z(t) e^{-\b_0 t}$.  Then it follows as in Klebaner~(1994, (1.4)) that
\eq\label{W-martingale}
      N(t)^T \Def W(t)^T + \int_0^t W^T(x) \CCC \, dx
\en
is a square integrable vector valued martingale, with predictable quadratic variation
$$
  \langle N,N \rangle_t \Eq \int_0^t e^{-2\b_0 s} \sum_{i=1}^d  a_i Z(s)^i V\uii \,ds
$$
and second moments
\eqs
  \ex\{(N(t)-N(0))(N(t)-N(0))^T\}
    &=& \int_0^t e^{-2\b_0 s} \sum_{i=1}^d  a_i \ex Z(s)^i V\uii \,ds \\
    &=& \int_0^t e^{-\b_0 s} \sum_{i=1}^d   a_i (Z_0^T e^{-\CCC s})_i V\uii \,ds\, .
\ens
In particular, we have
   $\ex |N_j(\infty) - N_j(s)|^2 \le \s_j^2 e^{-\b_0 s}$, where
$$
   \s_j^2 \Def \b_0^{-1}|Z_0|(|\bv|+c_1)\sum_{i=1}^d a_i V_{jj}\uii,
$$
and hence 
\[
    \ex\{(\bv^T N(\infty) - \bv^T N(s))^2\} \Le e^{-\b_0 s}\Bigl\{\sum_{i=1}^d \s_i v_i \Bigr\}^2
             \Le e^{-\b_0 s}|\bv|^2 \sum_{i=1}^d \s_i^2.
\]
Thus, by Doob's inequality, and writing 
$N^*(s) := \sup_{t \ge s}|N(t) - N(s)|$ and $\s^2 := \sum_{j=1}^d \s_j^2$, we have
\eq\label{N-probs}
   \pr[N^*(s) > a\s] \Le d e^{-\b_0 s}/a^2,
\en
for any $a > 0$, 
and, since $\bv^T N(t) = \bv^T W(t) = e^{-\b_0 t} \bv^T Z(t)$ because $\CCC \bv = 0$, 
\eq\label{AB-first-claim}
  \pr\Bigl[\sup_{u \ge t}|\bv^T Z(u) e^{-\b_0 u} - W| > a e^{-\b_0 t/2}\Bigr] \Le
          \frac{2\s^2|\bv|^2}{a^2},
\en
proving the first part of~\Ref{branching-claim}.

Equation~\Ref{W-martingale} can be explicitly solved for~$W$ to give
\eqa\label{W-solution}
      W(t)^T &=& W(s)^T e^{-(t-s)\CCC } + (N(t) - N(s))^T e^{-(t-s)\CCC } \non\\
             &&\quad\mbox{}  + \int_s^t (N(t) - N(x))^T e^{-(t-x)\CCC } \CCC \, dx\, ,
\ena
for any $0\le s\le t$.
In particular, taking $s=0$, letting $t=\infty$ and using~\Ref{C-props},
the first two terms on the right hand side of~\Ref{W-solution} yield
\[
   (Z_0^T\bv)\bu^T + \{(N(\infty)-N(0))^T \bv\}\bu^T.
\]
For the last term, with $s=0$, $\bu^T \CCC  = 0$ leaves
$$
    \int_0^t (N(t) - N(x))^T (e^{-(t-x)\CCC } - \bv\bu^T)\CCC \,dx,
$$
and, again using~\Ref{C-props}, we can bound the integral by
\[
    \lim_{t\to\infty} \int_0^t |N(t) - N(x)| e^{-\d(t-x)}\,dx
          \Eq \lim_{t\to\infty} \int_0^t |N(t) - N(t-y)| e^{-\d y}\,dy \Eq 0 \quad \mbox{a.s.}
\]
by dominated convergence, because~$N^*(0) < \infty$ a.s.  Furthermore, for each~$y$, we have
$\lim_{t\to\infty}|N(t) - N(t-y)|=0$ a.s. Thus
\[
     W(\infty)^T \Eq (Z_0^T\bv)\bu^T + \{(N(\infty)-N(0))^T \bv\}\bu^T,
\]
identifying $W := \lim_{t\to\infty}\bv^T Z(t) e^{-\b_0 t} = \bv^T N(\infty)$
and $W(\infty) = W\bu$.

For general $0 \le s \le t$, again
since $\bu^T \CCC  = 0$, the last term in~\Ref{W-solution} is bounded by
$2c_1\d^{-1} N^*(s)$, and the second is bounded by $(|\bv|+c_1) N^*(s)$. For the first term,
we have
$$
        |W(s)^T e^{-(t-s)\CCC } - (W(s)^T\bv)\bu^T| \Le  c_1 e^{-\d(t-s)}|W(s)|.
$$
Hence, from~\Ref{W-solution} applied twice, we have
$$
   |W(x) - W(\infty)|  \Le  c_1 e^{-\d(x-s)}|W(s)| + 2c_2 N^*(s),\quad  x \ge s,
$$
with $c_2 := 2c_1\d^{-1} + |\bv|+c_1$; and then, applying~\Ref{W-solution} once more,
\[
   |W(s)| \Le |Z_0|(|\bv| + c_1) + c_2 N^*(0).
\]
Combining these bounds, and taking $s = t(1-\f)$ with $\f := \b_0/(\b_0+2\d)$, it follows that
\[
    \sup_{x \ge t}|W(x) - W\bu| \Le c_1 e^{-\d\f t}\{|Z_0|(|\bv| + c_1) + c_2 N^*(0)\}
       + 2c_2 N^*(t(1-\f)),
\]
for any~$t>0$.  Now, from~\Ref{N-probs}, it follows that
\[
   \pr[N^*(0)e^{-\d\f t} > e^{-\d\f t/2}\s^2] \Le d e^{-\d\f t}
\]
and that
\[
    \pr[N^*(t(1-\f)) > e^{-\d\f t/2}\s^2] \Le  d e^{-\d\f t}
\]
also.  This implies the second part of~\Ref{branching-claim}, with $\chi_2 = 2\chi_1 = \d\b_0/(\b_0+2\d)$
and for suitable choice of~$K$, for~$\d>0$ smaller than the spectral gap of the matrix~$B_0$.
\hfil\proofbox

\section{Proof of Lemma~\ref{non-degenerate}}\label{4-extra}
\setcounter{equation}{0}
From Lemma~\ref{d-lem-1}, with $\e\ui = N^{-5/12}$ and $\e\ut = N^{-1}|Z_0|$,
we have
\eqa
  \lefteqn{ |\xi_N\ut(\hht_N) - e^{B_0\hht_N}N^{-1}Z_0| } \non \\   
  &&\Le  k\uh N^{-1}|Z_0|e^{\b_0\hht_N}
   \{ N^{-5/12}\log N + N^{-1}|Z_0| e^{\b_0\hht_N}\},\label{AB-PF-approx}
\ena
provided that~$\d'$ is small enough that
\[
    \log(\d'/\bv^T Z_0) \Le \log(\d_0/|Z_0|),
\]
where~$\d_0$ is as in Lemma~\ref{d-lem-1}, so that $\hht_N \le t_0(\d_0,\e\ut)$.
Since, for each $u > 0$, $e^{B_0u}$ is a positive matrix, there is a
constant~$c = c(Z_0)$ such that $e^{B_0}Z_0 \ge c\bv$, and hence such that
\eq\label{AB-PF-lower-bound}
   e^{B_0\hht_N}N^{-1}Z_0 \ \ge\ c(Z_0) N^{-1}e^{\b_0(\hht_N-1)}\bv.
\en
Then, since
\eq\label{AB-exponential-bound}
     e^{\b_0\hht_N} \Eq \frac{N\d'}{\bv^T Z_0}, 
\en
it follows that 
\[
   e^{B_0\hht_N}N^{-1}Z_0 \ \ge\ c(Z_0) e^{-\b_0}\frac{\d'}{\bv^TZ_0}\bv
\]
has all its components bounded away from zero, uniformly in~$N$.
If we now show that the right hand side of~\Ref{AB-PF-approx} is less
than half of  $c(Z_0) N^{-1}e^{\b_0(\hht_N-1)}\min_{d_1 < i\le d}\bv_i$, it will follow
from~\Ref{AB-PF-lower-bound} that the same is true of~$\xi_N\ut(\hht_N)$.
This in turn is true, provided that
\[
    \half c(Z_0) e^{-\b_0}\min_{d_1 < i\le d}\bv_i \ \ge\
      k\uh |Z_0| \{ N^{-5/12}\log N + N^{-1}|Z_0| e^{\b_0\hht_N}\}.
\]
The first factor in braces is as small as required for all~$N$ large enough, and the second,
from~\Ref{AB-exponential-bound}, is 
\[
    \frac{|Z_0|\d'}{\bv^T Z_0} \Le c'\d',
\]
for some constant~$c'$, and so can be made smaller than
\[
    \frac{c(Z_0) e^{-\b_0}\min_{d_1 < i\le d}\bv_i}{4k\uh |Z_0|}
\]
by appropriate choice of~$0 < \d' \le \d_1$.  Hence~$\xi_N\ut(\hht_N)$ has all its components
bounded uniformly away from zero, for all~$N$ sufficiently large.

It remains to consider the components of~$\xi_N\ui(\hht_N)$.  Here, Lemma~\ref{d-lem-1},
again with $\e\ui = N^{-5/12}$ and $\e\ut = N^{-1}|Z_0|$,
shows that
\[
    |\xi_N\ui(\hht_N) - x_0\ui| \Le k\ui\{N^{-5/12} + N^{-1}|Z_0| e^{\b_0\hht_N}\},
\]
and the argument above ensures that the right hand side can be made uniformly
smaller than $\half\min_{1 \le i \le d_1}x_{0i}\ui$  for all~$N$ sufficiently large,
by appropriate choice of~$0 < \d' \le \d_1$.
\hfill\proofbox

\section{Proofs of lemmas from Section~\ref{deterministic-section}.}\label{d-lemmas}
\setcounter{equation}{0}
In the proofs, we often make use of the following elementary lemma.

\begin{lem}\label{AB-little-lemma}
 Let
$(A_i(t),\,t\ge0)$, $1\le i\le i_0$,
be non-increasing processes such that, for all~$i$, $A_i(0+) \le \DDD$ and $A_i(t+) - A_i(t-) \le \DDD$
for all $t > 0$.  Suppose that, for some $a,b>0$ with $a -  b > \DDD$ and for all $0 \le t \le t_0$,
\[
     \bigcap_{i=1}^{i_0} \{A_i(t) \le a\} \ \subset\ \bigcap_{i=1}^{i_0} \{A_i(t) \le b\}\,.
\]
Then $\{A_i(t_0) \le a\}$ for all $1\le i\le i_0$.
\end{lem}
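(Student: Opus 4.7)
The plan is a textbook first-exit argument, run pathwise. A preliminary remark on the hypothesis is in order: the processes are stated as ``non-increasing,'' but the jump bound $A_i(t+)-A_i(t-) \le \DDD$ and the applications in the paper (to running integrals such as $\int_0^t d_N\ui(u,m)\,du$ in Lemma~\ref{close-start} and to running suprema in Lemmas~\ref{phase-2-lem-new} and~\ref{phase-2-lem-new*}) make it clear that \emph{non-decreasing} is meant; I will proceed under that reading. Under it, the conclusion says that if no configuration of all $A_i \le a$ is consistent with any $A_i$ strictly above $b$, then no $A_i$ can in fact cross $a$ before time~$t_0$.

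I would introduce the first exit time
\[
   \tau \ :=\ \inf\Bigl\{t \ge 0 \colon\, \max_{1 \le i \le i_0} A_i(t) > a\Bigr\},
\]
with the convention $\inf\emptyset = +\infty$, and aim to show $\tau > t_0$. First I would verify $\tau > 0$: since $A_i(0+) \le \DDD$ and the gap condition $a - b > \DDD$ forces $a > \DDD$ (using $b \ge 0$, valid in every application), no $A_i$ can exceed $a$ immediately after~$0$. Next, for every $0 \le t < \tau$ the definition of~$\tau$ gives $\max_i A_i(t) \le a$, so the hypothesis of the lemma upgrades this pathwise to $\max_i A_i(t) \le b$. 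Each $A_i$ being monotone has left and right limits everywhere; taking the left limit at~$\tau$ yields $\max_i A_i(\tau-) \le b$, and the jump bound then forces
\[
   \max_i A_i(\tau+) \ \le\ \max_i A_i(\tau-) + \DDD \ \le\ b + \DDD \ <\ a.
\]

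To close the argument, I would suppose for contradiction that $\tau \le t_0$. Then by the definition of the infimum there is a sequence $t_n \downarrow \tau$ with $\max_i A_i(t_n) > a$; but by monotonicity and the existence of right limits $\max_i A_i(t_n) \to \max_i A_i(\tau+) < a$, which is a contradiction. Hence $\tau > t_0$, and in particular $A_i(t_0) \le a$ for all $1 \le i \le i_0$, as required. There is no real obstacle here: the argument is a routine monotone-process exit calculation, and the gap condition $a - b > \DDD$ is included precisely to absorb one potential jump at the putative exit time.
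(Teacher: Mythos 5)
Your proof is correct and follows essentially the same route as the paper's: a first-exit time $\t$, the observation that the hypothesis forces $A_i(\t-)\le b$, and the jump bound $a-b>\DDD$ to rule out crossing $a$ at $\t$. Your preliminary remark that ``non-increasing'' must be read as non-decreasing is also right, and your explicit treatment of $\t>0$ and of the right limit at $\t$ just makes the paper's one-line contradiction slightly more verbose.
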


\begin{proof}
 Let $\t := \inf\{t\ge0\colon\,A_i(t) > a \ \mbox{for some}\ 1\le i\le i_0\}$.  Then, if $\t < \infty$,
there is some~$i'$, $1\le i'\le i_0$, for which $A_{i'}(\t+) \ge a$; and $A_{i}(\t-) \le a$
for all $1\le i\le i_0$ also.   Now, if $\t \le t_0$, the latter observation implies that
$A_{i}(\t-) \le b$ for all $1\le i\le i_0$, and in particular that $A_{i'}(\t-) \le b$.
Thus $A_{i'}(\t+) - A_{i'}(\t-) \ge a-b > \DDD$, a contradiction.  Hence $\t > t_0$,
proving the lemma.
\end{proof}

\bigskip
\nin{\bf Proof of Lemma~\ref{d-lem-1}.}\ 
Define
\eqs
           \D\ui(t) &:=& \sup_{0\le u\le t}|\xi\ui(u) - x_0\ui|; \qquad
           \D\ut(t) \Def \sup_{0\le u\le t} e^{-\b_0 u}|\xi\ut(u)|; \\
    \D\uh(t) &:=& \sup_{0\le u\le t} e^{-\b_0 u}|\xi\ut(u) - e^{B_0u}\xi\ut(0)|.
\ens
Then it follows directly from~\Ref{4-d} that, if~$t$ is such that
\eq\label{cond-1-d}
    \max\{\D\ui(t), e^{\b_0t}\D\ut(t)\} \Le \r_2/2,
\en
then
\eqs
   \lefteqn{e^{-\b_0 t}|\xi\ut(t)|}\\
    &\le& \cst_2\Blb\e\ut + \int_0^t e^{-\b_0u} \|DB\|_{\r_2}
     \{|\xi\ut(u)|^2 + |\xi\ui(u) - x_0\ui|\,|\xi\ut(u)|\}\,du \Brb,
\ens
from which it follows that
\eq\label{inequality-1-d}
  \D\ut(t) \Le \cst_2\Bigl\{\e\ut + \b_0^{-1}\|DB\|_{\r_2}e^{\b_0t}\{\D\ut(t)\}^2
             + \|DB\|_{\r_2} \D\ut(t)\int_0^t \D\ui(u)\,du\Bigr\}.
\en
Then, from~\Ref{3a-d},
\eqs
|\xi\ui(t) - x_0\ui| 
   &\le& \cst_1\Blb\e\ui
     + \int_0^t \{ \|A\|_{\r_2}|\xi\ut(u)| + \kkkc  |\xi\ui(u) - x_0\ui|^2\}\,du
          \Brb,
\ens
giving
\eq\label{inequality-2-d}
  \D\ui(t) \Le \cst_1\{\e\ui + \b_0^{-1}\|A\|_{\r_2} \D\ut(t) e^{\b_0t}
          + \kkkc  \D\ui(t) \int_0^t \D\ui(u)\,du\}.
\en
Hence, if~$t$ is also such that
\eq\label{cond-2-d}
     \cst_1 \kkkc  \int_0^t \D\ui(u)\,du \Le 1/2, 
\en
it follows from \Ref{inequality-1-d} and~\Ref{inequality-2-d} that
\eq\label{shorter-1-d}
   \D\ui(t) \Le 2\cst_1\{\e\ui + \b_0^{-1}\|A\|_{\r_2} \D\ut(t) e^{\b_0t}\}.
\en

Now, from~\Ref{shorter-1-d},
\[
   \cst_2 \|DB\|_{\r_2} \int_0^t \D\ui(u)\,du \Le 2\cst_2 \|DB\|_{\r_2} \cst_1\Blb \e\ui t
    + \frac{\|A\|_{\r_2}\D\ut(t)}{\b_0^2}\,e^{\b_0t} \Brb \Le \frac13,
\]
if
\eq\label{new-cond-1-d}
  \e\ui t \Le \frac1{12\cst_2\cst_1 \|DB\|_{\r_2}}\quad\mbox{and}\quad
     \D\ut(t)e^{\b_0t} \Le \frac{\b_0^2}{12\cst_2\cst_1 \|DB\|_{\r_2}\|A\|_{\r_2}},
\en
so that, if also
\eq\label{new-cond-2-d}
    \D\ut(t)e^{\b_0t} \Le \frac{\b_0}{6\cst_2 \|DB\|_{\r_2}},
\en
it follows from~\Ref{inequality-1-d} that
\eq\label{shorter-2-d}
     \D\ut(t) \Le 2\cst_2\e\ut  .
\en

Thus, for  $t \le t_0(\d_0,\e\ut)$, assuming the bounds \Ref{cond-1-d}, \Ref{cond-2-d},
\Ref{new-cond-1-d} and~\Ref{new-cond-2-d} implies that the bounds \Ref{shorter-1-d} and~\Ref{shorter-2-d}
also hold.  We now show that these, if~$0 < \d_0 \le 1$ is chosen to be suitably small, in turn imply
that strictly smaller bounds are valid in the right hands
sides of \Ref{cond-1-d}, \Ref{cond-2-d}, \Ref{new-cond-1-d} and~\Ref{new-cond-2-d}.
For $t \le t_0(\d_0,\e\ut)$,  \Ref{shorter-2-d} implies that $\D\ut(u)e^{\b_0u} \le 2\cst_2\d_0$
for all $0\le u\le t$, so that \Ref{new-cond-1-d} and~\Ref{new-cond-2-d} are satisfied
with the right hand sides halved if
\eq\label{new-cross-2}
   2\cst_2\d_0 \Le \frac12\min\Blb \frac{\b_0}{6\cst_2\|DB\|_{\r_2}},
           \frac{\b_0^2}{12\cst_2\cst_1 \|DB\|_{\r_2}\|A\|_{\r_2}}\Brb ,
\en
because, by Assumption~\Ref{e-e-bnd},
\eq\label{new-cross-3}
    \e\ui \log (1/\e\ut) \Le \frac{\b_0}{24\cst_2\cst_1 \|DB\|_{\r_2}}.
\en
The same is true for \Ref{cond-1-d} and~\Ref{cond-2-d} if  $2\cst_2\d_0 \le \r_2/4$ and
\eqa
  &&2\cst_1\{\e\ui + 2\cst_2\d_0 \b_0^{-1}\|A\|_{\r_2}\} \Le \r_2/4; \label{new-cross-1}\\
  &&2\cst_1^2\kkkc \b_0^{-1}\{\e\ui \log(1/\e\ut) + 2\cst_2\d_0 \b_0^{-1}\|A\|_{\r_2}\} \Le 1/4.
              \label{new-cross-1.5}
\ena
The contributions from $\e\ui$ in~\Ref{new-cross-1} and from $\e\ui \log(1/\e\ut)$ in~\Ref{new-cross-1.5}
are less that half their right hand sides, because of Assumptions \Ref{e-i-bnd} and~\Ref{e-e-bnd},
and the remaining terms can be made small enough by choice of~$\d_0$.  Then, since we can
take $\DDD=0$ in Lemma~\ref{AB-little-lemma}, it follows that \Ref{shorter-1-d} and~\Ref{shorter-2-d}
hold for all $0\le t\le t_0(\d_0,\e\ut)$, if~$\d_0 > 0$ is chosen small enough.

For the third statement, we use~\Ref{4-d} to give
\eqs
  \lefteqn{e^{-\b_0t} |\xi\ut(t) - e^{B_0t}\xi\ut(0)| } \\
    &\le& \cst_2\|DB\|_{\r_2} \int_0^u
         \{\D\ui(u)  + e^{\b_0u}\D\ut(u)\} \D\ut(u)\,du.
\ens
Then, for $t \le t_0(\d_0,\e\ut)$, it follows from \Ref{shorter-1-d} and~\Ref{shorter-2-d},
for suitable constants~$k_i$, 
that
\eqa
   \D\uh(t) &\le& k_8 \int_0^u
         \{\e\ui +  e^{\b_0u}\e\ut \} \e\ut\,du  
   \Le k_9\{\e\ui\e\ut t + (\e\ut)^2 e^{\b_0t}\},\phantom{XXX}
         \label{D3-d}
\ena
establishing the third statement of the lemma.

The proof of the remaining inequalities 
is somewhat similar. Define
\[
   \tD\ui(t) \Def \sup_{0\le u\le t}|\xi\ui(u) - \txi\ui(u)|;\quad\!\! 
   \tD\ut(t) \Def \sup_{0\le u\le t}\{e^{-\b_0 u}|\xi\ut(u) - \txi\ut(u)|\}.
\]
First, from~\Ref{3a-d},
\eqa
  |\xi\ui(t) - \txi\ui(t)|
     &\le&  \int_0^t |e^{C(t-u)} \{A(\xi(u))\xi\ut(u) - A(\txi(u))\txi\ut(u)\}|\,du
                 \non \\
   &&\qquad\mbox{}   + \int_0^t |e^{C(t-u)}\{\tc(\xi\ui(u)) - \tc(\txi\ui(u))\} |\,du \non\\
   &&\qquad\qquad\mbox{}            + |e^{Ct}(\xi\ui(0) - \txi\ui(0))|.\label{1-compt-d}
\ena
The final term of~\Ref{1-compt-d} is no bigger than $\cst_1 k\uv\e\ut\log(1/\e\ut)$, and, for~$t$ such that
\eq\label{delta-conds-d-1}
    \cst_1 \kkkc  \int_0^t \tD\ui(u)\,du \Le 1/8\quad\mbox{and}\quad
    \cst_1 \kkkc  \int_0^t \D_1(u)\,du \Le 1/8, 
\en
the middle term of~\Ref{1-compt-d} is bounded by $\tfrac14\tD\ui(t)$.
Note that, from \Ref{shorter-1-d} and~\Ref{shorter-2-d},
\[
    \cst_1 \kkkc  \int_0^t \D_1(u)\,du \Le 2\kkkc \cst_1^2\b_0^{-1}\e\ui\log(1/\e\ut)
                + 4\kkkc \cst_1^2\cst_2\|A\|_{\r_2}\d
\]
for $t \le t_0(\d,\e\ut)$ for any $\d \le \d_0$, and the right hand side is less than~$1/8$
if~$\d$ is chosen small enough, by Assumption~\Ref{e-e-bnd}.
The first term in~\Ref{1-compt-d} we can bound by
\eq\label{tilde-first}
   \cst_1 \int_0^t  \Bigl\{\|DA\|_{\r_2}\{\tD\ui(u) + e^{\b_0u}\tD\ut(u)\}|\xi\ut(u)|
        + \|A\|_{\r_2} e^{\b_0u}\tD\ut(u)\Bigr\}\,du.
\en
Here, the final element is at most $\cst_1  \|A\|_{\r_2} e^{\b_0t}\tD\ut(t)/\b_0$;
since $|\xi\ut(u)| \le 2\cst_2\e\ut e^{\b_0u}$ from~\Ref{shorter-2-d}, the remaining elements
are bounded by
\[
   \frac{\cst_1\|DA\|_{\r_2}}{\b_0}\,\{e^{\b_0t}\tD\ut(t) + \tD\ui(t)\}\,2\cst_2\d_0
\]
if also $t \le t_0(\d_0,\e\ut)$.  Choosing
\[
   \d_0' \Def \min\Bigl\{ \d_0, \frac{\b_0}{8\cst_2\cst_1\|DA\|_{\r_2}}\Bigr\} ,
\]
it thus follows from~\Ref{1-compt-d} that, for~$t$ such that~\Ref{delta-conds-d-1} is satisfied,
\eq
  \tD\ui(t) \Le \frac{2\cst_1}{\b_0} e^{\b_0 t}\tD\ut(t)\{\|A\|_{\r_2} + 2\cst_2\d_0'\|DA\|_{\r_2} \}
                +  2\cst_1  k\uv\e\ut\log(1/\e\ut). \label{9-det-*-d}
\en

For the second components, \Ref{4-d} gives
\eqa
  \lefteqn{|\xi\ut(t) - \txi\ut(t)|
    \Le   \int_0^t |e^{B_0(t-u)}(B(\xi(u)) - B(\txi(u)))\txi\ut(u)|\,du} \phantom{XXXX} \non\\
   &&\mbox{}\qquad\qquad + \int_0^t |e^{B_0(t-u)}(B(\xi(u)) - B_0)(\xi\ut(u) - \txi\ut(u))|\,du \non\\
    &&\quad\ \qquad\qquad\mbox{}\  +  |e^{B_0 t}(\xi\ut(0) - \txi\ut(0))|.
\ena
If
\eq\label{delta-conds-d}
    \tD\ut(t) \Le k^{(4)}\e\ut,
\en
$|\txi\ut(u)| \le k_{10}\e\ut e^{\b_0 u}$ for some~$k_{10}$,
and so, for any $\d \le \d_0'$ and all $t \le t_0(\d,\e\ut)$ and using \Ref{shorter-1-d}
and~\Ref{shorter-2-d}, we have
\eqs
   \tD\ut(t) &\le&   k_{11}\Bigl\{(\d + \e\ui)\int_0^t \tD\ut(u)\,du
      + \e\ut \int_0^t \tD\ui(u)\,du\Bigr\}  + \cst_2\e\uh.
\ens
It then follows from~\Ref{9-det-*-d} for $t \le t_0(\d,\e\ut)$ satisfying~\Ref{delta-conds-d} that
\[
   \tD\ut(t) \Le   k_{12}\Blb(\d + \e\ui)\int_0^t \tD\ut(u)\,du  + (\e\ut)^2 \{\log(1/\e\ut)\}^2 \Brb
               + \cst_2\e\uh,
\]
and hence, by Gronwall's inequality and the restrictions on $\e\uh$, that
\eq\label{9-2-det-d}
    \tD\ut(t) \Le k_{13} (\e\ut)^{1+\g}\exp\{k_{12}t(\d + \e\ui)\}.
\en
Now, taking any $\g' < \g$ and then choosing~$\d_1(\g') := \min\{ \d_0',\g'\b_0/k_{12}\}$,
this gives a bound for~$\tD\ut(t_0(\d_1(\g'),\e\ut))$ of order
$O((\e\ut)^{1+\g-\g'})$, because of~\Ref{e-e-bnd}.
Taking $\g' = \g/2$ and $\d_1 := \d_1(\g/2)$, we now conclude that, for all $t \le t_0(\d_1,\e\ut)$,
\eq\label{AB-2.1-last-1}
   \sup_{0\le u\le t}\{e^{-\b_0 u}|\xi\ut(u) - \txi\ut(u)|\}
           \Le k_{14}(\e\ut)^{1 + \g/2},
\en
and that, using~\Ref{9-det-*-d},
\eq\label{AB-2.1-last-2}
\sup_{0\le u\le t_0(\d,\e\ut)}|\xi\ui(u) - \txi\ui(u)|  \Le k_{15}(\e\ut)^{\g/2}.
\en
Comparing these bounds with those in \Ref{delta-conds-d-1} and~\Ref{delta-conds-d}
and applying Lemma~\ref{AB-little-lemma} with $\DDD = 0$ now shows that
\Ref{AB-2.1-last-1} and~\Ref{AB-2.1-last-2}
are satisfied for all $t \le t_0(\d_1,\e\ut)$ and~$\e\ut \le \min\{k^{(8)},\d_1\}$,
for some~$k^{(8)}$ small enough.
\hfil\proofbox\par\smallskip\par

\medskip

\bigskip
\nin{\bf Proof of Lemma~\ref{d-lem-2}.}\ 
Define
\eqs
           \D_\d\ui(t) &:=& \sup_{0\le u\le t} e^{\k'u}|\xi_\d\ui(u) - x_0\ui|; \qquad
           \D_\d\ut(t) \Def \sup_{0\le u\le t} e^{\b_1 u}|\xi_\d\ut(u)|; \\
    \D_\d\uh(t) &:=& \sup_{0\le u\le t} e^{\b_1 u}|\xi_\d\ut(u) - e^{B_0u}x_{\d0}|.
\ens
Then it follows directly from~\Ref{4-d} that, if~$t$ is such that
\eq\label{cond-1*-d}
    \max\{\D_\d\ui(t), e^{-\b_1t}\D_\d\ut(t)\} \Le \r_2/2,
\en
then
\eqs
   e^{\b_1 t}|\xi_\d\ut(t)| &\le& \cst_2'\Blb \d + \int_0^t e^{\b_1u} \|DB\|_{\r_2}
     \{|\xi_\d\ut(u)|^2 + |\xi_\d\ui(u) - x_0\ui|\,|\xi_\d\ut(u)|\}\,du \Brb,
\ens
where $\cst_2'$ is as defined following~\Ref{part-2-diff*}. It thus follows that
\eq\label{inequality-1*-d}
  \D_\d\ut(t) \Le \cst_2'\Bigl(\d + \|DB\|_{\r_2}\D_\d\ut(t)\{\b_1^{-1}\D_\d\ut(t) + (\k')^{-1}\D_\d\ui(t)\}
     \Bigr).
\en
Then, from~\Ref{3a-d},
\eqs
   |\xi_\d\ui(t) - x_0\ui| &\le& \cst_1\Bigl\{ |x_{\d0}\ui - x_0\ui|e^{-\k t} \\
    &&\mbox{}\ + \int_0^t e^{-\k(t-u)}\{ \|A\|_{\r_2}|\xi_\d\ut(u)| + \kkkc  |\xi_\d\ui(u) - x_0\ui|^2\}\,du \Bigr\},
\ens
and, because $be^{-bt}\int_0^t e^{au}\,du \le 1$ for all $b \ge 0$ and $-\infty < a \le b$,
this gives
\eq\label{inequality-2*-d}
  \D_\d\ui(t) \Le \cst_1\bigl\{ \d + (\k-\k')^{-1}\|A\|_{\r_2} \D_\d\ut(t)
          + (\k- \k')^{-1}\kkkc  (\D_\d\ui(t))^2 \bigr\}.
\en
Hence, if~$t$ is also such that
\eq\label{cond-2*-d}
     \D_\d\ui(t) \Le \min\Blb \frac{\k-\k'}{2\cst_1 \kkkc },\, \frac{\k'}{4\cst_2'\|DB\|_{\r_2}} \Brb;
          \quad \D_\d\ut(t) \le \frac{\b_1}{4\cst_2'\|DB\|_{\r_2}},
\en
it follows from \Ref{inequality-1*-d} and~\Ref{inequality-2*-d} that
\eqa
     \D_\d\ut(t) &\le& 2\cst_2'\d ;          \label{shorter-2*-d} \\
    \D_\d\ui(t) &\le& 2\cst_1\{\d + (\k-\k')^{-1}\|A\|_{\r_2} \D_\d\ut(t)\} \non\\
                      &\le& 2\cst_1\d\{1 + 2\cst_2'(\k-\k')^{-1}\|A\|_{\r_2}\}.
                       \label{shorter-1*-d}
\ena
Comparing these bounds with \Ref{cond-1*-d} and~\Ref{cond-2*-d}
and invoking Lemma~\ref{AB-little-lemma} with $\DDD = 0$ shows that \Ref{shorter-2*-d}
and~\Ref{shorter-1*-d} hold for all~$t$,
if~$\d$ is chosen sufficiently small, proving the first two statements of the lemma.
For the third, bound the difference $|\xi_\d\ut(u) - e^{B_0u}x_{\d0}\ut|$ using~\Ref{4-d}.

For the final statements, we define
\[
   \tD_\d\ui(t) \Def \sup_{0\le u\le t}e^{\k'u}|\xi_\d\ui(u) - \txi_\d\ui(u)|;\quad 
   \tD_\d\ut(t) \Def \sup_{0\le u\le t}\{e^{\b_1 u}|\xi_\d\ut(u) - \txi_\d\ut(u)|\},
\]
and argue much as for Lemma~\ref{phase-2-lem-new*}, with $x_{N,\d}$ replaced by~$\txi_\d$.
%
To start with, using the counterpart of~\Ref{expanded-bnd} with $\h_{N1}'$ replaced by $e^{-\k t}\e\uf$,
we obtain, in analogy to~\Ref{9*},
\eq\label{tdelta-1}
    \tD_\d\ui(t) \Le k'_{8}\{\tD_\d\ut(t) + \e\uf e^{(\k'-\k)t}\},
\en
for any $\d \le \d_1$, where~$\d_1$ is as chosen in the proof of Lemma~\ref{phase-2-lem-new*},
and for $t$ such that
\eq\label{conds-5}
    \max\{\tD_\d\ui(t), \tD_\d\ut(t)\} \Le \d_1.
\en
Then we use the analogue of~\Ref{part-2-diff*}, with the term in moduli replaced by~$e^{-\b_1t}\e\uf$,
and~\Ref{shorter-2*-d} to obtain, in analogy to~\Ref{previous-1*},
\[
    \tD_\d\ut(t) \Le k'_{9}\d \int_0^t (\tD_\d\ut(u) + \tD_\d\ui(u))\,du  + \e\uf,
\]
for $\d \le \d_1$. These are combined, as for \Ref{dN-1-este*} and~\Ref{GR}, to yield
\eq\label{tdelta-2}
   \tD_\d\ut(t) \Le k'_{10}\e\uf \exp\{k'_{11}t\d \},
\en
and this yields $\tD_\d\ut(t_1(\d,\h)) \Le k'_{12}\e\uf\h^{-\th}$, if
$\d = \d(\th) := \min\{ \d_1, \b_1\th/k'_{11}\}$.   The corresponding bound
\eq\label{tdelta-1a}
    \tD_\d\ui(t_1(\d,\h)) \Le k'_{13}\e\uf\h^{-\th}
\en
now follows from~\Ref{tdelta-1},
and these imply that the bound in~\Ref{conds-5}
can be replaced by~$\d_1/2$ for $0 \le t \le t_1(\d(\th),\h)$, provided that
\eq\label{eps-restn}
     \e\uf\h^{-\th} \Le \KKK \Def \d_1/2\max\{k'_{12},k'_{13}\}.
\en
Invoking Lemma~\ref{AB-little-lemma} with $\DDD = 0$ completes the proof of Lemma~\ref{d-lem-2}.
\hfil\proofbox\par\smallskip\par

\section{Complements}\label{complements}
\setcounter{equation}{0}

\medskip
\nin{\bf Breakdown of the branching approximation.}

\medskip\nin
To illustrate that one cannot take an exponent~$\a \le 1/3$ in Lemma~\ref{phase-1-lem},
consider the stochastic logistic growth model~$X_N$ given by the transition rates
\[
    X \ \to\ X + 1  \quad\mbox{at rate}\ X(1-X/N),\qquad X \in \{0,1,\ldots,N\},
\]
with deterministic equations $\dot \xi = \xi(1-\xi)$ and unstable equilibrium $\bar x = 0$.
The corresponding Markov branching process~$Z$ has transition rates
\[
   X \ \to\ X + 1  \quad\mbox{at rate}\ X,\qquad X \in \Z_+.
\]
The likelihood ratio of the distribution of~$X_N$ with respect to that of~$Z$ at a path
segment of~$m$ jumps, starting from~$1$, with waiting times~$T_i$ in states $1\le i \le m$,
is given by
\[
    R_m(T_1,\ldots,T_m) \Def \prod_{i=1}^m e^{i/N}(1-i/N)\exp\{N^{-1}(i^2 T_i - i)\}.
\]
The product $\prod_{i=1}^m e^{i/N}(1-i/N)$ converges as $m\to\infty$ to a limit smaller than~$1$.
Then, under the distribution $\bP^Z$ of~$Z$, $T_i \sim i^{-1}E_i$, $i\ge1$, where
$(E_i,\,i\ge1)$ are independent standard
exponential random variables. It thus follows from Lyapounov's central limit theorem that, for any $a > 0$ and
$m = m(N) = \lfloor aN^{2/3}\rfloor$,
\[
    N^{-1}\sum_{i=1}^{m(N)} N^{-1}(i^2 T_i - i) \ \to_d\ \nn(0,a^3/3)
\]
under $\bP^Z$, as $N\to\infty$.  Hence
\[
    -\ex^Z \min\{0, (R_{m(N)}(T_1,\ldots,T_m) - 1)\}\ \to\ c_a \ >\ 0
\]
as $N \to \infty$, so that, from~\Ref{AB-dtv-def},
for path segments of length at least $aN^{2/3}$, the total variation
distance between the distributions of $X_N$ and~$Z$ is not asymptotically negligible.
\hfil\proofbox

\newpage
\nin{\bf There is no universal exponent~$\g$ possible in~\Ref{deterministic-approximation-final}.}

\medskip\nin
We consider an example with $d_1=0$ and $d_2=2$, having
$$
      g^{(1,0)}(x) \Def (1-\h)x_1 + \h x_2 \quad\mbox{and}\quad g^{(0,1)}(x) \Def \h x_1 + (1-\h)x_2,
$$
and initial state $x_{N,0} = x_{N,0}\ut = N^{-1}(1,1)^T$; we take any $0 < \h < 1/2$.  This yields
the deterministic solution $\xi_N(t) = N^{-1}e^t (1,1)^T$.  Then
$$
    B_0^T \Eq \Bl \begin{array}{cc} 1-\h & \h \\ \h & 1-\h \end{array} \Br,
$$
its eigenvalues
are $\b_0 = 1$ and $\b' = 1-2\h$, and the left and right eigenvectors associated with
the eigenvalue~$1$ are $\bu^T = \half(1,1)$ and $\bv = (1,1)^T$.
The process~$Nx_N\ut$ coincides with the branching process~$Z$, which has two square
integrable martingales
\[
    W^\bv(t) \Def (Z_1(t) + Z_2(t))e^{-t} \quad\mbox{and}\quad W'(t) \Def (Z_1(t) - Z_2(t))e^{-t(1-2\h)},
\]
with a.s.\ limits $W$ and~$W'$ respectively; note that $W>0$ a.s., and that $W'$ is non-degenerate,
and has a distribution symmetric about zero.  Thus, recalling~\Ref{AB-tau-star-defs},
asymptotically as $N\to\infty$,
\eqs
    \lefteqn{x_{N1}(\t_{N*}^x + (5/12)\log N) - x_{N2}(\t_{N*}^x + (5/12)\log N) } \\
 &&\sim\    N^{-1}(N/W)^{1-2\h}\,W'(\log (N/W)) \ \sim\ N^{-2\h} \,\{W'/W^{1-2\h}\}\,,
\ens
whereas
\[
    \xi_{N1}(\t_{N*}^\xi + (5/12)\log N) - \xi_{N2}(t_{N*}^\xi + (5/12)\log N) \Eq 0.
\]
Thus, in~\Ref{deterministic-approximation-final}, we would have to have $\g \le 2\h$,
and since we can take any $0 < \h < 1/2$ in the example, there is no universal choice
of~$\g$ possible.

\bigskip\bigskip


\begin{thebibliography}{99}
\bibitem{AthreyaNey}
{\sc K.\ B.\ Athreya \& P.\ E.\ Ney} (1972).
{\it Branching processes.\/}
Springer-Verlag, New York.

\bibitem{Ba75}
{\sc A.\ D.\ Barbour} (1975).
The duration of the closed stochastic epidemic.
{\it Biometrika\/} {\bf 62}, 477--482.

\bibitem{Ba80}
{\sc A.\ D.\ Barbour} (1980).
Density dependent Markov population processes.
In: Biological growth and spread, Eds W.\ J\"ager, H.\ Rost \& P.\ Tautu,
{\it Lecture Notes in Biomathematics\/} {\bf 38}, 36--49. Springer, Berlin.

\bibitem{BP12}
{\sc A.\ D.\ Barbour} \& {\sc P.\ K.\ Pollett} (2012).
Total variation approximation for quasi-equilibrium distributions, II.
{\it Stoch.\ Procs\ Applics\/} {\bf 122}, 3740--3756.

\bibitem{Bartlett49}
{\sc M.\ S.\ Bartlett} (1949).
Some evolutionary stochastic processes.
{\it J.\ Roy.\ Statist.\ Soc.\/} {\bf 11}, 211--229.


\bibitem{Harris51}
{\sc T.\ E.\ Harris} (1951).
Some mathematical models for branching processes.
{\it Proceedings  2nd Berkeley Symp.\ Math.\ Statist.\ Probab.\/},
305--328:
University of California Press, Berkeley.

\bibitem{Heinzmann09}
{\sc D.\ Heinzmann} (2009).
Extinction times in multitype Markov branching processes.
{\it J.~Appl.\ Probab.\/} {\bf 46},  296--307.

\bibitem{Kendall56}
{\sc D.\ G.\ Kendall} (1956).
Deterministic and stochastic epidemics in closed populations.
{\it Proc.\ Third Berkeley Symp.\ Math.\ Statist.\ Prob.\/}
{\bf 4}, 149--165.

\bibitem{Klebaner}
{\sc F.\ C.\ Klebaner} (1994). 
Asymptotic behaviour of Markov population processes with asymptotically linear rate of
change. 
{\it J.\ Appl.\ Probab.\/}~{\bf  31}, 614--625.
  
\bibitem{Sagitov}
{\sc F.\ C.\ Klebaner S.\ Sagitov, V.\ A.\ Vatutin, P.\ Haccou} \& {\sc P.\ Jagers} (2011).
Stochasticity in the adaptive dynamics of evolution: the bare bones.
{\it J.\ Biol.\ Dyn.\/} {\bf 5}, 147--162.



\bibitem{k70}
{\sc T.\ G.\ Kurtz} (1970).
Solutions of ordinary differential equations as limits of pure jump Markov processes.
{\it J.\ Appl.\ Probab.\/}~{\bf  7}, 49--58.

\bibitem{k71}
{\sc T.\ G.\ Kurtz} (1971).
Limit theorems for sequences of jump Markov processes approximating ordinary
differential processes.
{\it J.\ Appl.\ Probab.\/}~{\bf  8}, 344--356.



\bibitem{k77}
{\sc T.\ G.\ Kurtz} (1977).
Strong approximation theorems for density dependent Markov chains.
{\it Stoch.\ Procs Appl.\/} {\bf 6},  223--240.



\bibitem{Seneta}
{\sc E.\ Seneta} (2006).
{\it Non-negative matrices and Markov chains\/}.
Springer Series in Statistics, Springer, New York.

\bibitem{Thorisson}
{\sc H.~Thorisson} (2000).
{\it Coupling, stationarity, and regeneration.\/}
Springer\---Verlag, New York.

\bibitem{Whittle55}
{\sc P.\ Whittle} (1955).
The outcome of a stochastic epidemic --- A note on Bailey's paper.
{\it Biometrika\/} {\bf 42}, 116--122.

\end{thebibliography}
\end{document}